\theoremstyle{definition} \newtheorem{definition}{Definition}[section]
\theoremstyle{definition} \newtheorem{remark}[definition]{Remark}
\theoremstyle{plain} \newtheorem{lemma}[definition]{Lemma}
\theoremstyle{plain} \newtheorem{proposition}[definition]{Proposition}
\theoremstyle{plain} \newtheorem{theorem}[definition]{Theorem}
\theoremstyle{plain} \newtheorem{corollary}[definition]{Corollary}
\theoremstyle{definition} 
\theoremstyle{definition} \newtheorem{assumption}[definition]{Assumption}
\DeclareMathOperator{\BV}{BV}
\DeclareMathOperator{\diag}{diag}
\DeclareMathOperator{\dist}{dist}
\DeclareMathOperator{\supp}{supp}
\DeclareMathOperator{\Lip}{Lip}
\DeclareMathOperator{\Graph}{Graph}
\newcommand{\R}{\mathbb{R}}
\newcommand{\Q}{\mathbb{Q}}
\newcommand{\N}{\mathbb{N}}
\newcommand{\Z}{\mathbb{Z}}
\newcommand{\C}{\mathbb{C}}
\newcommand{\T}{\mathbb{T}}
\newcommand{\TV}{\text{\rm Tot.Var.}}
\newcommand*{\comme}[1]{\textcolor{cyan}{#1}}
\newcommand{\Id}{\mathrm{id}}
\newcommand{\ind}{1\!\!\mathrm{I}}
\newcommand{\M}{\mathcal{M}}
\renewcommand{\L}{\mathscr L}
\newcommand{\jump}{\mathrm{jump}}
\newcommand{\cont}{\mathrm{cont}}
\renewcommand{\L}{\mathcal L}
\renewcommand{\H}{\mathcal H}
\numberwithin{equation}{section}
\def\Xint#1{\mathchoice
	{\XXint\displaystyle\textstyle{#1}}%
	{\XXint\textstyle\scriptstyle{#1}}%
	{\XXint\scriptstyle\scriptscriptstyle{#1}}%
	{\XXint\scriptscriptstyle\scriptscriptstyle{#1}}%
	\!\int}
\def\XXint#1#2#3{{\setbox0=\hbox{$#1{#2#3}{\int}$ }
		\vcenter{\hbox{$#2#3$ }}\kern-.6\wd0}}
\def\dashint{\Xint-}
\title{Properties of mixing BV vector fields}
\author{Stefano Bianchini, Martina Zizza}
\date{\today}
\begin{document}

\maketitle

\begin{abstract}
   
   \noindent We consider the density properties of divergence-free vector fields $b \in L^1([0,1],\BV([0,1]^2))$ which are ergodic/weakly mixing/strongly mixing: this means that their Regular Lagrangian Flow $X_t$ is an ergodic/weakly mixing/strongly mixing measure preserving map when evaluated at $t=1$. \\
    Our main result is that there exists a $G_\delta$-set $\mathcal U \subset L^1_{t,x}([0,1]^3)$ made of divergence free vector fields such that
    \begin{enumerate}
    \item the map $\Phi$ associating $b$ with its RLF $X_t$ can be extended as a
continuous function to the $G_\delta$-set $\mathcal{U}$;
        \item ergodic vector fields $b$ are a residual $G_\delta$-set in $\mathcal{U}$;
        \item weakly mixing vector fields $b$ are a residual $G_\delta$-set in $\mathcal{U}$;
    \item strongly mixing vector fields $b$ are a first category set in $\mathcal{U}$;
    \item exponentially (fast) mixing vector fields are a dense subset of $\mathcal{U}$.
    \end{enumerate}
    The proof of these results is based on the density of BV vector fields such that $X_{t=1}$ is a permutation of subsquares, and suitable perturbations of this flow to achieve the desired ergodic/mixing behavior. These approximation results have an interest of their own. \\
    A discussion on the extension of these results to $d \geq 3$ is also presented. 
\end{abstract}
\begin{center}
    Preprint SISSA 19/2021/MATE
\end{center}

\noindent Key words: ergodicity, mixing, Baire Category Theorem, divergence-free vector fields, Regular Lagrangian Flows, rates of mixing. \\

\noindent MSC2020: 26A21, 35Q35, 37A25.

\tableofcontents
\section{Introduction}
	
Consider a divergence free vector field $b : \R^+ \times \mathbb T^d \to \R^d$ and the continuity equation
\begin{equation}
\label{Equa:orign_trasn}
    \partial_t \rho_t + D \cdot (\rho_t b_t) = 0, \quad \rho_{t=0} = \rho_0.
\end{equation}
In recent year the following question has been addressed: is the solution $\rho_t$ approaching weakly to a constant as $t \to \infty$? The meaning of "approaching to a constant" is usually formalized as
\begin{equation}
\label{Equa:conv_funct}
    \rho_t \underset{t\to \infty}{\to} \dashint \rho_0 \mathcal L^d \quad \text{weakly in $L^2$},
\end{equation}
since $\|\rho_t\|_{L^p}$ is constant (at least for positive solutions and sufficiently regular vector fields) and this is referred to as \emph{functional mixing} (another notion of mixing is the \emph{geometric mixing} introduced in \cite{Bressan_conj}, but for our purposes the functional mixing above is the most suitable, since it is related to Ergodic Theory. \\
Without any functional constraint on the space derivative $Db_t$, it is quite easy to obtain mixing in finite time: a well known example is \cite{Depauw}. A similar idea, used in a nolinear setting, can be found in \cite{Bressan_illposed}. The problem is usually formulated as follows: assume that $b \in L^\infty_{t,x} \cap L^\infty_t W^{s,p}_x$, what is the maximal speed of convergence in \eqref{Equa:conv_funct}? \\

\noindent This question has been addressed in several papers. In \cite{Alberti_mix} the 2d-case has been throughly analyzed, and the main results are the explicit construction of mixing vector fields when the initial data is fixed: the authors are able to achieve the optimal exponential mixing rate for the case $W^{1,p}$, $p > 1$, and study also the case $s < 1$ (mixing in finite time) and $s > 1$ (mixing at a polynomial rate). Recall that for $s=1, p>1$ the mixing is at most exponential \cite{Crippa_DeLellis}, while the same estimate in $W^{1,1}$ (or equivalently $\BV$) is still open \cite{Bressan_conj}. In \cite{univ:mixer} the authors construct a vector field which mixes at an exponential rate every initial data, and it belongs to $L^\infty_t W^{s,p}_x$ for $s < \frac{1+\sqrt{5}}{2}$, $p \in [1,\frac{2}{2s+1-\sqrt{5}})$. The autonomous 2d vector field is special, having an Hamiltonian structure: indeed in \cite{Marconi_Bonicatto_poly} the authors show that the mixing is polynomial with rate $t^{-1}$ when $b \in \BV$. \\

\noindent In this paper we consider the different problem: how many vector fields are mixing? More precisely, we study the mixing properties of flows generated in the unit square $K=[0,1]^2$ by divergence-free vector fields $b:[0,1]\times K \rightarrow \R^2$ belonging to the space $L^\infty([0,1],\BV(K))$: in order to avoid problems at the boundary, we assume that the vector field $b$ is divergence-free and $\BV$ when extended  to whole $\R^2$. In order to shorten the notation, we will sometimes write $\BV(K)$, $K = [0,1]^2$ as the space $\BV(\R^2) \cap \{\supp b \subset K\}$. \\
All the results stated here can be extended to the case $x \in \T^2$ with minor modifications; our choice is in the spirit of \cite{Shnirelman}. \\
In this setting, there exists a unique flow $t \rightarrow X_t \in C([0,1],L^1(K;K))$ (called Regular Lagrangian Flow (RLF)) of the ODE
\begin{equation*}
	\begin{cases}
		\frac{d}{dt} X_t(y) = b(t, X_t(y)), \\
		X_{t=0}(y)=y,
	\end{cases}
\end{equation*}
which is measure-preserving and stable, see \cite{Ambrosio:BV,Ambrosio:Luminy} and Section \ref{sect:RLF}. 
Our idea is to consider the $\mathcal L^2$-a.e. invertible measure preserving map $X_{t=1} : K\rightarrow K$ as an \emph{automorphism} of the measure space $(K,\mathcal{B}(K),\L^2\llcorner_K)$ and apply the tools of Ergodic Theory. Here and in the following $\L^2\llcorner_K$ is the Lebesgue measure on $K$ and $\mathcal{B}(K)$ are the Borel subsets of $K$. We call $G(K)$ the group of automorphisms of $K$. We underline that the additional difficulty is to retain that the maps under consideration are generated by a divergence-free vector field in $L^\infty_t \BV_x$. \\

\noindent There is a rich literature in Ergodic Theory that has deeply investigated the genericity properties of mixing for invertible and measure-preserving maps. These results are due mostly to Oxtoby and Ulam \cite{Oxtoby}, Halmos  \cite{Halmos:ergodic,Halmos:weak:mix}, Katok and Stepin \cite{Katok} and Alpern \cite{Alpern}.  They proved that the set of ergodic transformation is a residual $G_\delta$-set (i) in the set $G(K)$ with the \emph{neighbourhood topology}\footnote{The neighbourhood topology is indeed the convergence in measure, see Subsection \ref{topology}.}  \cite{Halmos:weak:mix}, (ii) in the set of measure-preserving homeomorphisms of a connected manifold with the strong topology\footnote{A sequence of maps $T_n\rightarrow T$ in the strong topology if $T_n\rightarrow T$  and $T_n^{-1}\rightarrow T^{-1}$ uniformly on $K$.} \cite{Oxtoby,Katok}. Moreover, the transformations satisfying a stronger condition known as \emph{weak mixing}, that is 
\begin{equation*}
            \lim_{n\to\infty}\frac{1}{n}\sum_{j=0}^{n-1}\left[\mu(T^{-j}(A)\cap B)-\mu(A)\mu(B)\right]^2=0,
        \end{equation*}
for every $A,B$ measurable sets, are still a residual $G_\delta$ set \cite{Halmos:weak:mix,Katok}. In 1976 Alpern showed that these problems are indeed connected by using the Annulus Theorem \cite{Alpern}. A different result holds for strongly mixing maps, i.e such that
 \begin{equation*}
            \lim_{n\to\infty} \mu(T^{-n}(A)\cap B) = \mu(A)\mu(B).
        \end{equation*}
It was shown firstly by Rokhlin in \cite{Rokhlin} (see \cite{Weiss} for an exposition of Rokhlin's work) and then by D. Ornstein \cite{Halmos:lectures} that (strongly) mixing maps are a first category set in the neighborhood topology. 

\smallskip

\noindent In these settings, the genericity properties of measure-preserving (weakly) mixing or ergodic maps are fairly understood; to our knowledge a similar analysis has not been done for flows generated by vector fields with additional regularity requirements (e.g. $b \in L^\infty_t \BV_x$).
The aim of our work is to extend the above genericity results to divergence-free vector fields whose Regular Lagrangian Flow is ergodic and weakly mixing (in dimension $d=2$, but see the discussion below on the extension to every dimension $d \geq 3$). 

\medskip

\noindent Let $b\in L^\infty([0,1],\BV (K))$ be a divergence-free vector field. 

\begin{definition}
      We say that $b$ is \emph{ergodic (weakly mixing, strongly mixing)} if its unique Regular Lagrangian Flow evaluated at time $t=1$ 
      is ergodic (respectively weakly mixing, strongly mixing). 
  \end{definition} 
  
  \noindent In the original setting \eqref{Equa:orign_trasn}, if $b_{t+1} = b_t$ (i.e. it is time periodic of period $1$), then it is fairly easy to see that if $b$ is strongly mixing as in the above definition then \eqref{Equa:conv_funct} holds, while for weakly mixing vector fields $b$ it holds the weaker limit
\begin{equation*}
      \lim_{T \to \infty} \frac{1}{T} \int_0^T \bigg( \int_K \bigg(\rho_t - \dashint \rho_0 \mathcal L^2\bigg) \phi\mathcal L^2\bigg) ^2 dt = 0, \quad \forall \phi \in L^2(K).
\end{equation*}

    \noindent Our main result is the following.
    
\begin{theorem}
\label{Theo:main_intro}
    There exists a $G_\delta$-subset $\mathcal U \subset L^1([0,1];L^1(K)) \cap \{b : D \cdot b = 0\}$ containing all divergence-free vector fields in $L^\infty([0,1];\BV(K))$ with the following properties:
    \begin{enumerate}
    \item \label{Point1:main_intro_Phi} the map $\Phi$ associating $b$ with its RLF $X_t$,
    \begin{equation*}
     \Phi:\lbrace b\in  L^\infty_t\BV_x: D\cdot b_t=0\rbrace \rightarrow C([0,1],L^1(K)),
     \end{equation*}
     can be extended as a
continuous function to the $G_\delta$-set $\mathcal{U}$;
        \item \label{Point2:main_intro_ergo} ergodic vector fields $b$ are a residual $G_\delta$-set in $\mathcal{U}$;
        \item \label{Point3:main_intro_weak} weakly mixing vector fields $b$ are a residual $G_\delta$-set in $\mathcal{U}$;
    \item \label{Point4:main_intro_firstcat} strongly mixing vector fields $b$ are a first category set in $\mathcal{U}$;
    \item \label{Point5:main_intro_expo} exponentially (fast) mixing vector fields are a dense subset of $\mathcal{U}$.
    \end{enumerate}
\end{theorem}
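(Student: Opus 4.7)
The plan is to port the Halmos--Oxtoby--Katok--Alpern--Rokhlin genericity machinery from the space $G(K)$ of measure-preserving automorphisms to the BV-flow setting. The bridge, flagged in the abstract as ``of interest of its own'', is an approximation theorem: divergence-free $b\in L^\infty_t\BV_x$ whose time-$1$ RLF is a rigid permutation of an arbitrarily fine dyadic partition of $K$ into equal subsquares are dense in $L^\infty_t\BV_x$, hence in $L^1_{t,x}$.

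For Point~\ref{Point1:main_intro_Phi}, continuity of $\Phi:L^\infty_t\BV_x\to C([0,1],L^1(K))$ is the classical DiPerna--Lions--Ambrosio stability. I would define $\mathcal U$ as the set of $b\in L^1_{t,x}\cap\{D\cdot b=0\}$ at which the upper and lower multivalued extensions of $\Phi$ along $L^\infty_t\BV_x$-approximating sequences agree as a single point of $C([0,1],L^1(K))$. This is the standard ``continuity-set of an extension'' construction, producing a $G_\delta$-set containing $L^\infty_t\BV_x$ on which $\Phi$ extends uniquely and continuously; in particular $b\mapsto X_1(b)\in G(K)$ is continuous from $\mathcal U$ into $G(K)$ equipped with the neighbourhood topology.

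For Points~\ref{Point2:main_intro_ergo}, \ref{Point3:main_intro_weak} and~\ref{Point4:main_intro_firstcat}, recall that ergodicity and weak mixing are $G_\delta$ conditions on $T\in G(K)$, while Rokhlin's theorem places strong mixing in a meagre $F_\sigma$. Pulling back through the continuous $\Phi$ preserves these Borel-complexity layers. It remains to prove density inside $\mathcal U$ of ergodic and weakly mixing vector fields: starting from the dense family of BV dyadic permutations, I would perturb each such building block inside a single subsquare by an additional BV divergence-free stirring whose time-$1$ map is conjugate to a Katok--Stepin-type ergodic (respectively weakly mixing) automorphism, and refine the dyadic scale to make the perturbation arbitrarily small in $L^1_{t,x}$. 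Points~\ref{Point2:main_intro_ergo} and~\ref{Point3:main_intro_weak} then follow from Baire. Point~\ref{Point4:main_intro_firstcat} follows because $\Phi(\mathcal U)$ contains a dense set of weakly mixing automorphisms, so each closed nowhere-dense layer of Rokhlin's $F_\sigma$ pulls back to a closed nowhere-dense subset of $\mathcal U$. For Point~\ref{Point5:main_intro_expo} I would use the same refine-and-perturb strategy with an explicit exponentially mixing building block, e.g.\ a BV realization of a Baker-type map or one of the constructions in~\cite{Alberti_mix}.

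The hard part is the central approximation/perturbation library: given arbitrary $b\in L^\infty_t\BV_x$ and $\e>0$, produce a nearby divergence-free BV field whose time-$1$ flow is a prescribed dyadic permutation, with quantitative BV bounds that survive the subsequent local plug-in of ergodic, weakly mixing, or exponentially mixing models on each subsquare. The combined divergence-free-plus-BV constraints preclude naive cut-and-paste: one must realize elementary transpositions of neighbouring subsquares as autonomous BV shear/rotation flows, concatenate these in a time-dependent schedule, and control the total variation across refinement scales. Once this geometric toolbox is built, the Baire-category step is an exercise in Alpern--Halmos-style reasoning, and the caveats for $d\geq 3$ hinted at in the abstract trace back to exactly this 2D-specific construction.
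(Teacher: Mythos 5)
Your overall architecture (extend $\Phi$ to a $G_\delta$-set, pull back the Halmos $G_\delta$ characterizations, and prove density via a permutation-approximation theorem plus local perturbations) is the same as the paper's, and your treatment of Point~\ref{Point1:main_intro_Phi} is fine. But the density step as you describe it has a genuine obstruction. Plugging a weakly/strongly/exponentially mixing model into a \emph{single} subsquare of a permutation of $n$ subsquares produces, at time $t=1$, a constant-height Kakutani tower over that model: if $S$ is the base map and $T(x,i)=(x,i+1)$ for $i<n-1$, $T(x,n-1)=(Sx,0)$, then $f(x,i)=e^{2\pi i j/n}$ is an eigenfunction with eigenvalue $e^{2\pi i j/n}$ regardless of how mixing $S$ is. Such a map is never weakly mixing, so your route to Points~\ref{Point3:main_intro_weak} and~\ref{Point5:main_intro_expo} fails. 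The paper's local plug-in (the folded Baker's map in one subsquare) is used only to get \emph{ergodicity}; for exponential mixing it applies the mixer to \emph{pairs of adjacent subsquares}, transferring mass between different elements of the cycle, and proves aperiodicity of the resulting finite Markov shift. A second missing ingredient is the cycle-merging step: the approximation theorem only yields a permutation of subsquares, which in general decomposes into several disjoint cycles, each of whose supports is an invariant set; one must first perturb it into a \emph{single} cycle (the paper does this with transposition flows along a spanning tree of adjacent cycles) before even the ergodic perturbation can work.

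There is also a gap in your argument for Point~\ref{Point4:main_intro_firstcat}: nowhere-density is not preserved under preimages by continuous maps, so knowing that the layers of Rokhlin's $F_\sigma$ are closed and nowhere dense in $G(K)$ does not make their pullbacks nowhere dense in $\mathcal U$; and density of weakly mixing fields does not help, since a weakly mixing map may well satisfy $\bigl| |T^{-k}(A)\cap A|-\tfrac14 \bigr|\leq\tfrac15$ for all large $k$ (weak mixing only controls density-one subsequences). What is actually needed is that the complement of each closed layer is dense in $\mathcal U$, and the paper obtains this from the density of \emph{periodic} vector fields: a permutation of period $k$ satisfies $|T^{-k}(A)\cap A|=\tfrac12$, violating the defining inequality. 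So the permutation-approximation theorem is not only the engine for density of mixing fields but also the direct source of Point~\ref{Point4:main_intro_firstcat}.
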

\noindent We will reasonably call the flow $X_t = \Phi(b)$, $b \in \mathcal{U}$, as the Regular Lagrangian Flow of $b$, even if we are outside the setting where RLF are known to be unique: however $X_t$ is the unique flow which can be approximated by RLF of smoother vector fields $b^n$ as $b^n \to b$ in $L^1$. The existence of such a set $\mathcal{U}$ is due to purely topological properties of metric spaces (Proposition \ref{prop:gen}).

\medskip

\noindent Our proof adapts some ideas from \cite{Halmos:weak:mix} to our setting: we give an outline of Halmos' analysis. First of all, both ergodic automorphisms and weakly mixing automorphisms are a $G_\delta$-set \cite{Halmos:ergodic,Halmos:weak:mix}. Next, it is shown that the mixing properties are invariant for conjugation, i.e. if $T : [0,1] \to [0,1]$ is weakly/strongly mixing and $R : [0,1] \to [0,1]$ is an automorphism, then $R \circ T \circ R^{-1}$ is weakly/strongly mixing too. It remains to be proved that weakly mixing maps are dense: define a \emph{permutation} as an automorphism of $[0,1]$ sending dyadic intervals (subintervals of $[0,1]$ with dyadic endpoints) into dyadic intervals by translation. Cyclic permutations (i.e. permutations made by a unique cycle) of the same intervals are clearly conjugate. One of the key ingredients of Halmos' proof is that, for every non periodic automorphism (i.e. $T^n x \not= x$ for all $n$ in a conegligible set of points $x$), there exists a cyclic permutation close to it in the neighbourhood topology, and by the previous observation about conjugation of permutations one deduces that if $T$ is non-periodic then the maps of the form $R\circ T\circ R^{-1}$ form a dense set. In particular, the weakly mixing maps are a $G_\delta$-set containing a non periodic map, hence it is residual.

\medskip

\noindent In our setting, the fact that ergodic/weakly mixing vector fields form a $G_\delta$-subset of $\mathcal{U}$ is an easy consequence of the Stability Theorem for Regular Lagrangian Flows and the definition of the map $\Phi$ (see Point \eqref{Point1:main_intro_Phi} and Proposition \ref{Prop:weak_mix_G_delta}). Indeed, since both ergodic automorphisms and weakly mixing automorphisms are a $G_\delta$-set \cite{Halmos:ergodic,Halmos:weak:mix}, then by the continuity of the map $\Phi$
associating $b$ with the RLF $X_t$, also ergodic and weakly mixing vector fields are a $G_\delta$-set. \\
\smallskip
\noindent Unfortunately, we cannot use conjugation of a RLF $X_t$ with an automorphism $R$ of $K$, since in general $R \circ X_{t=1} \circ R^{-1}$ is not a RLF generated by $b \in L^\infty_t BV_x$ (or even $b \in \mathcal{U}$). However, we are able to prove the density in $\mathcal{U}$ of vector fields $b \in L^\infty_t \BV_x$ whose RLF is a cyclic permutation of subsquares of $K$, which is the natural extension of the permutation of intervals used in \cite{Halmos:weak:mix}. More precisely, the map $T = X_{t=1}$ sends by a rigid translation subsquares of some rational  grid $\N\times\N\frac{1}{D}$, where $D\in\N$, into subsquares of the same grid (it will be clear later that being dyadic as in \cite{Halmos:weak:mix} is not relevant, see Lemma \ref{lem: dyadic} and Remark \ref{Shnirelman: sbaglio}), and as a permutation of subsquares it is made by a single cycle. \\
\noindent The precise statement is the following.
\begin{theorem}
\label{riassuntivo}
	Let $b\in L^\infty([0,1],\BV(K))$ be a divergence-free vector field. Then for every $\epsilon>0$ there exist $1 \ll D \in\N$, two positive constants $C_1,C_2$ and a divergence-free vector field $b^c\in L^\infty([0,1],\BV(K))$ such that
	\begin{equation}
	\label{finali}
		||b-b^c||_{L^1(L^1)}\leq\epsilon, \quad ||\TV(b^c)(K)||_{\infty}\leq  C_1||\TV(b)(K)||_{\infty} + C_2
		\end{equation}
	and the map $X^c_{t=1}:K\rightarrow K$, where $X^c_t:[0,1]\times K\rightarrow K$ is the flow associated with $b^c$, is a ${D}^2$-cycle of subsquares of size $\frac{1}{{D}}$.
\end{theorem}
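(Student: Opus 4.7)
The plan is to modify $b$ only on a short final sub-interval $[1-\delta,1]$, with $\delta$ small depending on $\epsilon$ and $\|\TV(b)\|_{L^\infty_t}$. On $[0,1-\delta]$ we set $b^c:=b$, so the partial RLF $T_0:=X^b_{t=1-\delta}$ is inherited unchanged; on $[1-\delta,1]$ we insert a divergence-free $\BV$ correction $b^{\mathrm{corr}}$ whose time-$\delta$ flow sends $T_0$ onto a prescribed $D^2$-cycle $\sigma^c$ of subsquares of side $1/D$. The $L^1_{t,x}$ cost is then bounded by $\delta(\|b\|_{L^\infty_{t,x}}+\|b^{\mathrm{corr}}\|_{L^\infty_{t,x}})|K|$, which we keep below $\epsilon$ by taking $\delta$ small.

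\emph{Step 1 (finding a cyclic permutation approximating $T_0$).} Since $T_0$ is a measure-preserving automorphism of $K$, the Halmos/Oxtoby--Ulam/Alpern density results recalled in the introduction yield, for any prescribed $\eta>0$, some $D\in\N$ large and a single $D^2$-cycle $\sigma^c$ of subsquares of side $1/D$ with $\|\sigma^c-T_0\|_{L^1(K;K)}\le\eta$. If the periodic part of $T_0$ obstructs strict cyclic approximation, we first absorb a small perturbation of $b$ on $[0,1-\delta]$ to make $T_0$ aperiodic (Rokhlin-tower style), at $L^1$-cost $O(\eta)$ and essentially no $\BV$-cost. We choose $\eta\ll\epsilon$.

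\emph{Step 2 (realizing the correction by a $\BV$ flow).} The map $\sigma^c\circ T_0^{-1}$ coincides with the identity outside a set of measure $\le\eta$. We decompose it into a finite ordered product of elementary moves, each either translating a small block to an adjacent subsquare or swapping two such blocks, and realize each move on a sub-sub-interval of $[1-\delta,1]$ by a piecewise-affine divergence-free rotational/shear field of $\TV$-norm $\const$. Sequencing these moves so that at each time at most one is active yields $\|\TV(b^{\mathrm{corr}})\|_{L^\infty_t}\le C_2(\epsilon)$, and hence $\|\TV(b^c)\|_{L^\infty_t}\le\|\TV(b)\|_{L^\infty_t}+C_2$, giving \eqref{finali} with $C_1=1$.

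\emph{Main obstacle.} The hard part is Step 2: converting the $L^1$-proximity of $\sigma^c\circ T_0^{-1}$ to the identity into an honest decomposition into finitely many $\BV$-realizable elementary moves with bounded total $\BV$-norm. This requires a geometrically efficient matching of the source and target sub-regions of the disagreement set, and a design of the transport paths that simultaneously keeps them non-interfering, divergence-free, and $\BV$-bounded, all while balancing the velocity bound $\|b^{\mathrm{corr}}\|_\infty\lesssim 1/\delta$ against the need for $\delta$ small enough that the $L^1_{t,x}$-estimate still wins.
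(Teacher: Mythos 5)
Your proposal does not close the argument: the entire mathematical content of the theorem is concentrated in what you label the ``Main obstacle,'' and the route you choose in Step 1 makes that obstacle essentially insurmountable. By invoking the Halmos/Oxtoby--Ulam approximation you obtain a cyclic permutation $\sigma^c$ of subsquares that is close to $T_0$ only \emph{in measure}; the map $\sigma^c\circ T_0^{-1}$ is then an abstract automorphism agreeing with the identity off a set of small measure, with no geometric structure whatsoever on the disagreement set. There is no general mechanism to decompose such a map into finitely many divergence-free, $\BV$-bounded, non-interfering elementary moves --- this is precisely the reason the paper cannot use Halmos' conjugation argument and instead must \emph{construct} the approximating permutation through the flow itself: smoothing $b$, splitting time so each partial flow is $C^3$-close to the identity, applying the Shnirelman-type perturbation (Lemma \ref{lem: dyadic}) to make the flow piecewise affine on a rational grid with controlled $\BV$ cost (Lemma \ref{BV:estim:pert:flow}), inserting calibrated counter-rotations to turn affine images of rectangles into translated squares (Lemmas \ref{TV:rotations}, \ref{lem:rotations}, and the stopping-time argument of Theorem \ref{Prop:piececlo}), and only then merging the resulting disjoint cycles into a single one by transpositions along a tree of adjacent cycles (Lemma \ref{lem:cycl:1} and Proposition \ref{unique:cycle}). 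The same objection applies to your casual claim that $T_0$ can be made aperiodic by a Rokhlin-tower perturbation ``at essentially no $\BV$-cost'': perturbing the time-one map does not lift to a small $\BV$ perturbation of the generating vector field.

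Two further quantitative points are wrong as stated. First, your $L^1_{t,x}$ bound $\delta\bigl(\|b\|_{L^\infty_{t,x}}+\|b^{\mathrm{corr}}\|_{L^\infty_{t,x}}\bigr)|K|$ is not small, since $\|b^{\mathrm{corr}}\|_\infty\lesssim 1/\delta$; what actually saves the estimate in the paper (Proposition \ref{unique:cycle} and Remark \ref{Rem:better_mixing}) is that the correction is supported on a region of area $O(M^{-2})$, so its $L^1_x$ norm is small even though its $L^\infty_x$ norm is large --- your accounting must be done in $L^1_x$, not $L^\infty_x$. Second, your conclusion $C_1=1$ contradicts the structure of the problem: the affine approximation of Lemma \ref{lem: dyadic} necessarily costs a multiplicative constant $C_1>1$ on the total variation (Remark \ref{Rem:const_BV_C}), which is exactly why the main theorem is stated in the $G_\delta$-set $\mathcal U$ rather than in a fixed $\BV$ ball. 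A correct proof cannot obtain \eqref{finali} with $C_1=1$ by perturbing only on a final interval $[1-\delta,1]$, because the permutation structure must be imposed on the full flow, not appended to it.
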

\noindent The above approximation is the most technical part of the paper, and it is the point which forces to state the theorem in $\mathcal{U}$ and not in the original space $b \in L^\infty_t \BV_x$: indeed, while achieving the density in the $L^1_{t,x}$-topology, the total variation increases because of the constants $C_1,C_2$ in \eqref{finali}. (It is possible to improve the first estimate of \eqref{finali} to $\|b - b^c\|_{L^\infty L^1} \leq \epsilon$, see Remark \ref{Rem:better_mixing}, but to avoid additional technicalities we concentrate on the simplest results leading to Theorem \ref{Theo:main_intro}.)

\medskip

\noindent We remark that the above approximation result is sufficient to prove that strongly mixing vector fields are a set of first category (Proposition \ref{Prop:weak_mix_G_delta}): indeed, Theorem \ref{riassuntivo} shows the density in $\mathcal{U}$ of divergence free vector fields whose flow is made of periodic trajectories with the same period $D^2$. This observation is the key to obtain Point \eqref{Point4:main_intro_firstcat} of Theorem \ref{Theo:main_intro}.

\medskip

\noindent Looking at cyclic permutations of subsquares is an important step to obtain ergodic (and then weakly mixing and strongly mixing) vector fields: indeed, instead of studying the map $X^c_{t=1}$ (the RLF generated by $b^c$ of Theorem \ref{riassuntivo} above) in the unit square $K$ with the Lebesgue measure $\mathcal L^2$, it is sufficient to work in the finite space made of the centers of the subsquares
\begin{equation}
    \label{Equa:Omega_intro}
\Omega = \bigg\{ x = \bigg( \frac{k_1 - 1/2}{D},\frac{k_2 - 1/2}{D} \bigg), k_1,k_2 = 1,\dots,D \bigg\},
\end{equation}
where the measure-preserving transformation $X^c_{t=1}$ reduces to a cyclic permutations. In particular in $\Omega$ it is already ergodic.

\medskip

\noindent Since we cannot use the conjugation argument as we observed above, the final steps of the proof of Theorem \ref{Theo:main_intro} differ from the ones of  \cite{Halmos:weak:mix}. Indeed we give a general procedure to perturb vector fields $b^c \in L^\infty_t \BV_x$ (whose RLF $X_t^c$ at $t=1$ is a cyclic permutation of subsquares) into ergodic vector fields $b^e$ (strongly mixing vector fields $b^s$) still belonging to $L^\infty_t \BV_x$: here the explicit form of $X^c$ plays a major role, allowing us to construct \emph{explicitly} the perturbations to $b^c$ (Subsections \ref{subS:ergodicity},\ref{S:mixing_maps}). \\
The key idea is to apply the (rescaled) \emph{universal mixer} vector field (introduced in \cite{univ:mixer}) whose RLF at time $t=1$ is the \emph{Folded Baker's map}
\begin{equation}
\label{Equa:univers_U_intro}
    U=U\llcorner_{t=1}=\begin{cases}
        \left(-2x+1,-\frac{y}{2}+\frac{1}{2}\right) & x\in\left[0,\frac{1}{2}\right), \\
         \left(2x-1,\frac{y}{2}+\frac{1}{2}\right) & x\in\left(\frac{1}{2},1\right],
    \end{cases} \qquad y \in [0,1],
\end{equation}
to the subsquares of the grid $\N\times\N\frac{1}{D}$ given by Theorem \ref{riassuntivo}. \\
In order to achieve ergodicity, it is sufficient to apply the universal mixer $U$ inside a single subsquare, because the action of $X^c_{t=1}$ is already ergodic in $\Omega$ being a cyclic permutation. Together with the fact that ergodic vector fields are a $G_\delta$-set, this gives the proof of Point \eqref{Point2:main_intro_ergo} of Theorem \ref{Theo:main_intro}. \\
The perturbation to achieve exponential mixing is more complicated, since we need to transfer mass across different subsquares. The idea is to apply the universal mixer $U$ to adjacent couples of subsquares, a procedure which assures that the mass of $\rho$ is eventually equidistributed among all subsquares. The exponential mixing is a consequence of the finiteness of $\Omega$ and the properties of $U$ (see Proposition \ref{Prop:smix_dense}). This concludes the proof of Point \eqref{Point5:main_intro_expo} of Theorem \ref{Theo:main_intro}, and since strongly mixing vector fields are a subset of weakly mixing vector fields we obtain also Point \eqref{Point3:main_intro_weak}, concluding the proof of the theorem.

\medskip

\noindent A completely analogous result can be obtained in any dimension by adapting the above steps, at the cost of additional heavy technicalities. In this work we decided to sketch the proof of the key estimates (i.e. the ones requiring new ideas) in the general case (see Section \ref{Sss:d-dim} and the comments in next section).

\subsection{A discussion of the key points and results leading to the main theorem} 

The sketch of the proof given above includes steps which are somehow standard either in the theory of linear transport or in ergodic analysis. Other points of the proof instead require to introduce new ideas or at least to significantly develop tools present in the  literature. This section is devoted to expand these critical parts in order to help the reader to understand the novelties contained in this work. \\
In most proofs we tried to get the best possible result in term of the $L^1$ and $\BV$ norms, with the hope to prove a similar statement inside the compact subset $\{b:\|b_t\|_{L^1}, \TV(b_t) \leq C\} \subset L^1_{t,x}$. However there are some delicate arguments where we have to increase the total variation of $b$ of a fixed amount: we will point out these points here below.

\medskip

\noindent The most technical part of this paper is the proof of the approximation theorem through cyclic permutations, Theorem \ref{riassuntivo}. It is based on two results: the first one, whose proof is the content of Section \ref{S:permutation}, is an approximation through vector fields whose RLF $X_t$ at time $t=1$ is a permutation of subsquares. The second one exploits the classical result that every permutation is a product of disjoint cycles (Proposition \ref{unique:cycle}), in order to merge these cycles into a single one.

\subsubsection{Approximations of flows by permutations}

The approximation through divergence free vector fields $b$ whose flow at $t=1$ is a permutation of squares has been already studied in \cite{Shnirelman} in the context of generalized flows for incompressible fluids. Indeed the starting point of Section \ref{S:permutation} is Lemma \ref{lem: dyadic}, whose statement is almost identical to Lemma 4.3 of \cite{Shnirelman}: it says that if $T$ is a smooth map sufficiently close to identity, there exists an arbitrarily close flow $\sigma_t$, $t \in [0,1]$, such that $\sigma_{t=0} = T$ and $\sigma_{t=1}$ maps affinely rectangles whose edges are on a dyadic grid into rectangles belonging to the same grid. Even if the ideas of the proof are completely similar to the original ones, we choose to make them more explicit (see also Remark \ref{Shnirelman: sbaglio} for some comments on the original proof). The extension to general dimension seems non-trivial (at least to us): we decided to sketch the main ideas in Section \ref{Sss:d-dim}, leaving the details to the interested reader (we observe however that our approach is somehow different from the $2d$ case, differently from what it is stated in \cite{Shnirelman}).

\medskip

\noindent At this point the proof diverges from \cite{Shnirelman}, due to the fact that in his case one has to control the $L^2$-norm of the vector field while here we need to build a perturbation of a vector field (not of a map) and to estimate its $\BV_x$-norm. In Lemma \ref{BV:estim:pert:flow} we prove that the perturbation $\sigma_t$ constructed in the above paragraph (i.e. in Lemma \ref{lem: dyadic}) can be encapsulated inside the flow $X_t$ so that the resulting vector field is close in $L^\infty_t L^1_x$ and remains in $L^\infty_t \BV_x$ if the grid is sufficiently small, always under the assumption that $X_t$ is close to identity.

\medskip

\noindent We finally arrive to the approximation theorem through permutations (Theorem \ref{Prop:piececlo}), which we think that  can have an independent interest:

\begin{theorem}
\label{Prop:piececlo_intro}
Let $b\in L^\infty([0,1];BV(K))$ be a divergence-free vector field. Then for every $\epsilon>0$ there exist $\delta', C_1,C_2>0$ positive constants, $D \in \N$ arbitrarily large and a divergence-free vector field $b^\epsilon\in L^\infty([0,1];BV(K))$ such that 
\begin{enumerate}
\item $\supp b^\epsilon_t\subset\subset K^{\delta'} = [\delta',1-\delta']^2$,
\item it holds
\begin{equation}
\label{main result_intro}
    \|b - b^\epsilon\|_{L^\infty(L^1)} \leq \epsilon, \quad || \TV(b^\epsilon) (K)||_{\infty}\leq  C_1 ||\TV(b)(K)||_\infty + C_2,
\end{equation}
\item the map $X^\epsilon\llcorner_{t=1}$ generated by $b^\epsilon$ at time $t=1$ translates each subsquare of the grid $\N \times \N \frac{1}{D}$ into a subsquare of the same grid, i.e. it is a permutation of squares.
\end{enumerate}
\end{theorem}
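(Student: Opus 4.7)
The strategy is to split $[0,1]$ into many short subintervals on which the flow of $b$ is nearly the identity, apply Lemma \ref{lem: dyadic} to approximate the local flow by a grid permutation, and glue the pieces via Lemma \ref{BV:estim:pert:flow}. First I would mollify $b$ in space and truncate in a thin neighbourhood of $\partial K$ to obtain a smooth divergence-free $\tilde b$ supported in $K^{2\delta'}$ with $\|b-\tilde b\|_{L^\infty L^1}\leq \epsilon/2$ and $\TV(\tilde b_t)(K)\leq \TV(b_t)(K)$. Its RLF $\tilde X_t$ is then a uniformly continuous diffeomorphism, so that for $N$ large the partial maps $T_i := \tilde X_{t_{i+1}}\circ \tilde X_{t_i}^{-1}$, with $t_i := i/N$, are simultaneously smooth, measure-preserving, and uniformly close to the identity.

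On each $[t_i,t_{i+1}]$, Lemma \ref{lem: dyadic} then produces a homotopy $\sigma^i_s$, $s\in[0,1]$, with $\sigma^i_0 = T_i$ and $\pi_i := \sigma^i_1$ a permutation of a dyadic grid $\N\times\N\frac{1}{D_i}$, arbitrarily close to $T_i$. Replacing each $D_i$ with a common multiple $D$, every $\pi_i$ is simultaneously a permutation of the grid $\N\times\N\frac{1}{D}$. Lemma \ref{BV:estim:pert:flow} then encapsulates $\sigma^i$ as the two-point flow on $[t_i,t_{i+1}]$ of a divergence-free vector field $b^i\in L^\infty_t\BV_x$ that is $L^\infty L^1$-close to $\tilde b|_{[t_i,t_{i+1}]}$ and satisfies $\TV(b^i_t)(K) \leq C_1\TV(\tilde b_t)(K)+C_2$ for a.e. $t$.

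Setting $b^\epsilon := b^i$ on each $[t_i,t_{i+1}]$, the resulting RLF satisfies $X^\epsilon_{t_{i+1}}\circ (X^\epsilon_{t_i})^{-1} = \pi_i$, so $X^\epsilon_{t=1} = \pi_{N-1}\circ\cdots\circ\pi_0$ is a composition of permutations of $\N\times\N\frac{1}{D}$, hence itself such a permutation. Taking the maximum of the local $L^\infty L^1$-errors and choosing $N$ large with respect to the modulus of continuity of $\tilde X$ yields $\|b-b^\epsilon\|_{L^\infty L^1}\leq \epsilon$; the $\BV$-bound follows directly from the pointwise estimate of Lemma \ref{BV:estim:pert:flow}; and the support condition $\supp b^\epsilon_t\subset K^{\delta'}$ is inherited from $\tilde b$, since each $\sigma^i$ stays close to the identity and perturbs $\tilde b$ only where it already acts, provided $\delta'$ is chosen slightly smaller than the initial truncation radius.

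\textbf{Main obstacle.} I expect the delicate three-way balance between $N$ (number of time pieces), $D$ (spatial grid size), and the smallness of $T_i-\Id$ to be the technical heart of the argument. Lemma \ref{lem: dyadic} demands $T_i$ close to the identity, which forces $N$ large; approximating $T_i$ by a grid permutation in turn forces $D$ large relative to $|T_i-\Id|$; yet the $\BV$ cost of introducing jumps of $b^\epsilon$ along $D$ grid lines a priori grows with $D$. Closing this balance is exactly what Lemma \ref{BV:estim:pert:flow} is designed to achieve, and accounts for the additive constant $C_2$ in \eqref{main result_intro}. A secondary but unavoidable bookkeeping issue is refining the various $D_i$ to a common multiple without spoiling the other estimates, which is routine.
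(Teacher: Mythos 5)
There is a genuine gap, and it sits at the heart of the argument. You treat the output $\pi_i=\sigma^i_1$ of Lemma \ref{lem: dyadic} as \emph{a permutation of a dyadic grid}, and then conclude that $X^\epsilon_{t=1}=\pi_{N-1}\circ\cdots\circ\pi_0$ is a permutation of squares. But Lemma \ref{lem: dyadic} does not produce a permutation: it produces a map whose restriction to each subrectangle $P_{ij}$ is an \emph{affine} map with constant diagonal differential $\diag(\lambda,1/\lambda)$, $\lambda\neq 1$ in general, sending rectangles onto rectangles of different shape (and, as Remark \ref{Shnirelman: sbaglio} points out, the image rectangles are only rational, not dyadic). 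A composition of such piecewise affine maps is again piecewise affine with non-trivial diagonal differentials, not a rigid translation of subsquares, so Point (3) of the statement does not follow from your construction. The entire mechanism needed to upgrade "affine on rectangles" to "translation of squares" — the rotations of Lemma \ref{lem:rotations} — is absent from your plan.

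This omission is not cosmetic, because inserting the rotations naively destroys the BV bound: a rotation performed in a time window of length $\tau$ costs $\mathcal O(\mathrm{Area}/\tau)$ in total variation (Lemma \ref{TV:rotations} and Remark \ref{Rem:shnirel_blows}), so doing one per subinterval $[t_i,t_{i+1}]$ makes $\|\TV(b^\epsilon)\|_\infty$ blow up as $N\to\infty$. The paper's Step 4 resolves this by tracking, in each subsquare separately, the accumulated eigenvalue product $\sigma_1\cdots\sigma_i$ and performing a counter-rotation only when this deformation crosses a fixed threshold; estimate \eqref{form:princ} then converts the area of the rotated region into total variation already spent by $\tilde b$, yielding the multiplicative constant, while the one unavoidable final rotation per subsquare produces the additive constant $C_2$. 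Your attribution of $C_2$ to Lemma \ref{BV:estim:pert:flow} is therefore misplaced: that lemma's additive error is $\mathcal O(\wp/N)$ and vanishes as the grid refines; $C_2$ genuinely comes from the rotation bookkeeping you have not set up. The rest of your outline (mollification, time slicing, Lemma \ref{lem: dyadic} on each slice, encapsulation via Lemma \ref{BV:estim:pert:flow}, passing to a common grid) does match Steps 1--3 of the paper's proof.
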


\noindent We remark that in the statement of Theorem \ref{Prop:piececlo} it is also assumed that there exists $\delta>0$ such that for $\mathcal{L}^1$-a.e. $t\in[0,1]$, $\supp b_t \subset\subset K^\delta$. This is for technical reason, as standard approximation methods allows to deduce the Theorem \ref{Prop:piececlo_intro} above from Theorem \ref{Prop:piececlo}. \smallskip

\noindent The starting point of its proof is to divide the time interval $[0,1]$ into subintervals $[t_i,t_{i+1}]$ and apply the previous perturbations (Lemma \ref{lem: dyadic}) to $b_t$, $t \in [t_i,t_{i-1}]$. We however need an additional mechanism in order to obtain a permutation of subsquares and not a piecewise affine map at $t=1$, as it would be the case if we only use the perturbations above.

\smallskip

\noindent The introduction of this new perturbation is done in Section \ref{Ss:BV_rot}: the idea is that if a measure preserving map $T$ is diagonal with rational eigenvalues, then there exists a subgrid and a map $R$ made by two rotations such that $T \circ R$ maps subsquares of the new grid into subsquares instead of rectangles (Lemma \ref{lem:rotations}). The key point is that the total variation of the new map is bounded independently on the grid size, while the $L^1$-norm converges to $0$ as the grid becomes smaller and smaller. This gives better BV estimates than the construction of \cite{Shnirelman}. \\
In the proof of the theorem, this rotation mechanism has to act differently in each subrectangle where the map is affine at all times $t_i$ of the time partition: this means that the rotation mechanism is calibrated on the single subrectangle, depending on its evolution, and it differs from one to the other. More precisely, whenever a rectangle is distorted of a fixed amount, we let the rotation to act on a subgrid in order to obtain a permutation of subsquares (see Figure \ref{Fig:2-dim_case_rot_intro}). Without entering into technicalities, we just notice that this has to be done during the time evolution, and the grid has to be chosen sufficiently small so that it is the same for all rotations.

\begin{figure}[t]
\centering
\def\svgwidth{\textwidth}
\begingroup%
  \makeatletter%
  \providecommand\color[2][]{%
    \errmessage{(Inkscape) Color is used for the text in Inkscape, but the package 'color.sty' is not loaded}%
    \renewcommand\color[2][]{}%
  }%
  \providecommand\transparent[1]{%
    \errmessage{(Inkscape) Transparency is used (non-zero) for the text in Inkscape, but the package 'transparent.sty' is not loaded}%
    \renewcommand\transparent[1]{}%
  }%
  \providecommand\rotatebox[2]{#2}%
  \newcommand*\fsize{\dimexpr\f@size pt\relax}%
  \newcommand*\lineheight[1]{\fontsize{\fsize}{#1\fsize}\selectfont}%
  \ifx\svgwidth\undefined%
    \setlength{\unitlength}{799.29230938bp}%
    \ifx\svgscale\undefined%
      \relax%
    \else%
      \setlength{\unitlength}{\unitlength * \real{\svgscale}}%
    \fi%
  \else%
    \setlength{\unitlength}{\svgwidth}%
  \fi%
  \global\let\svgwidth\undefined%
  \global\let\svgscale\undefined%
  \makeatother%
  \begin{picture}(1,0.55575438)%
    \lineheight{1}%
    \setlength\tabcolsep{0pt}%
    \put(0,0){\includegraphics[width=\unitlength,page=1]{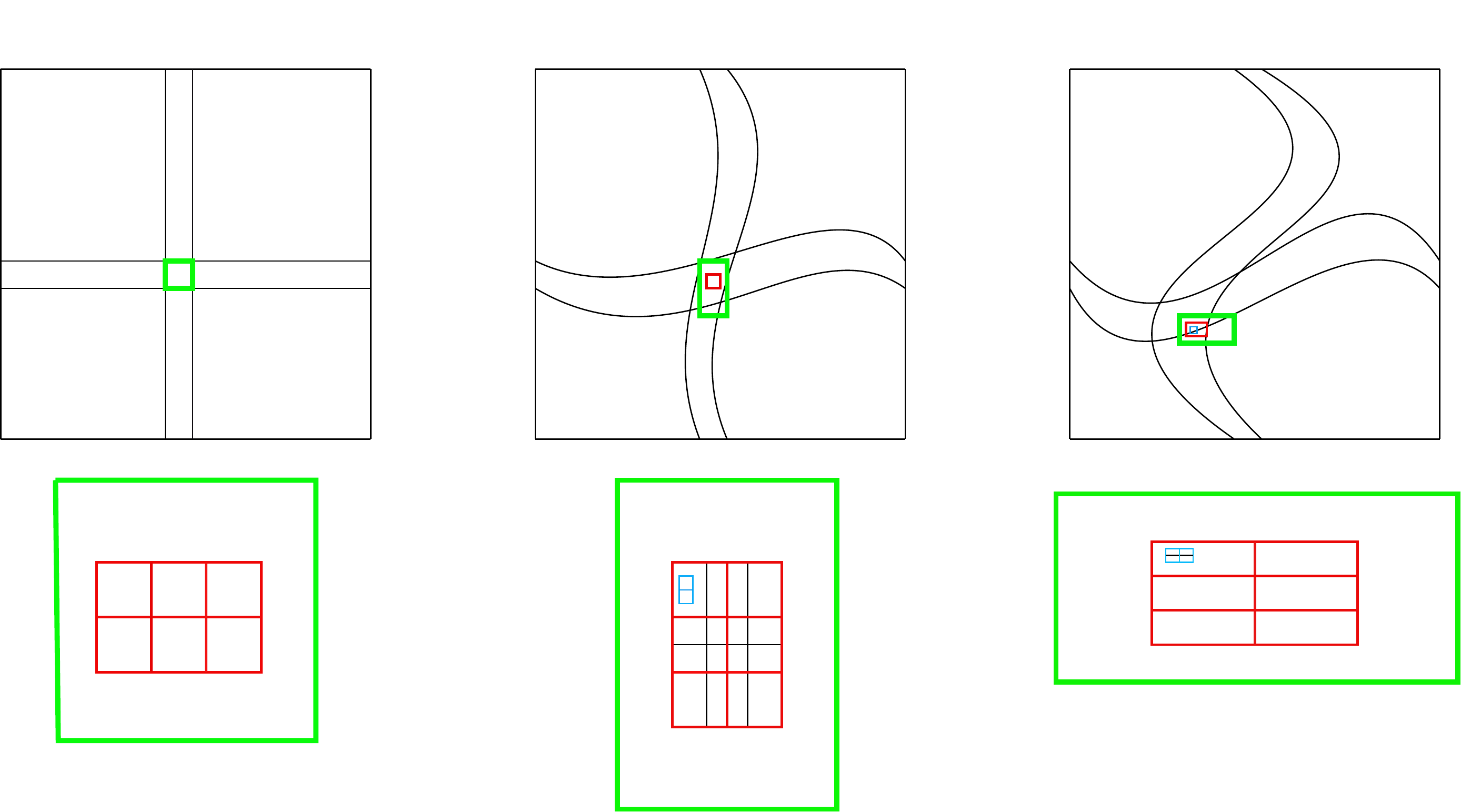}}%
    \put(0.08491887,0.52723804){\color[rgb]{0,0,0}\makebox(0,0)[lt]{\lineheight{1.25}\smash{\begin{tabular}[t]{l}$t=0$\end{tabular}}}}%
    \put(0.45127585,0.52583924){\color[rgb]{0,0,0}\makebox(0,0)[lt]{\lineheight{1.25}\smash{\begin{tabular}[t]{l}$t=t_1$\end{tabular}}}}%
    \put(0.82182928,0.52723804){\color[rgb]{0,0,0}\makebox(0,0)[lt]{\lineheight{1.25}\smash{\begin{tabular}[t]{l}$t=t_2$\end{tabular}}}}%
  \end{picture}%
\endgroup%

\caption{A graphic explanation of the action of rotations: the three top frames above shows the evolution of curves at different times $t_i$, where the perturbation of Lemma \ref{lem: dyadic} takes care of transforming the green square affinely into the green rectangles. The three bottom frames is the action of the rotation on a finer grid: the red rectangle is chosen so that the action of the affine map $X_{t=t_1}$ coincides with a rotation by $\pi/2$ (as a set, but the black grid (image through an affine map) is not the image of the red grid after a rotation), and then the red grid is mapped into itself when composed with a rotation of $\pi/2$ inside the red rectangle. At the next step, one chooses again a finer grid (the light blue one) to perform the same transformation, so that the blue grid is mapped into itself.}
\label{Fig:2-dim_case_rot_intro}
\end{figure}

\smallskip

\noindent The interesting part of the above theorem is the form of the estimate for the Total Variation in \eqref{main result_intro}. The constant $C_1$ is easily explained: in the first two paragraph above we explained how to perturb a smooth map/flow into a piecewise affine flow on subrectangles. The key argument is that we need to contrast rotations of the original vector field $b_t$, and then this action needs in principle a total variation of the order of $\TV(b_t)$: we believe that this constant $C_1$ can be optimized, but it is not necessary here, because the hard term is the one leading to $C_2$. \\
Indeed, the counter-rotation is of the order of the area of the region where it is acting (see Lemma \ref{TV:rotations}), if it is not too much distorted. The partition into times where these counter-rotations act is then related to their distorsion, which is related to the BV norm spent by the vector field. However, it is in general necessary to use a final counter-rotation, and this is the main reason for the constant $C_2$.

\smallskip

\subsubsection{From permutations of subsquares to ergodic/exponential mixing}

The advantage of having a flow $X_t$ such that $X_{t=1}$ is a permutations of subsquares is that its action is sufficiently simple to perturb in order to achieve a desired property. Nevertheless it requires some smart constructions, since in any case we need to control the $L^1$-distance and the BV norm.

\smallskip

\noindent The first step is to perturb a permutation of subsquares into a cyclic permutation of subsquares, i.e. a permutation made of a single cycle: this is clearly a necessary condition for ergodicity. Roughly speaking, the idea is to exchange two adjacent subsquares belonging to different cycles in order to merge them. We do this operation in two steps. \\
In Lemma \ref{lem:cycl:1} it is shown that one can arbitrarily refine the grid $\N \times \N \frac{1}{D}$ into $\N \times \N \frac{1}{DM}$ so that each cycle of length $k$ in the original grid becomes a cycle of length $k M^2$ in the new one. Moreover the perturbation is going to $0$ in $L^\infty_t L^1_x$ as $M \to \infty$ and its $L^\infty \BV_x$ is arbitrarily small when $D$ is large. \\ 
The above result allows now to exchange sets of size $(DM)^{-1}$ when merging cycles: this is done in Proposition \ref{unique:cycle}. This proposition faces a new problem: in the previous case the exchange of subsquares of size $(DM)^{-1}$ occurs within the same subsquare of size $D^{-1}$: the latter is only deformed during the evolution and hence the merging can be done in the whole time interval $[0,1]$. In the case of Proposition \ref{unique:cycle}, instead, we are exchanging subsquares of size $(DM)^{-1}$ which are then shifted away during the flow, since they belong to different subquares of the grid $D^{-1}$. This requires to do the exchange sufficiently fast (i.e. during the time where they share a common boundary, Remark \ref{Rem:better_mixing}), or to freeze the evolution for an interval of time $[0,\delta]$ and perform the exchanges here and then let the flow permuting the subsquares to evolve during the remaining time interval $[\delta,1]$. We choose for simplicity this second line, being easier and not changing the final result: notice however that now the constant $M$ plays the role of controlling the constant $\delta^{-1}$, appearing because the exchange action occurs in the time interval $[0,\delta]$.

\smallskip

\noindent Once we have a cyclic permutation of subsquares, the perturbation to get an ergodic vector field is straighforward: indeed as a map of the quotient space $\Omega$ (defined as in \eqref{Equa:Omega_intro} for the grid $\N \times \N \frac{1}{DM}$) it is already ergodic, hence one has only to add an ergodic vector field inside each subsquare: this is done in Proposition \ref{Prop:ergod_dense}, where the universal mixer $U$ given by \ref{Equa:univers_U_intro} is used inside a single subsquare. In particular, we can spread its action on the whole time interval $[0,1]$, and the smallness estimates are optimal.

\smallskip

\noindent To achieve the exponential mixing, instead, we need to transfer mass across different subsquares, and hence we face again the problem of Proposition \ref{unique:cycle} above: we let the mixing action occurs in an interval of time where the evolution is frozen, and then let the cyclic permutation to act in the time interval $[\delta,1]$ (see also Remark \ref{Rem:as_before}). The idea is again to use the universal mixer \eqref{Equa:univers_U_intro} to exchange mass across to nearby subsquares. The additional difficulty here is that in order to avoid resonant phenomena we mix all squares with 2 neighboring ones, so that by simple computations the Markov Shift obtained through this map is exponentially mixing, Proposition \ref{Prop:smix_dense}.

\smallskip

\noindent To collect all the above results into a proof of Point \eqref{Point5:main_intro_expo} of Theorem \ref{Theo:main_intro} is not difficult at this point, and we devote a section (Section \ref{Ss:proof_main_intro} and Corollary \ref{Cor:strong_mix_dense}) to shows how to merge these result and get the desired statement.

\subsection{Plan of the paper} 

The paper is organized as follows. 

\smallskip

\noindent In Section \ref{S:preliminaries}, after listing some of the notation used in the paper, we give a short overview on BV functions (Section \ref{Ss:BV_funct}) and Regular Lagrangian Flows (Section \ref{sect:RLF}), proving the extension of the continuous dependence to a complete set $\mathcal U$ in Proposition \ref{prop:gen} (providing the proof of Point (\ref{Point1:main_intro_Phi}) of Theorem \ref{Theo:main_intro}) and stating some technical estimates on composition of maps (Theorem \ref{change of variables} and \eqref{Equa:comp_ff}, \eqref{Equa:formula_comp}) and on the vector field \eqref{rot:flow:square} generating a rotation (Lemma \ref{TV:rotations}).

\smallskip

\noindent In Section \ref{S:ergodic} we collect some classical results in Ergodic Theory which are needed for Theorem \ref{Theo:main_intro}, and also give the proof of the $G_\delta$-properties of the set of ergodic/weakly mixing vector fields of Theorem \ref{Theo:main_intro}. First we introduce the basic definitions, then in Section \ref{topology} we clarify the relation with the neighborhood topology and the $L^1$-topology used in Theorem \ref{Theo:main_intro}. \\
In Section \ref{S:gen_weak_mix} we restate in our setting the well known fact that weakly mixing are a $G_\delta$-set, as well as the first category property of strongly mixing vector fields (Proposition \ref{Prop:weak_mix_G_delta}). The proof of the remaining parts of Theorem \ref{Theo:main_intro} is a corollary of the previous statement (Corollary \ref{Cor:strong_mix_dense}), if we know that the strongly mixing vector fields are dense. \\
The construction of exponential mixing vector fields is based on the analysis of Markov Shift: in Section \ref{subS:markov} we give the results which are linked to our construction.

\smallskip

\noindent In Section \ref{S:density} we present the proof of the the density of exponentially mixing vector fields, under the assumption that permutation flows are dense in $L^1_t \BV_x$ w.r.t. the $L^1_{t,x}$-norm. We decide to put first this construction because it is in some sense independent on the proof of the density of permutation flows: the idea is that different functional settings can be studied by changing this last part (i.e. the density of permutation flow), while keeping the construction of approximation by permutations more or less the same. \\
The first statement is Lemma \ref{lem:cycl:1} which allows to partition the subsquares of a given cycle into smaller subquares still belonging to the same cycle: as we already said, its proof is based on a doubling scale approach, merging larger and larger cycles. The usefullness of this estimate is shown in Proposition \ref{union:subrectangles}, where we need to exchange mass only on an area which is of order $M^{-2}$, and hence obtaining that the perturbation is small in $L^1_{t,x}$ and $L^\infty_t BV_x$ (Proposition \ref{unique:cycle_star} of Remark \ref{Rem:better_mixing} addresses the problem of exchanging two subquares during the evolution, a refinement not needed for the proof of Theorem \ref{Theo:main_intro}). \\
The last two subsection address the density of ergodic vector fields (Proposition \ref{Prop:ergod_dense}) and of exponentially mixing vector fields (Proposition \ref{Prop:smix_dense}): the basic idea is the same (i.e. perturb the cyclic permutation) but the strongly mixing case requires more effort and uses the Markov chains. Section \ref{Ss:proof_main_intro} shows at this point how the assumptions of Corollary \ref{Cor:strong_mix_dense} are verified, concluding the proof of Theorem \ref{Theo:main_intro} under the assumption of the density of vector fields whose flow is a permutation of subsquares.

\smallskip

\noindent The last section, Section \ref{S:permutation}, proves the cornerstone approximation result, i.e. the density of vector fields whose flow at $t=1$ is a permutation of subsquares, Theorem \ref{main result} (whose statement is the same of Theorem \ref{main result_intro}). \\
In Section \ref{Ss>pert_flow} we approximate a smooth flow close to identity with a BV flow which is locally affine in subrectangles: Lemma \ref{lem: dyadic} considers the 2d-case as in \cite{Shnirelman}, while the needed variations for the $d$-dimensional case are in Section \ref{Sss:d-dim}. \\
The BV estimates for such perturbed flow are studied in Section \ref{Ss>BV_pertu}. A preliminary result (Lemma \ref{Lem:zeta_map_X}) takes care of the conditions that the area of the subsquares has to be a dyadic rational, while the key estimates are in Lemma \ref{BV:estim:pert:flow}: an important fact is that as the grid becomes finer the perturbation becomes smaller. \\
An ingredient for obtaining a flow which is a permutation of subsquares is the use of rotations: in Section \ref{Ss:BV_rot} we study these elementary transformations. \\
The main approximation theorem, Theorem \ref{Prop:piececlo}, is stated and proved in Section \ref{Ss:main_appro_th}. Its proof uses all the ingredients of the previous sections, and an additional argument on how to encapsulate rotations in order to control the total variation.





\section{Preliminaries and notations}
\label{S:preliminaries}

First, a list of standard notations used throughout this paper.

\begin{itemize}
\item $\Omega\subset \R^n$ denotes in general an open set; $\mathcal{B}(\Omega)$ denotes the $\sigma$-algebra of Borel sets of $\Omega$; 
\item  $\text{dist}(x, A) $ is the distance of $x$ from the set $A\subset \Omega$, defined as the infimum
of $|x-y|$ as $y$ varies in $A$;
\item  $\forall A\subset \Omega$, $\mathring{A}$ denotes the interior of $A$ and $\partial A$ its boundary, moreover, if $\epsilon>0$, then $A^\epsilon$ is the $\epsilon$-neighbourhood of $A$, that is
    \begin{equation*}
        A^\epsilon=\lbrace x\in \Omega: \text{dist}(x,\partial A)\leq\epsilon\rbrace;
    \end{equation*}
    \item $\M_b(\Omega)$ bounded Radon measures;
    \item if $\nu\in\M_b(\Omega)$ then $\|\nu\|$ denotes its total variation;
    \item $\BV(\Omega)$ is the set of functions with bounded variation, and if $u\in\BV(\Omega)$ we will use instead $\TV(u)$ to denote $\|Du\|$;
    \item $\L^d$ denotes the $d$-dimensional Lebesgue measure on $\R^d$, and $\H^k$ the $k$-dimensional Hausdorff measure;
    \item $K=[0,1]^2$ is the unit square;
    \item  $\L^2\llcorner_K$ denotes the normalized Lebesgue measure on $K$;
    \item let $b:[0,1]\times \R^2\rightarrow \R^2$, and let $t,s\in[0,1]$ then we denote by $X(t,s,x)$ a solution of
    \begin{equation*}
        \begin{cases}
         \dot x(t)=b(t,x(t)) \\
         x(s)=x,
        \end{cases}
    \end{equation*}
    moreover we will use $X(t)(x)$ or (alternatively $X_t(x)$) for $X(t,0,x)$ (in our setting as a flow the function $X(t,s,x)$ is unique a.e.);
    \item $(S,\Sigma,\mu)$ denotes a locally compact separable metric space where $\mu$ is a normalized complete measure;
    \item $G(S)$ denotes the space of automorphisms of $S$.
\end{itemize}

\subsection{BV functions}
\label{Ss:BV_funct}

In this subsection we recall some results concerning functions of bounded variation. For a
complete presentation of the topic, see \cite{AFP}. Let  $u\in\BV(\Omega;\R^m)$ and $Du\in\M_b(\Omega)^{n\times m}$ the $n\times m$-valued measure representing its distributional derivative. We recall the decomposition of the measure $Du$ 
\begin{equation*}
    Du=D^{\textrm{cont}}u+D^\textrm{jump}u = D^\textrm{a.c.} u + D^\mathrm{cantor} u + D^\mathrm{jump} u,
\end{equation*}
where $D^\mathrm{cont} u, D^\mathrm{a.c.} u, D^\mathrm{cantor} u, D^\mathrm{jump}u$  are respectively  the continuous part, the absolutely continuous part, the Cantor part and the jump part of the measure. 
We also recall that for $u\in BV(\Omega)$ the following estimate on the translation holds: for every $C\subset\Omega$ compact and $z\in\R^n$ such that $|z|\leq \dist(C,\partial\Omega)$
\begin{equation}
\label{est:translat}
    \int_C |u(x+z)-u(x)|dx\leq \left| \sum_{i=1}^n z_iD_iu\right|(C^{|z|}).
\end{equation}

\subsection{Regular Lagrangian Flows}\label{sect:RLF}
Throughout the paper we will consider divergence-free vector fields $b:[0,1]\times K\rightarrow \R^2$ in the space $L^\infty([0,1];\BV(K))$ (in short $b\in L^\infty_t\BV_x$) such that $\supp(b_t)\subset\subset\mathring{K}$ for $\L^1$-a.e. $t\in [0,1]$: it is standard to extend the analysis to divergence-free BV-vector fields in $\R^2$ with support in $K$. When the velocity field $b$ is Lipschitz, then its \emph{flow} is well-defined in the classical sense, indeed it is the map $X:[0,1]\times K\rightarrow K$ satisfying
\begin{equation*}
    \begin{cases}
    \frac{d}{dt} X_t(x)=b(t, X_t(x)); & \\
    X_0(x)=x.
    \end{cases}
\end{equation*}
\noindent But when we allow the velocity fields to be discontinuous (as in our case $\BV$ regular in space) we can still give a notion of a flow (namely the \emph{Regular Lagrangian Flow}). These flows have the advantage to allow rigid \emph{cut and paste} motions, since they do not preserve the property of a set to be connected. More in detail, we give the following
\begin{definition}
    Let $b\in L^1([0,1]\times \R^2; \R^2)$. A map $X:[0,1]\times \R^2\rightarrow \R^2$ is a \emph{Regular Lagrangian Flow} (RLF) for the vector field $b$ if
    \begin{enumerate}
        \item for a.e. $x\in \R^2$ the map $t\rightarrow X_t(x)$ is an absolutely continuous integral solution of
        \begin{equation*}
            \begin{cases}
    \frac{d}{dt} x(t)=b(t, x(t)); & \\
    x(0)=x.
    \end{cases}
        \end{equation*}
        \item there exists a positive constant $C$ independent of $t$ such that
        \begin{equation*}
            \L^2(X_t^{-1}(A))\leq C\L^2(A),\quad\forall A\in\mathcal{B}(\R^2).
        \end{equation*}
    \end{enumerate}
\end{definition}
\noindent   DiPerna and Lions proved existence, uniqueness and stability for Sobolev vector fields with bounded divergence \cite{DiPerna:Lions}, while the extension to the case of $\BV$ vector fields with divergence in $L^1$ has been done by Ambrosio in \cite{Ambrosio:BV}. When dealing with divergence-free vector fields $b$ the unique Regular Lagrangian Flow  $t\rightarrow X_t$ associated with $b$ is a flow of measure-preserving maps, main objects of investigations in Ergodic Theory. In the sequel we will build flows of measure-preserving maps originating from divergence-free vector fields; more precisely, if a flow $X:[0,1]\times K\rightarrow K$ is invertible, measure-preserving for $\mathcal{L}^1$-a.e. $t$ and the map $t\rightarrow X_t$ is differentiable for $ \mathcal{L}^1$-a.e. $t$ and  $\dot{X}_t\in L^1(K)$, then the \emph{vector field associated with $X_t$} is the divergence-free vector field defined by
	\begin{equation*}
	b_t(x)=b(t,x)=\dot{X}_t(X^{-1}_t(x)).
	\end{equation*}

\begin{theorem}[Stability, Theorem 6.3 \cite{Ambrosio:Luminy}]
		Let   $b_n,b\in L^\infty([0,1],\BV(K))$ be divergence-free vector fields and let $X^n,X$ be the corresponding Regular Lagrangian Flows. Assume that 
		\begin{equation*}
			||b_n-b||_{L^1_{t,x}}\to 0\quad\text{as }n\to\infty,
		\end{equation*}
		then
		\begin{equation*}
			\lim_{n\to\infty}\int_{K} \sup_{t\in[0,1]} |X^n_t(x)-X_t(x)|dx =0.
		\end{equation*}
	\end{theorem}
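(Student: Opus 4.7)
The plan is to follow Ambrosio's \emph{superposition} strategy: lift each flow to a probability measure on the path space, extract a narrow limit by compactness, and identify the limit using the uniqueness of the RLF for divergence-free BV drifts. This reduces stability to the already-available uniqueness theorem for bounded distributional solutions of the continuity equation.

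First I would define the lifted measures
\[
\eta^n := (\Id \times X^n)_\#(\L^2 \rest K) \in \mathcal{P}\bigl(K \times C([0,1];K)\bigr),
\]
where $x \mapsto (x, X^n(\cdot,x))$ is viewed as valued in $K\times C([0,1];K)$. Since the $b_n$ are divergence-free, $X^n_t$ is measure-preserving, so the time-marginals $(e_t)_\#\eta^n$ all coincide with $\L^2\rest K$. Combined with the uniform a priori control $\sup_n \|b_n\|_{L^1_{t,x}} < \infty$ (inherited from $L^1$-convergence) and the resulting equi-absolute-continuity of the paths $t \mapsto X^n_t(x)$ in the $L^1_t$-sense, this gives tightness of $\{\eta^n\}$; one extracts a subsequence $\eta^{n_k} \rightharpoonup \eta$ narrowly on $K \times C([0,1];K)$.

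The key identification step is to prove $\eta = (\Id \times X)_\#(\L^2\rest K)$, where $X$ is the RLF of $b$. The $\eta^n$ are concentrated on pairs $(x,\gamma)$ satisfying $\gamma(t) = x + \int_0^t b_n(s,\gamma(s))\,ds$, and passing to the limit in this identity shows that $\eta$ is concentrated on integral curves of $b$. The crucial estimate is
\[
\int \bigg|\int_0^t (b_n - b)(s,\gamma(s))\,ds \bigg| d\eta^n \leq \int_0^t \int_K |b_n - b|(s,y)\,dy\,ds = \|b_n - b\|_{L^1_{t,x}} \to 0,
\]
which uses crucially $(e_s)_\# \eta^n = \L^2 \rest K$. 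Since $(e_t)_\# \eta = \L^2\rest K$ by Portmanteau, Ambrosio's superposition/uniqueness principle for the continuity equation with divergence-free BV drift forces $\eta$ to coincide with the unique measure-preserving lift of the RLF $X$; in particular the whole original sequence $\eta^n$ converges narrowly to $\eta$.

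Finally, I would extract the stated $L^1$ convergence by testing the narrow convergence against bounded continuous functionals of the form $F_{\widetilde X}(x,\gamma) := 1 \wedge \sup_{t\in[0,1]} |\gamma(t) - \widetilde X_t(x)|$, where $\widetilde X$ is a continuous modification of $X$ furnished by Lusin's theorem on a set of $\L^2$-measure $> 1 - \epsilon$. Narrow convergence gives $\int F_{\widetilde X}\, d\eta^{n} \to 0$, and letting $\epsilon \to 0$ yields $\int_K \bigl(1 \wedge \sup_t |X^{n}_t(x) - X_t(x)|\bigr)\,dx \to 0$; the truncation with $1 \wedge$ can then be dropped by dominated convergence, the integrand being uniformly bounded by $\diam K$. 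The main obstacle is the identification of $\eta$: it relies essentially on the renormalization property for BV divergence-free fields, i.e.\ Ambrosio's refinement of the DiPerna--Lions commutator estimate via anisotropic mollifiers, together with Alberti's rank-one structure theorem to handle the singular part of $Db$. All other ingredients (tightness, passage to the limit in the ODE, extraction of $L^1$ convergence from narrow convergence) are standard once uniqueness is in hand.
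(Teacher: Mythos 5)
The paper does not prove this statement: it is quoted verbatim as Theorem 6.3 of Ambrosio's Luminy notes, so there is no internal proof to compare against. Your superposition argument (lift to $\eta^n=(\Id\times X^n)_\#\mathcal L^2\llcorner_K$, tightness, identification of the narrow limit via uniqueness of bounded solutions of the continuity equation for divergence-free $\BV$ fields, then extraction of $\int_K 1\wedge\sup_t|X^n_t-X_t|\,dx\to 0$ through a Lusin--Tietze modification of $X$) is precisely the Lagrangian proof given in that reference, and the final truncation removal is harmless here because all flows take values in the bounded set $K$. So the strategy is the right one and essentially all steps are sound.

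One justification, however, is too thin as written: tightness of $\{\eta^n\}$ in $\mathcal P(K\times C([0,1];K))$ does \emph{not} follow from $\sup_n\|b_n\|_{L^1_{t,x}}<\infty$ alone. The bound $\int\int_0^1|\dot\gamma|\,dt\,d\eta^n=\|b_n\|_{L^1_{t,x}}$ only controls the total variation of the paths, and sublevel sets of $\gamma\mapsto\int_0^1|\dot\gamma|\,dt$ are not compact in $C([0,1];K)$ (they are not equicontinuous), so "equi-absolute-continuity in the $L^1_t$-sense" is not enough. The correct input is the \emph{equi-integrability} of $\{|b_n|\}$ on $[0,1]\times K$, which you do have -- not from boundedness but from the assumed convergence $b_n\to b$ in $L^1_{t,x}$ (Vitali). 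By de la Vall\'ee-Poussin choose a superlinear convex $\psi$ with $\sup_n\int\int\psi(|b_n|)\,dx\,dt<\infty$; measure preservation gives $\int\big(\int_0^1\psi(|\dot\gamma|)\,dt\big)\,d\eta^n=\int\int\psi(|b_n|)\,dx\,dt$, and the sublevels of $\gamma\mapsto\int_0^1\psi(|\dot\gamma|)\,dt$ \emph{are} compact in $C([0,1];K)$ by Ascoli--Arzel\`a, since superlinearity yields a uniform modulus of continuity. With this repair (and the standard double approximation of $b$ by continuous fields in the identification step, where measure preservation of the marginals of both $\eta^n$ and $\eta$ is used again), your proof is complete; note also that, exactly as in your argument, no uniform bound on $\|\TV(b_n)\|_\infty$ is required -- the $\BV$ regularity of each $b_n$ serves only to make $X^n$ well defined, and that of the limit $b$ enters only through the renormalization/uniqueness theorem.
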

	
\noindent In this setting we can extend the family of vector fields we consider to a Polish subspace of $L^1_{t,x}$ in which we still have a notion of uniqueness. This extension allows us to apply Baire Category Theorem for the results of genericity that we will give for weakly mixing vector fields. 
\begin{proposition}[Extension] 
\label{prop:gen}
 Let \begin{equation*}
     \Phi:\lbrace b\in  L^\infty_t\BV_x: D\cdot b_t=0\rbrace \subset \{ b \in L^1([0,1],L^1(K)), D\cdot b_t = 0\} \rightarrow C([0,1],L^1(K))
     \end{equation*}
     the map that associates $b$ with its unique Regular Lagrangian Flow $X_t$.  Then $\Phi$ can be extended as a continuous function to a $G_\delta$-set $\mathcal{U} $ containing $\lbrace b\in L^\infty_t\BV_x: D\cdot b_t=0\rbrace$. 
\end{proposition}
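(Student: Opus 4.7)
The plan is to invoke the classical Kuratowski/Lavrentiev extension theorem: any continuous map from a subset of a metric space into a complete metric space admits a continuous extension to a $G_\delta$-set containing the original domain. The Stability Theorem quoted just above supplies the input continuity hypothesis, so the proof reduces to a purely topological exercise. To set the stage, I would first observe that
\begin{equation*}
    \mathcal{X} := \bigl\{ b \in L^1([0,1]; L^1(K)) : D\cdot b_t = 0 \bigr\}
\end{equation*}
is closed in $L^1_{t,x}$ (the distributional divergence-free constraint is $L^1_{t,x}$-closed), hence a Polish space; that the target $\mathcal{Y} := C([0,1]; L^1(K))$ with the norm $\|X\|_\mathcal{Y} := \sup_{t \in [0,1]} \|X_t\|_{L^1(K)}$ is a Banach space, in particular complete; and that the initial domain of $\Phi$ is $\mathcal{D} := \{b \in L^\infty_t \BV_x : D \cdot b_t = 0\} \subset \mathcal{X}$.

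The Stability Theorem is precisely the statement that $\Phi : \mathcal{D} \to \mathcal{Y}$ is continuous with respect to the metric inherited from $\mathcal{X}$: for $b_n, b \in \mathcal{D}$ with $\|b_n - b\|_{L^1_{t,x}} \to 0$, the bound $\sup_{t}\|X^n_t - X_t\|_{L^1(K)} \leq \int_K \sup_t |X^n_t(x) - X_t(x)|\,dx$ forces $\Phi(b_n) \to \Phi(b)$ in $\mathcal{Y}$. I would then produce the extension by the standard oscillation construction: for $b \in \mathcal{X}$ set
\begin{equation*}
    \omega_\Phi(b) := \inf_{\delta > 0} \diam_{\mathcal{Y}}\bigl( \Phi(B_\delta^\mathcal{X}(b) \cap \mathcal{D}) \bigr),
\end{equation*}
with the convention $\diam \emptyset := +\infty$, and let
\begin{equation*}
    \mathcal{U} := \bigcap_{n \in \N} \bigl\{ b \in \mathcal{X} : \omega_\Phi(b) < 1/n \bigr\}.
\end{equation*}
Each level set $\{\omega_\Phi < 1/n\}$ is open in $\mathcal{X}$ directly from the definition of $\omega_\Phi$ (if a witness radius $\delta$ works at $b$, then a smaller radius works at any nearby point), so $\mathcal{U}$ is a $G_\delta$-subset of $\mathcal{X}$; the continuity of $\Phi$ on $\mathcal{D}$ gives $\omega_\Phi \equiv 0$ on $\mathcal{D}$, so $\mathcal{D} \subset \mathcal{U}$.

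For every $b \in \mathcal{U}$ the condition $\omega_\Phi(b) = 0$ forces $B_\delta^\mathcal{X}(b) \cap \mathcal{D} \neq \emptyset$ for all $\delta > 0$; picking any $b_n \in \mathcal{D}$ with $b_n \to b$, the vanishing of $\omega_\Phi(b)$ makes $(\Phi(b_n))$ Cauchy in $\mathcal{Y}$, hence convergent by completeness to a limit independent of the chosen sequence, which I would take as the definition of $\tilde\Phi(b)$. Continuity of $\tilde\Phi$ on $\mathcal{U}$ then follows by a routine $\varepsilon/3$ argument using $\omega_\Phi \equiv 0$ on $\mathcal{U}$. I do not foresee any serious obstacle: the whole topological part is standard and the only substantive input is the Stability Theorem of Ambrosio. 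The construction also clarifies in what sense $\tilde\Phi(b)$ deserves to be called the Regular Lagrangian Flow of $b \in \mathcal{U} \setminus \mathcal{D}$, namely it is the unique $L^1_{t,x}$-stable Lagrangian limit of RLFs generated by approximating $\BV$ vector fields, as already announced in the introduction.
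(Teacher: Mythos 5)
Your proposal is correct and follows essentially the same route as the paper: the paper cites the abstract extension theorem (Proposition 2.2.3 of Srivastava) for continuous maps into complete metric spaces and verifies the continuity of $\Phi$ via the Stability Theorem with the same bound $\sup_t\int_K|X_t^n-X_t|\,dx\le\int_K\sup_t|X_t^n-X_t|\,dx$. Your oscillation construction of $\mathcal U$ is just the standard proof of that cited extension theorem written out explicitly, so there is no substantive difference.
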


This proposition proves Point (\ref{Point1:main_intro_Phi}) of Theorem \ref{Theo:main_intro}.

\begin{proof}
We recall that for every $f:A\rightarrow Z$ continuous where $A\subset W$ is metrizable and $Z$ is a complete metric space, there exists a $G_\delta$-set $A\subset G$ and a continuous extension $\tilde{f}:G\rightarrow Z$ (Proposition 2.2.3, \cite{Srivastava}). Thus we have to prove the continuity of the map $\Phi$ which follows by
\begin{equation*}
\begin{split}
\|\Phi(b^n) - \Phi(b)\|_{C_t L^1_x} &= \sup_{t \in [0,1]} \int_K \big| X_t^n(x) - X_t(x) \big| dx \\
&\leq \int_K \sup_{t \in [0,1]} \big| X_t^n(x) - X_t(x) \big| dx.
\end{split}
\end{equation*}
This concludes the proof.
%
%
%
\end{proof}

\noindent We will also use the following tools to prove the main approximation theorems of the paper. The first one gives a rule to compute the total variation of the composition of vector fields, while the second one is a direct computation of the cost, in terms of the total variation of the vector field whose flow rotates rectangles. 

\begin{theorem}[Change of variables, Theorem 3.16, \cite{AFP}]
	\label{change of variables}
	Let $\Omega,\Omega'$ two open subsets of $\R^n$ and let $\phi:\Omega\rightarrow \Omega'$ invertible with Lipschitz inverse, then $\forall u\in BV(\Omega')$ the function $v=u\circ\phi$ belongs to $BV(\Omega)$ and 
	\begin{equation*}
	\TV(v)\leq \Lip(\phi^{-1})^{n-1} \TV(u).
	\end{equation*}
\end{theorem}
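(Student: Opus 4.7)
\emph{Proof plan.} The strategy is to reduce the $\BV$ inequality to a perimeter estimate via the coarea formula, and then derive the perimeter bound from how Lipschitz maps control Hausdorff $(n-1)$-measure. Throughout, set $\psi = \phi^{-1} : \Omega' \to \Omega$ and $L = \Lip(\psi) = \Lip(\phi^{-1})$. The first paragraph below reduces the claim to sets; the second establishes the set-version; the third identifies the delicate point.

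\emph{Reduction to a perimeter inequality.} By the coarea formula for $\BV$ functions,
\begin{equation*}
\TV(u) = \int_\R P(\{u > t\}, \Omega')\, dt,
\end{equation*}
and since $\phi$ is a homeomorphism, the superlevel sets of $v = u \circ \phi$ satisfy $\{v > t\} = \psi(\{u > t\})$. Hence, once we have
\begin{equation}
\label{Equa:per_ineq_plan}
P(\psi(E), \Omega) \leq L^{n-1}\, P(E, \Omega')
\end{equation}
for every $E \subset \Omega'$ of finite perimeter, integrating in $t$ and invoking the coarea formula for the function $v$ will give $v \in \BV(\Omega)$ together with $\TV(v) \leq L^{n-1}\TV(u)$.

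\emph{Proof of \eqref{Equa:per_ineq_plan}.} When $E$ has smooth boundary one has $P(E, \Omega') = \mathcal{H}^{n-1}(\partial E \cap \Omega')$ and $\partial(\psi(E)) = \psi(\partial E)$ since $\psi$ is a homeomorphism; combining this with the standard Lipschitz estimate $\mathcal{H}^{n-1}(\psi(A)) \leq L^{n-1}\mathcal{H}^{n-1}(A)$ yields \eqref{Equa:per_ineq_plan} on smooth sets. For a general $E$ of finite perimeter, approximate $E$ by sets $E_k$ with smooth boundary such that $\mathbf{1}_{E_k} \to \mathbf{1}_E$ in $L^1(\Omega')$ and $P(E_k, \Omega') \to P(E, \Omega')$ (classical density of smooth sets in the strict topology of finite-perimeter sets). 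Since $\psi$ is Lipschitz, $\L^n(\psi(E_k \triangle E)) \leq L^n \L^n(E_k \triangle E) \to 0$, so $\mathbf{1}_{\psi(E_k)} \to \mathbf{1}_{\psi(E)}$ in $L^1(\Omega)$, and lower semicontinuity of the perimeter gives
\begin{equation*}
P(\psi(E), \Omega) \leq \liminf_{k \to \infty} P(\psi(E_k), \Omega) \leq L^{n-1} \liminf_{k \to \infty} P(E_k, \Omega') = L^{n-1} P(E, \Omega'),
\end{equation*}
which is \eqref{Equa:per_ineq_plan}.

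\emph{Main obstacle.} The delicate point is the distinction between the topological and the reduced boundary: the identity $\partial(\psi(E)) = \psi(\partial E)$ uses the topological boundary, but the perimeter is carried by the reduced boundary $\partial^* E$, which in general is strictly smaller and need not satisfy $\partial^*(\psi(E)) = \psi(\partial^* E)$ exactly. The smooth-approximation plus lower-semicontinuity scheme above bypasses this subtlety, at the cost of relying on the density of smooth-boundary sets in the strict topology of finite-perimeter sets; this is the one ingredient that must be invoked with care. With \eqref{Equa:per_ineq_plan} in hand, plugging it into the coarea integral concludes the proof.
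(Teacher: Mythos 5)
The paper does not actually prove this statement: it is quoted verbatim as Theorem 3.16 of \cite{AFP}, so there is no internal proof to compare against, and I assess your argument on its own. The coarea route you take (reduce to a perimeter inequality for superlevel sets, prove that inequality for Lipschitz images, integrate back in $t$) is a legitimate and standard way to obtain the theorem; the reduction step and the approximation-plus-lower-semicontinuity step are both correct as written.

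There is, however, one genuine gap, and it sits exactly where you claim the difficulty has been bypassed. For a set $E$ with smooth boundary, the image $\psi(E)$ is in general \emph{not} smooth: $\psi$ is only Lipschitz, and its inverse $\phi$ is not assumed Lipschitz at all. So you cannot write $P(\psi(E),\Omega)=\mathcal{H}^{n-1}(\partial\psi(E)\cap\Omega)$; the homeomorphism only gives you the topological identity $\partial\psi(E)=\psi(\partial E)$ and hence an upper bound on $\mathcal{H}^{n-1}(\partial\psi(E)\cap\Omega)$. To close the chain you still need the one-sided comparison $P(F,\Omega)\le\mathcal{H}^{n-1}(\partial F\cap\Omega)$ for a set $F=\psi(E)$ whose topological boundary merely has finite $\mathcal{H}^{n-1}$-measure; this is Federer's criterion combined with the structure theorem (e.g.\ Proposition 3.62 in \cite{AFP}), not a triviality. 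In other words, the reduced-versus-topological boundary issue is not eliminated by approximating $E$ with smooth sets: it reappears for the image sets $\psi(E_k)$ and must be resolved by that citation. Two smaller points should also be made explicit: (i) $v=u\circ\phi\in L^1(\Omega)$, which is needed before the coarea formula can certify $v\in\BV(\Omega)$, and follows from the area formula for the injective Lipschitz map $\psi$, giving $\|v\|_{L^1(\Omega)}\le \Lip(\psi)^n\|u\|_{L^1(\Omega')}$; and (ii) the identity $\{v>t\}=\psi(\{u>t\})$ is stable under modification of $u$ on null sets because $\psi$, being Lipschitz, maps $\mathcal{L}^n$-null sets to $\mathcal{L}^n$-null sets. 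With these additions your proof is complete.
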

\begin{corollary}
Let $\Omega,\Omega'\subset\R^n$ open sets where  $\partial\Omega$ is Lipschitz and let $\phi:\bar\Omega\rightarrow \bar \Omega'$ invertible with Lipschitz inverse, then $\forall u\in BV(\R^n)$ the function 
\begin{equation*}
    v=\begin{cases} u \circ \phi & x \in \Omega, \\
    0 & \text{otherwise},
     \end{cases}
\end{equation*}
belongs to $BV(\R^n)$ and 
	\begin{equation*}
\begin{split}
	\TV(v)(\R^n) &\leq \Lip(\phi^{-1})^{n-1} \big( \TV(u)(\Omega') + \|\mathrm{Tr}(u,\partial \Omega')\|_{L^1(\mathcal H^{n-1}\llcorner_{\partial \Omega'})} \big) \\
&= \Lip(\phi^{-1})^{n-1} \TV(u)(\R^n).
\end{split}
	\end{equation*}
\end{corollary}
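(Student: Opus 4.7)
The plan is to split $\TV(v)(\R^n)$ into three disjoint contributions---the interior $\Omega$, the boundary $\partial\Omega$, and the exterior $\R^n\setminus\bar\Omega$---and to estimate each piece separately using Theorem \ref{change of variables} together with classical trace theory for $\BV$ functions on Lipschitz domains.

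First, I would apply Theorem \ref{change of variables} to $u\in\BV(\Omega')$ (viewed as the restriction of $u\in\BV(\R^n)$) and to the bi-Lipschitz map $\phi:\Omega\to\Omega'$, obtaining $u\circ\phi\in\BV(\Omega)$ together with the interior estimate
\begin{equation*}
\TV(v)(\Omega)=\TV(u\circ\phi)(\Omega)\leq \Lip(\phi^{-1})^{n-1}\TV(u)(\Omega').
\end{equation*}
Since $v\equiv 0$ outside $\bar\Omega$, the exterior contribution vanishes, while across $\partial\Omega$ the function $v$ has a pure jump between its interior trace $\mathrm{Tr}(u\circ\phi,\partial\Omega)$ and the value $0$, so that
\begin{equation*}
\TV(v)(\partial\Omega)=\|\mathrm{Tr}(u\circ\phi,\partial\Omega)\|_{L^1(\H^{n-1}\llcorner_{\partial\Omega})}.
\end{equation*}

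Next, I would transfer this boundary integral from $\partial\Omega$ to $\partial\Omega'$ via $\phi$. Because $\phi:\bar\Omega\to\bar\Omega'$ is bi-Lipschitz, its restriction to $\partial\Omega$ is bi-Lipschitz onto $\partial\Omega'$, and the tangential Jacobian of $\phi^{-1}$ appearing in the area formula is bounded pointwise by $\Lip(\phi^{-1})^{n-1}$. Combined with the identification $\mathrm{Tr}(u\circ\phi,\partial\Omega)=\mathrm{Tr}(u,\partial\Omega')\circ\phi$ at $\H^{n-1}$-a.e.\ point of $\partial\Omega$ (which holds because $\phi$ is bi-Lipschitz on the closures and $\partial\Omega$ is Lipschitz), the area formula yields
\begin{equation*}
\|\mathrm{Tr}(u\circ\phi,\partial\Omega)\|_{L^1(\H^{n-1}\llcorner_{\partial\Omega})}\leq \Lip(\phi^{-1})^{n-1}\|\mathrm{Tr}(u,\partial\Omega')\|_{L^1(\H^{n-1}\llcorner_{\partial\Omega'})}.
\end{equation*}
Summing the interior and boundary estimates gives the first inequality of the statement.

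Finally, the equality on the second line is the standard decomposition of $\TV$ on $\R^n$: in the relevant setting where $u$ vanishes outside $\bar\Omega'$ (otherwise only the values of $u$ on $\bar\Omega'$ enter the definition of $v$, and one may reduce to this case), the outer trace of $u$ on $\partial\Omega'$ is zero, $Du$ carries no singular mass in $\R^n\setminus\bar\Omega'$, and $\TV(u)(\R^n)$ splits exactly as $\TV(u)(\Omega')+\|\mathrm{Tr}(u,\partial\Omega')\|_{L^1(\H^{n-1}\llcorner_{\partial\Omega'})}$. The main delicate point I foresee is the rigorous identification of $\mathrm{Tr}(u\circ\phi,\partial\Omega)$ with $\mathrm{Tr}(u,\partial\Omega')\circ\phi$; this is classical under the Lipschitz regularity of the boundaries, but it is the step where the bi-Lipschitz hypothesis on the closures (rather than only on the open sets) is truly used.
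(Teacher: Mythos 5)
Your argument is correct and is exactly the intended one: the paper states this corollary without proof as an immediate consequence of Theorem \ref{change of variables}, and the natural justification is precisely your decomposition (change of variables in $\Omega$, extension-by-zero trace term on $\partial\Omega$, area formula for the Lipschitz map $\phi^{-1}$ on the boundary). You also correctly identify the two genuinely delicate points — the identification $\mathrm{Tr}(u\circ\phi,\partial\Omega)=\mathrm{Tr}(u,\partial\Omega')\circ\phi$ and the fact that the final equality $\TV(u)(\Omega')+\|\mathrm{Tr}(u,\partial\Omega')\|_{L^1}=\TV(u)(\R^n)$ presupposes that $u$ vanishes (and $Du$ carries no mass) outside $\bar\Omega'$, which is the situation in which the corollary is applied throughout the paper.
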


In the following, we have often to study the properties of the vector field $b_3$ associated with the composition $Y_3(t)$ of two smooth measure preserving flows $t \mapsto Y_i(t)$, $i=1,2$, with associated vector fields $b_1,b_2$. By direct computation
\begin{equation*}
    b_3(t,Y_3(t,y)) = \partial_t Y_1(t,Y_2(t,y)) = b_1(t,Y_3(t,y)) + \nabla Y_1(t,Y_2(t,y)) b_2(t,Y_2(t,y)),
\end{equation*}
\begin{equation}
\label{Equa:comp_ff}
\begin{split}
        b_3(t,x) &= b_1(t,x) + \nabla Y_1(t,Y_2(t,Y_3^{-1}(t,x))) b_2(t,Y_2(t,Y_3^{-1}(t,x))) \\
        &= b_1(t,x) + \nabla Y_1(t,Y_1^{-1}(t,x)) b_2(t,Y_1^{-1}(t,x)).
\end{split}
\end{equation}
Hence using Theorem \ref{change of variables} we conclude that (being $Y_1 \circ Y_2$ measure preserving too)
\begin{equation}
\label{Equa:formula_comp}
\begin{split}
    \TV(b_3) &\leq \TV(b_1) + \Lip(Y_1)^{n-1} \TV(DY_1(t) b_2) \\
&\leq \TV(b_1) + \|\nabla Y_1\|^n_\infty \TV(b_2) + \|\nabla Y_1\|^{n-1}_\infty \|b_2\|_\infty \TV(DY(t)).
\end{split}
\end{equation}

\noindent Throughout the paper we will extensively use a flow rotating rectangles and the vector field associated with it.  More precisely we define the \emph{rotation flow} $r_{t}:K\rightarrow K$ for $t\in[0,1]$ in the following way: call
\begin{equation*}
	V(x)=\max \left\{ \left| x_1-\frac{1}{2}\right|, \left| x_2-\frac{1}{2}\right|\right\}^2, \quad (x_1,x_2)\in K.
\end{equation*}
Then the \emph{rotation field} is $r:K\rightarrow \R^2$
\begin{equation}
	\label{rotation field}
	{r}(x)=\nabla V^\perp(x),
\end{equation}
where $\nabla^\perp =(-\partial_{x_2},\partial_{x_1})$ is the orthogonal gradient. Finally the rotation flow $r_{t}$ is the flow of the vector field $r$, i.e. the unique solution to the following ODE system:
\begin{equation}
\label{rot:flow:square}
	\begin{cases} 
	\dot {r}_{t}(x)={r}(r_{t}(x)), \\
	r_{0}(x)=x.
	\end{cases}
\end{equation}
This flow rotates the cube counterclockwise of an angle $\frac{\pi}{2}$ in a unit interval of time.
\begin{lemma}\label{TV:rotations}
Let $R\subset\R^2$ a rectangle of sides $a,b>0$. Consider the rotating flow
\begin{equation*}
    R_{t}=\chi^{-1}\circ r_t\circ \chi,
\end{equation*}
where $\chi:R\rightarrow K$ is the affine map sending $R$ into the unit cube and $r_t$ is the rotation flow defined in \eqref{rot:flow:square}. Let $b_t^R$ the divergence-free vector field associated with $R_t$. Then 
\begin{equation*}
\TV(b^R_t)(\R^2) = 4 a^2 + 4 b^2, \quad \forall t\in [0,1].
\end{equation*}
\end{lemma}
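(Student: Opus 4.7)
The flow $R_t=\chi^{-1}\circ r_t\circ\chi$ is the flow of an \emph{autonomous} vector field, because $\chi$ (and hence $\chi^{-1}$) is affine, so $D\chi^{-1}$ is constant. Differentiating in $t$ and using that $\chi\circ R_t^{-1}=r_t^{-1}\circ\chi$, one gets
\[
b_t^R(x)=\dot R_t(R_t^{-1}(x))=D\chi^{-1}\cdot r(\chi(x))\quad\text{for }x\in R,
\]
and $0$ outside $R$; in particular $b_t^R$ does not depend on $t$, and the estimate reduces to computing $\TV(b^R)(\R^2)$ once. Since both the statement and the Frobenius norm on matrices are invariant under rigid motions, I may assume $R=[-a/2,a/2]\times[-b/2,b/2]$ is axis-aligned and centered at the origin, in which case $\chi(x)=(x_1/a,x_2/b)+(1/2,1/2)$ and $D\chi^{-1}=\diag(a,b)$.

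Because $V$ is piecewise quadratic on the four open triangular sectors of $K$ cut out by the two diagonals of $K$ (on the ``right sector'' $\{y_1-1/2\ge|y_2-1/2|\}$ one has $V=(y_1-1/2)^2$, so $r=(0,2(y_1-1/2))$; and analogously on the top/left/bottom sectors), the pullback $b^R=D\chi^{-1}r(\chi(\cdot))$ is piecewise affine on the four triangles into which the diagonals of $R$ subdivide it, e.g.\
\[
b^R(x)=\bigl(0,\tfrac{2b}{a}x_1\bigr)\ \text{(right triangle)},\qquad b^R(x)=\bigl(-\tfrac{2a}{b}x_2,0\bigr)\ \text{(top triangle)},
\]
and symmetrically on the other two. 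The singular set of $b^R$ is the Lipschitz rectifiable curve $\partial R\cup\{\text{diagonals of }R\}$, so $|Db^R|(\R^2)$ decomposes as an absolutely continuous part on the interiors of the four triangles plus an $\H^1$-jump part along this curve.

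The plan is then to evaluate the three pieces separately: (i) \emph{AC part.} On each triangle $Db^R$ is a constant matrix with exactly one non-zero entry, of Frobenius norm $2b/a$ on the right/left triangles and $2a/b$ on the top/bottom ones; multiplying by the common area $ab/4$ and summing over the four triangles gives $2\cdot b^2/2+2\cdot a^2/2=a^2+b^2$. (ii) \emph{Jump on $\partial R$.} The outer trace vanishes, the inner trace has modulus $b$ on the two vertical edges (of length $b$) and $a$ on the two horizontal edges (of length $a$), contributing $2a^2+2b^2$. (iii) \emph{Jump on the four diagonals of $R$.} Parametrizing for instance the diagonal from $(0,0)$ to $(a/2,b/2)$ as $(as,bs)$ for $s\in[0,1/2]$, the two one-sided traces read $(0,2bs)$ and $(-2as,0)$, so the jump has modulus $2s\sqrt{a^2+b^2}$; integrating against the arclength element $\sqrt{a^2+b^2}\,ds$ gives $(a^2+b^2)/4$ per diagonal, totaling $a^2+b^2$. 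Adding the three contributions yields $(a^2+b^2)+(2a^2+2b^2)+(a^2+b^2)=4a^2+4b^2$, as claimed.

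The computation is elementary; the only point deserving care is the anisotropic scaling identity $Db^R=\diag(a,b)\,(Dr)(\chi)\,\diag(1/a,1/b)$, which deforms the different entries of $Dr$ by different factors and must be used consistently when computing the Frobenius norm of the AC part and the jump traces on the diagonals. No functional-analytic subtlety is involved, because $b^R$ is piecewise affine with Lipschitz singular set, so the AC/jump decomposition captures the whole of $|Db^R|$.
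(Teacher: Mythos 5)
Your proposal is correct and follows the same route as the paper: both write $b^R=D\chi^{-1}\,r(\chi(\cdot))$ explicitly as a piecewise affine (autonomous) field on the four triangles cut out by the diagonals of $R$, and split $\TV$ into the continuous part ($a^2+b^2$) and the jump part ($3a^2+3b^2$, which you further itemize as $2a^2+2b^2$ on $\partial R$ plus $a^2+b^2$ on the diagonals). You have simply carried out in full the "elementary computations" that the paper leaves implicit.
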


\begin{proof}
The potential $V$ generating the rotation of $\pi/2$ in this case is the function
\begin{equation*}
V(x) = \max \bigg\{ \frac{b}{a} \bigg( x_1 - \frac{a}{2} \bigg)^2, \frac{a}{b} \bigg( x_2 - \frac{b}{2} \bigg)^2 \bigg\},
\end{equation*}
where we assume that $R=[0,a] \times [0,b]$, so that the vector field is given by
\begin{equation*}
r(x) = \nabla^\perp V = \begin{cases}
\big( 0, \frac{2b}{a} \big( x_1 - \frac{a}{2} \big) \big) & |x_1| \geq \frac{b}{a} |x_2|, 0 \leq x_1 \leq a, \\
\big( -\frac{2a}{b} \big( x_2 - \frac{b}{2} \big),0 \big) & |x_1| < \frac{b}{a} |x_2|, 0 \leq x_2 \leq b.
\end{cases}
\end{equation*}
Hence by elementary computations
\begin{equation*}
\|D^\cont r\| = a^2 + b^2, \quad \|D^{\mathrm{jump}} r\| = 3 a^2 + 3 b^2,
\end{equation*}
and then we conclude.
    \end{proof}

\section{Ergodic Theory}
\label{S:ergodic}

We will consider flows of divergence-free vector fields from the point of view of Ergodic Theory. Even if we apply the results to the case $(K,\mathcal{B}(K),\L^2\llcorner_K)$ in this section we will give the notions of ergodicity and mixing in more general spaces \cite[Chapter 1]{Ergodic:theory}. More precisely, let $(\Omega,\Sigma,\mu)$ be a locally compact separable metric space where $\mu$ is complete and normalized, that is $\mu(\Omega)=1$. 
\begin{definition}
    An \emph{automorphism} of the measure space $(\Omega,\Sigma,\mu)$ is a one-to-one map $T:\Omega\rightarrow\Omega$ bi-measurable and measure-preserving, that is
    \begin{equation*}
        \mu(A)=\mu(T(A))=\mu(T^{-1}(A)), \quad \forall A\in\Sigma.
    \end{equation*}
\end{definition}

\noindent
We call $G(\Omega)$ the group of automorphisms of the measure space $(\Omega,\Sigma,\mu)$. 
\begin{definition}
A \emph{flow} $\lbrace X_t\rbrace$, $t\in\R$, is a one-parameter group of automorphisms of $(\Omega,\Sigma,\mu)$ such that for every $f:\Omega\rightarrow \R$ measurable, the function $f(X_t(x)) $ is measurable on $\Omega\times\R$.
\end{definition}
\begin{definition}
    Let $T:\Omega\rightarrow\Omega$ an automorphism. Then 
    \begin{itemize}
        \item $T$ is \textbf{\emph{ergodic}} if for every $A\in\Sigma$
        \begin{equation}
        \label{ergodicity}
            T(A)=A \quad \Rightarrow \quad \mu(A)=0 \textrm{ or } \mu(A)=1;
        \end{equation}
        \item $T$ is \textbf{\emph{weakly mixing}} if $\forall A,B\in\Sigma$
        \begin{equation}
        \label{weak:mix}
            \lim_{n\to\infty}\frac{1}{n}\sum_{j=0}^{n-1}\left[\mu(T^{-j}(A)\cap B)-\mu(A)\mu(B)\right]^2=0;
        \end{equation}
        \item $T$ is \textbf{\emph{(strongly) mixing}} if $\forall A,B\in\Sigma$
        \begin{equation}
        \label{mix}
            \lim_{n\to\infty}\mu(T^{-n}(A)\cap B)=\mu(A)\mu(B).
        \end{equation}
    \end{itemize}
    \end{definition}
    \begin{remark}
        It is a well-known and quite elementary fact that strongly mixing $\Rightarrow$ weakly mixing $\Rightarrow$ ergodic.
    \end{remark}
    
    \noindent We can give the analogous definitions for the flow:
    \begin{definition}
    Let $\lbrace X_t\rbrace$ a flow of automorphisms. Then 
    \begin{itemize}
        \item $\lbrace X_t\rbrace$ is \textbf{\emph{ergodic}} if for every $A\in\Sigma$
        \begin{equation}
        \label{f:ergodicity}
            X_t(A)=A \quad \Rightarrow \quad \mu(A)=0 \textrm{ or } \mu(A)=1;
        \end{equation}
        \item $\lbrace X_t\rbrace$ is \textbf{\emph{weakly mixing}} if $\forall A,B\in\Sigma$
        \begin{equation}
        \label{f:weak:mix}
            \lim_{t\to\infty}\dashint_0^t\left[\int_{\Omega}\chi_A(X_{-s}(x))\chi_B(x)d\mu-\mu(A)\mu(B)\right]^2ds=0;
        \end{equation}
        \item $\lbrace X_t\rbrace$ is \textbf{\emph{(strongly) mixing}} if $\forall A,B\in\Sigma$
        \begin{equation}
        \label{f:mix}
           \lim_{t\to\infty} \int_{\Omega}\chi_A(X_{-t}(x))\chi_B(x)d\mu=\mu(A)\mu(B).
        \end{equation}
    \end{itemize}
    \end{definition}

\subsection{The neighbourhood topology as a convergence in measure.}\label{topology}
  To get a genericity result it is necessary to identify the correct topology on $G(\Omega)$. Following the work of Halmos \cite{Halmos:weak:mix} we define the \emph{neighbourhood topology} as the topology generated by the following base of open sets: let $T\in G(\Omega)$ then 
    \begin{equation*}
    N(T)=\lbrace S\in G(\Omega): |T(A_i)\triangle S(A_i)|<\epsilon, \quad i=1,\dots, n\rbrace,
    \end{equation*}
    where $\epsilon>0$ and $A_i\in\Sigma$ are measurable sets.
    
    Since for our purposes we will consider the $L^1$ topology on $G(\Omega)$, we recall the following
   \begin{proposition}
   \label{prop:1}
   	Let $\lbrace{T_n}\rbrace,T\subset G(\Omega)$ and assume that $T_n\rightarrow T$ in measure. Then $T_n\rightarrow T$ in the neighbourhood topology.
   	Conversely, if $T_n\rightarrow T$ in the neighbourhood topology, then $T_n$ converges to $T$ in measure.
   \end{proposition}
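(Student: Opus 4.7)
The plan is to handle the two directions separately, establishing first an auxiliary symmetry lemma which makes the argument essentially symmetric between images and preimages.

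\smallskip

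\textbf{Auxiliary observation.} For any measure-preserving bijection $U:\Omega\to\Omega$ and any measurable $A$, one has $|U(A)\triangle A|=|U^{-1}(A)\triangle A|$. Indeed, applying $U^{-1}$ (which preserves $\mu$) gives $\mu(U(A)\setminus A)=\mu(A\setminus U^{-1}(A))$ and $\mu(A\setminus U(A))=\mu(U^{-1}(A)\setminus A)$. Setting $U=T_n T^{-1}$, one deduces
\[
|T_n(A)\triangle T(A)|=|T_nT^{-1}(T(A))\triangle T(A)|=|(T_nT^{-1})^{-1}(T(A))\triangle T(A)|=|T(A)\triangle T T_n^{-1}(T(A))|,
\]
which, after the bijective relabeling $B=T(A)$, shows that the family $\{|T_n(A)\triangle T(A)|\}_{A\in\Sigma}$ coincides with $\{|T_n^{-1}(B)\triangle T^{-1}(B)|\}_{B\in\Sigma}$. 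Hence $|T_n(A)\triangle T(A)|\to 0$ for every measurable $A$ is equivalent to $|T_n^{-1}(A)\triangle T^{-1}(A)|\to 0$ for every measurable $A$.

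\smallskip

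\textbf{First direction: in-measure $\Rightarrow$ neighbourhood.} Given a basic neighborhood determined by $A_1,\dots,A_k$ and $\epsilon>0$, I reduce to showing $|T_n^{-1}(A_i)\triangle T^{-1}(A_i)|\to 0$, since this quantity equals $\|\chi_{A_i}\circ T_n-\chi_{A_i}\circ T\|_{L^1}$. For a bounded continuous test function $\phi:\Omega\to\R$, the in-measure convergence $T_n\to T$ together with continuity of $\phi$ yields $\phi\circ T_n\to\phi\circ T$ in measure, and boundedness upgrades this to $L^1$ via dominated convergence. Approximating $\chi_A$ in $L^1$ by a continuous $\phi$ (density of $C_b$ in $L^1$) and exploiting measure preservation,
\[
\|\chi_A\circ T_n-\chi_A\circ T\|_{L^1}\le 2\|\chi_A-\phi\|_{L^1}+\|\phi\circ T_n-\phi\circ T\|_{L^1}\xrightarrow{n\to\infty}2\|\chi_A-\phi\|_{L^1},
\]
and letting $\phi$ approach $\chi_A$ gives convergence to $0$. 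Applying this to $A_1,\dots,A_k$ and combining with the auxiliary observation closes the first direction.

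\smallskip

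\textbf{Second direction: neighbourhood $\Rightarrow$ in-measure.} Fix $\delta>0$. Choose a finite measurable partition $\{B_1,\dots,B_k\}$ of $\Omega$ (e.g.\ restricted to a compact set carrying most of the mass, up to an error of size $\eta$) with $\diam(B_j)<\delta$; this uses separability and local compactness. If $T(x)$ and $T_n(x)$ lie in the same $B_j$, then $|T_n(x)-T(x)|<\delta$. Therefore
\[
\{x:|T_n(x)-T(x)|\ge\delta\}\subseteq\bigcup_{j=1}^k\bigl(T^{-1}(B_j)\setminus T_n^{-1}(B_j)\bigr)\subseteq\bigcup_{j=1}^k T^{-1}(B_j)\triangle T_n^{-1}(B_j).
\]
By the neighbourhood convergence, applied to the finite family $\{B_j\}$ together with the auxiliary observation, each $|T^{-1}(B_j)\triangle T_n^{-1}(B_j)|\to 0$, and summing over the finitely many $j$ gives $\mu\{|T_n-T|\ge\delta\}\to 0$.

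\smallskip

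\textbf{Main obstacle.} The only nontrivial point is the equivalence between the behaviour of $|T_n(A)\triangle T(A)|$ and $|T_n^{-1}(A)\triangle T^{-1}(A)|$, since the chosen base of neighbourhoods involves only the former while the natural quantity produced by $L^1$-estimates on $\chi_A\circ T_n$ is the latter. The auxiliary observation resolves this cleanly, at which point both directions reduce to routine approximation and partitioning arguments.
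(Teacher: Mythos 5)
Your proof is correct, and it takes a genuinely different route from the paper's. The paper (in a proof that is actually commented out of the source, so the result is effectively recalled as known) handles the direction ``in measure $\Rightarrow$ neighbourhood'' by Lusin's theorem: it replaces $B$ by a compact $K\subset B$ on which $T$ is continuous, uses compactness of $T(K)$ to find a slightly larger open set $O=T(K)^\delta$, and then controls $\mu(T_n(K)\triangle T(K))$ directly from $\mu(\{x\in K: d(T_n(x),T(x))>\delta/2\})$; the converse direction is only asserted there. You instead first prove the symmetry identity $\mu(U(A)\triangle A)=\mu(U^{-1}(A)\triangle A)$ for measure-preserving bijections, which converts the forward-image quantities in the definition of the neighbourhood base into the preimage quantities $\mu(T_n^{-1}(A)\triangle T^{-1}(A))=\|\chi_A\circ T_n-\chi_A\circ T\|_{L^1}$, and then run a standard $L^1$-approximation of $\chi_A$ by bounded continuous functions (composition with $T_n$ preserves the $L^1$-norm, and $\phi\circ T_n\to\phi\circ T$ in measure follows from the a.e.-convergent-subsequence characterization, so no uniform continuity is needed). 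This avoids Lusin's theorem entirely and, more importantly, your symmetry lemma is exactly what makes the converse direction work: the diameter-$\delta$ partition argument naturally produces preimage symmetric differences, and without the lemma one could not feed the neighbourhood-topology hypothesis (stated for images) into it. The only points worth making explicit in a final write-up are the subsequence argument behind $\phi\circ T_n\to\phi\circ T$ in measure and the extra error of size $\eta$ (coming from the mass outside the compact set, controlled uniformly in $n$ by measure preservation of $T$ and $T_n$) in the displayed inclusion of the second direction; neither is a gap.
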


\noindent Since in our case $\Omega$ is a compact set, then the convergence in measure is equivalent to the convergence in $L^1$: hence we will use the $L^1$ topology for maps as in Proposition \ref{prop:gen}.

\smallskip

\noindent 
We will be concerned with flows of vector fields extended periodically to the real line, that is $b(t+1)=b(t)$.
 Even if $X_t$ is not a flow of automoprhisms, the quantities in the r.h.s. of \eqref{f:weak:mix},\eqref{f:mix} can be computed and are related to the mixing properties of $T = X_{t=1}$. Also the ergodic properties of $T = X_1$ are equivalent to an ergodic property of $X_t$.
%

Let $\lbrace X_s\rbrace_{s\in [0,1]}$ be a family of autmorphisms of $\Omega$ such that $s\rightarrow X_s$ is continuous (hence uniformly continuous) with respect to the neighborhood topology of $G(\Omega)$. Let $T = X_{t=1}$ and define
\begin{equation*}
    X_{t} = X_s \circ T^n = T^n \circ X_s, \quad t = n + s, s \in [0,1).
\end{equation*}
  
 \begin{lemma}
  \label{map:to:flow}
  The following holds
  \begin{enumerate}
      \item if $T$ is ergodic then for every set $A \in \Sigma$
      \begin{equation*}
          \dashint_0^t \chi_{X_s(A)} ds \to_{L^1} |A|;
      \end{equation*}
      \item $T$ is weakly mixing iff for every $A,B\in \Sigma$ 
      \begin{equation*}
            \lim_{t\to\infty}\dashint_0^t \big[  |X_{s}(A)\cap B| - |A||B| \big]^2 ds=0;
        \end{equation*}
        \item $T$ is mixing iff $\forall A,B\in\Sigma$
        \begin{equation*}
           \lim_{t\to\infty} \big|  X_{t}(A)\cap B \big| = |A||B|.
        \end{equation*}
  \end{enumerate}
  \end{lemma}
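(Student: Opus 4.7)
The plan is to reduce each of the three statements to a statement about the iterates of the single automorphism $T = X_1$, using the decomposition $X_{n+\sigma} = T^n \circ X_\sigma$ for $\sigma \in [0,1)$. For each pair $A,B \in \Sigma$ set $a_k(\sigma) := |T^k X_\sigma(A) \cap B|$; since $X_\sigma$ is measure-preserving, $|X_\sigma(A)| = |A|$, so $a_k(\sigma) - |A||B|$ is exactly the discrete mixing discrepancy of $T$ applied to the set $X_\sigma(A)$ against $B$. The auxiliary observation that drives everything is that the family $\{a_k\}_{k\in \N}$ is uniformly equicontinuous on $[0,1]$: measure-preservation of $T$ gives
\begin{equation*}
|a_k(\sigma) - a_k(\sigma')| \leq |T^k X_\sigma(A) \triangle T^k X_{\sigma'}(A)| = |X_\sigma(A) \triangle X_{\sigma'}(A)|,
\end{equation*}
and the right-hand side admits a common modulus of continuity in $|\sigma - \sigma'|$ by uniform continuity of $s \mapsto X_s$ in the neighbourhood topology (Proposition \ref{prop:1}).

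For point (1), I would note that $\chi_{X_s(A)}(x) = \chi_A(X_s^{-1}(x))$ and $X_{k+\sigma}^{-1} = X_\sigma^{-1} \circ T^{-k}$, so setting $g(y) := \int_0^1 \chi_A(X_\sigma^{-1}(y))\,d\sigma$ one has, for $t = n+r$ with $r \in [0,1)$,
\begin{equation*}
\int_0^t \chi_{X_s(A)}(x)\,ds = \sum_{k=0}^{n-1} g(T^{-k}(x)) + O(1).
\end{equation*}
Birkhoff's ergodic theorem applied to $T$ with $g \in L^1(\mu)$ delivers $L^1$-convergence of $\tfrac{1}{n}\sum_{k=0}^{n-1} g(T^{-k}\cdot)$ to $\int g\,d\mu = \int_0^1 |X_\sigma^{-1}(A)|\,d\sigma = |A|$, which is the claim.

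For point (2), the same partition of $[0,n]$ into unit intervals gives
\begin{equation*}
\dashint_0^n \big[|X_s(A)\cap B| - |A||B|\big]^2 ds = \frac{1}{n}\sum_{k=0}^{n-1} \int_0^1 [a_k(\sigma) - |A||B|]^2\,d\sigma.
\end{equation*}
The forward direction is just: apply weak mixing of $T$ to each pair $(X_\sigma(A), B)$ to obtain $\tfrac{1}{n}\sum_k [a_k(\sigma)-|A||B|]^2 \to 0$ for every $\sigma$, then invoke bounded convergence in $\sigma$. For the converse I would use the equicontinuity above: given $\epsilon > 0$ pick $\delta > 0$ with $|a_k(\sigma) - a_k(0)| \leq \epsilon$ for all $\sigma \in [0,\delta]$ and all $k$; an elementary manipulation of squares then yields $\delta\,[|T^k(A)\cap B| - |A||B|]^2 \leq \int_0^1 [a_k(\sigma) - |A||B|]^2\,d\sigma + 2\delta\epsilon$, after which smallness of the continuous Cesàro average transfers to smallness of the discrete one, i.e.\ to weak mixing of $T$. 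For point (3), pointwise strong mixing of $T$ gives $a_k(\sigma) \to |A||B|$ for each $\sigma \in [0,1)$; equicontinuity together with compactness of $[0,1]$ upgrades this to uniform convergence on $[0,1]$ by the standard finite-cover argument, and writing $t = n + s_n$ with $s_n \in [0,1)$ this produces $|X_t(A)\cap B| = a_n(s_n) \to |A||B|$. The converse is immediate by taking $t = n \in \N$, since $X_n = T^n$.

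The main subtle point is the converse of (2): the equicontinuity estimate must be leveraged to pass from a time-averaged smallness condition (on the integral in $\sigma$) to pointwise-at-$\sigma=0$ smallness, which requires the elementary but slightly fiddly quadratic comparison sketched above. The rest is standard bookkeeping around the discrete/continuous correspondence $X_{n+\sigma} = T^n \circ X_\sigma$.
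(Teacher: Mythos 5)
Your proposal is correct and follows essentially the same route as the paper's proof: decompose $X_{n+\sigma}=T^n\circ X_\sigma$, apply the discrete ergodic/weak-mixing/mixing property of $T$ to the sets $X_\sigma(A)$, and use the (uniform) continuity of $\sigma\mapsto X_\sigma$ in the neighbourhood topology to pass between the continuous and discrete averages — including the equicontinuity-plus-pointwise-implies-uniform step for point (3) and a near-$\sigma=0$ continuity argument for the converse of (2). The only cosmetic difference is that for point (1) you apply the $L^1$ ergodic theorem to the averaged function $g=\int_0^1\chi_A\circ X_\sigma^{-1}\,d\sigma$ rather than fibre-wise in $\sigma$, which is equivalent.
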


The proof of this lemma is given in Appendix \ref{S:appendix}, since we believe it is standard and not strictly related to our results.

    \begin{definition}
      Let $b\in L^\infty([0,1],\BV(\R^2))$, $\supp b_t \subset K$, be a divergence-free vector field. We will say that $b$ is ergodic (weakly mixing, strongly mixing) if its unique RLF $X_t$ evaluated at $t=1$ is ergodic (respectively weakly mixing, strongly mixing). 
  \end{definition}  
  
  \subsection{Genericity of weakly mixing}
\label{S:gen_weak_mix}

Let $\mathcal{U}$ be the $G_\delta$-subset of $L^1_{t,x}$ where the Regular Lagrangian Flow can be uniquely extended by continuity (Proposition \ref{prop:gen}). The first statement has the same proof of [Theorem 2, \cite{Halmos:weak:mix}] and [Page 77,\cite{Halmos:lectures}]:

\begin{proposition}
\label{Prop:weak_mix_G_delta}
The set of ergodic/weakly-mixing vector fields is a $G_\delta$-set in $\mathcal{U}$, the set of strongly mixing is a first category set.
\end{proposition}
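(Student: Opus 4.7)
The plan is to reduce both assertions to classical facts about automorphisms of $(K,\mathcal{B}(K),\mathcal{L}^2)$ in $G(K)$ (equipped with the neighborhood topology), transported back to $\mathcal{U}$ via the evaluation map $\Phi_1 := \Phi(\cdot)_{t=1}$. As a preliminary step I would record that $\Phi_1 : \mathcal{U} \to G(K)$ is continuous: Proposition \ref{prop:gen} gives continuity of $\Phi$ into $C([0,1];L^1(K))$, evaluation at $t=1$ is continuous into $L^1(K)$, and Proposition \ref{prop:1} converts $L^1$-convergence of maps into convergence in the neighborhood topology of $G(K)$.

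For the $G_\delta$-statement I would then simply invoke the classical theorems of Halmos \cite{Halmos:ergodic, Halmos:weak:mix}: the sets of ergodic and of weakly mixing automorphisms are $G_\delta$-subsets of $G(K)$ in the neighborhood topology. Their $\Phi_1$-preimages are therefore $G_\delta$-subsets of $\mathcal{U}$, which is the required statement.

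For strongly mixing, continuous preimages of first category sets are not in general of first category, so I would follow Rokhlin's strategy and construct a dense $G_\delta$ in $\mathcal{U}$ disjoint from every strongly mixing vector field. Fix $A:=[0,1/2]\times[0,1]$, so that $\mu(A)=1/2$ and $A$ is a union of subsquares of every grid $\mathbb{N}\times\mathbb{N}/D$ with $D$ even, and set
\begin{equation*}
D_A := \Big\{ T \in G(K) : \liminf_{n\to\infty} \big| \mu(T^{-n} A \cap A) - \mu(A) \big| = 0 \Big\}.
\end{equation*}
Since $T \mapsto \mu(T^{-n}A \cap A)$ is continuous in the neighborhood topology for every $n$, the identity
\begin{equation*}
D_A = \bigcap_{k}\bigcap_{m}\bigcup_{n\ge m}\Big\{ T : \big| \mu(T^{-n}A\cap A) - \mu(A) \big| < 1/k \Big\}
\end{equation*}
exhibits $D_A$ as a $G_\delta$ of $G(K)$, so $\Phi_1^{-1}(D_A)$ is a $G_\delta$ of $\mathcal{U}$. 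On the other hand, if $b$ is strongly mixing then $\mu(\Phi_1(b)^{-n}A \cap A) \to \mu(A)^2 = 1/4$ and $|\mu(\Phi_1(b)^{-n}A \cap A) - \mu(A)| \to 1/4 > 0$, so $b \notin \Phi_1^{-1}(D_A)$.

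The heart of the argument is the density of $\Phi_1^{-1}(D_A)$ in $\mathcal{U}$, and here I would appeal to Theorem \ref{riassuntivo}. Since $L^\infty_t\BV_x\cap\{D\cdot b = 0\}$ is $L^1_{t,x}$-dense in $\mathcal{U}$ by mollification (which preserves the divergence-free constraint), Theorem \ref{riassuntivo} provides an $L^1_{t,x}$-dense family in $\mathcal{U}$ of vector fields $b^c$ whose RLF $X^c_{t=1}$ is a $D^2$-cycle on subsquares of $\mathbb{N}\times\mathbb{N}/D$ with $D$ chosen even. For any such $b^c$ one has $(X^c_{t=1})^{D^2} = \mathrm{id}$, so $|\mu((X^c_{t=1})^{-D^2}A \cap A) - \mu(A)| = 0$ and $b^c \in \Phi_1^{-1}(D_A)$; hence $\Phi_1^{-1}(D_A)$ is a dense $G_\delta$ of $\mathcal{U}$ and its complement, which contains all strongly mixing vector fields, is of first category. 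The main obstacle is precisely this density step, since it is the only place where the $\BV$/divergence-free structure intervenes and where the nontrivial approximation Theorem \ref{riassuntivo} is crucially used.
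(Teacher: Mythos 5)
Your proposal is correct and follows essentially the same route as the paper: continuity of $b\mapsto X_{t=1}$ from $\mathcal U$ into $G(K)$ plus Halmos's $G_\delta$ theorems for the first part (the paper merely reproduces Halmos's spectral argument "for convenience" where you cite it), and Rokhlin's argument via the density of periodic permutation flows (Theorem \ref{riassuntivo}/\ref{Prop:piececlo}) for the first-category part. Your set $D_A$ is just the complementary, dense-$G_\delta$ formulation of the paper's $F_\sigma$ set $F$ (you detect returns to $\mu(A)=1/2$ infinitely often, the paper detects eventual confinement near $\mu(A)^2=1/4$), and both hinge on exactly the same density step.
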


\noindent We repeat the proof for convenience only for weakly/strongly mixing, the case for ergodic vector fields is completely analogous \cite{Halmos:ergodic}.

\begin{proof}
Since the map $\tilde{\Phi}(b)(t=1) = T(b)$ defined in Proposition \ref{prop:gen} is continuous from $\mathcal{U}$ into $L^1(K,K)$, it is enough to prove that the set of weakly mixing maps is a $G_\delta$. For simplicity we define a new topology on $G(K)$ that coincides with the neighbourhood topology known as \emph{Von Neumann strong neighbourhood topology}. So take $T\in G(K)$ then it defines a linear operator $T:L^2(K,\C)\rightarrow L^2(K,\C)$ 
\begin{equation*}
    (Tf)(x)=f(Tx) \quad\forall f\in L^2(K,\C)
\end{equation*}
such that $||Tf||_{L^2}=||f||_{L^2}$ (being $T$ measure-preserving). Consider $f_i$ a countable dense subset in $L^2$: a base of open sets in the strong neighbourhood topology is given by
\begin{equation*}
    N(T)=\lbrace S\in G(K): ||Tf_{i}-Sf_{i}||_2\leq\epsilon,\quad i=1,\dots,n\rbrace.
\end{equation*}
Then we define
\begin{equation*}
    E(i,j,m,n) = \big\{ T \in G(K): \big| (T^{n}f_i,f_j)-(f_i,1)(1,f_j) \big| < 2^{-m} \big\},
\end{equation*}
where $(\cdot,\cdot)$ denotes the scalar product in $L^2$. Simply observing that $T\rightarrow (Tf,g)$ is continuous in the strong neighbourhood topology then by Proposition \ref{prop:1} it follows that $E(i,j,n,m)$ is open in $L^1(K,K)$, and then
\begin{equation*}
    G = \bigcap_{i,j,m} \bigcup_n E(i,j,m,n)
\end{equation*}
is a $G_\delta$-set. By the Mixing Theorem [Theorem 2, page 29, \cite{Ergodic:theory}] $G$ coincides with the set of weakly mixing maps in $L^1(K,K)$.  Indeed if $T$ is not mixing, then there exists $f\not =0$ and a complex eigenvalue $\lambda \in \{|z| = 1, z \not= 0,1\}$ such that $Tf=\lambda f$. We can assume that $f$ is orthogonal to the eigenvector $1$, that is $(f,1)=0$, and also that $||f||_{2}=1$. Now choose $i$ such that $||f-f_{i}||\leq \epsilon$ for some $\epsilon$ to be chosen later and take $f_j=f_i$. Then
\begin{align*}
1 &= |(T^{n}f,f)-(f,1)(f,1)| \\
&\leq |(T^{n}f,f)-(T^{n}f,f_i)|+|(T^{n}f,f_i)-(T^{n}f_i,f_i)| + |(T^{n}f_i,f_i)- (f_i,1)(1,f_i)|\\
& \quad + |(f_i,1)(1,f_i)-(f,1)(f_i,1)|+  | (f,1)(1,f_i)- (f,1)(f,1)| \\ 
& \leq 2||f-f_i||_2+2||f_{i}||_2||f-f_i||_2 +|(T^{n}f_i,f_i)- (f_i,1)(1,f_i)|, 
\end{align*}
so since $||f_i||_2\leq 1 +\epsilon$ we get that
\begin{equation*}
    1\leq 2\epsilon + 2(1+\epsilon)\epsilon +|(T^{n}f_i,f_i)- (f_i,1)(1,f_i)|.
\end{equation*}
With the choice of $\epsilon>0$ small enough we get that $\frac{1}{2}\leq |(T^{n}f_i,f_i)- (f_i,1)(1,f_i)|$, that is $T\not\in G$. This concludes the proof of the first part of the statement.

\noindent We next prove that the set of strongly mixing vector fields is a first category set. Let $A\subset K$ be a measurable set such that $|A|=\frac{1}{2}$. Then define the $F_\sigma$-set
\begin{equation*}
    F=\bigcup_n \bigcap_{k>n}\left\lbrace T\in G(K): \left| |(T^{-k}(A)\cap A)|-\frac{1}{4}\right|\leq \frac{1}{5}\right\rbrace.
\end{equation*}
Clearly strongly mixing maps are contained in $F$ by definition and therefore strongly mixing vector fields are contained in $\tilde{F}=\tilde{\Phi}^{-1}(t=1)(F)$. This $\tilde{F}$ is a set of first category: indeed consider the set
\begin{equation}
\label{good:set}
\bigcup_{k>n} \tilde{\Phi}^{-1}(t=1)\left(\left\lbrace T\in G(K): \left| |(T^{-k}(A)\cap A)|-\frac{1}{4}\right|\leq \frac{1}{5}\right\rbrace\right)^c\comme{.}
\end{equation}
By our main result (Theorem \ref{Prop:piececlo}) $\forall b\in \mathcal{U}$ for all $n\in\N$ there exists $k>n$ and $b^p\in L^\infty_t(\BV_x)$ such that the RLF $X^p_{t=1}$ associated with $b^p$ evaluated at $t=1$ is a permutation of subsquares of period $k$. Hence
$$\bigcup_{k>n}\lbrace b\in L^\infty_t(\BV_x) \text{ permutation of period } k \rbrace$$
is dense and contained in \eqref{good:set}, so that we conclude that \eqref{good:set} is open and dense for all $n$, i.e. $F$ is of first category.
\end{proof}

\begin{corollary}
\label{Cor:strong_mix_dense}
Assume that the set
\begin{equation*}
    SM = \big\{ b \in \mathcal{U} : b \ \text{is strongly mixing} \big\}
\end{equation*}
is dense in $\mathcal{U}$. Then the set of weakly mixing vector fields is residual.
\end{corollary}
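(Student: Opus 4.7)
The plan is to combine three facts already established in the excerpt: the trivial inclusion $SM \subseteq WM$ (strongly mixing implies weakly mixing, noted in the Remark after the mixing definitions), the $G_\delta$-property of the weakly mixing set from Proposition \ref{Prop:weak_mix_G_delta}, and the Baire category of $\mathcal{U}$, which is a $G_\delta$-subset of the Polish space $L^1([0,1];L^1(K))$ by Proposition \ref{prop:gen}. Out of these three ingredients the conclusion is essentially formal.

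First I would observe that, since every strongly mixing automorphism is weakly mixing, the hypothesis that $SM$ is dense in $\mathcal{U}$ immediately implies that the set $WM \subseteq \mathcal{U}$ of weakly mixing vector fields is dense in $\mathcal{U}$. Next, Proposition \ref{Prop:weak_mix_G_delta} tells us that $WM$ is a $G_\delta$-subset of $\mathcal{U}$. Since $\mathcal{U}$ is a $G_\delta$-set in the Polish space $L^1_{t,x}$, it inherits a Polish topology and is in particular a Baire space; hence any dense $G_\delta$-subset of $\mathcal{U}$ is residual. Applied to $WM$, this is exactly the desired conclusion.

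I do not anticipate any serious technical obstacle here: the two non-trivial inputs (the $G_\delta$-property of $WM$ and the Polish-space structure of $\mathcal{U}$) have already been proved, and the hypothesis takes care of density. The only point worth flagging is that one must work inside $\mathcal{U}$ rather than inside the smaller space $\{b \in L^\infty_t\BV_x : D\cdot b_t = 0\}$, where the perturbations produced by Theorem \ref{riassuntivo} need not remain by the total variation estimate \eqref{finali}; it is crucial that $\mathcal{U}$ was constructed precisely as a Polish ambient space carrying the continuous extension of the flow map $\Phi$, so that the Baire machinery can be applied without further care.
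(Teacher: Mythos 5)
Your argument is correct and is exactly the "elementary" argument the paper intends (the paper's proof is literally the single word \emph{Elementary}, and the introduction sketches the same chain: $SM\subseteq WM$, $WM$ is $G_\delta$ by Proposition \ref{Prop:weak_mix_G_delta}, and a dense $G_\delta$ in the Polish, hence Baire, space $\mathcal U$ is residual). Nothing further is needed.
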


\begin{proof}
Elementary.
\end{proof}

Our aim will be to prove the assumption of the above corollary, which together with Proposition \ref{Prop:weak_mix_G_delta} will conclude the proof of Theorem \ref{Theo:main_intro} once we show that the dense set of strongly mixing vector fields are actually exponentially mixing.

\begin{remark}
    The above situation, namely
    \begin{itemize}
        \item $b$ strongly mixing is dense in $\mathcal{U}$,
        \item $b$ weakly mixing is second category in $\mathcal{U}$,
    \end{itemize}
is in some sense the best situation we can hope in $\mathcal{U}$. Indeed, the strongly mixing vector fields are a set of first category and then it is not a "fat" set. On the other hand, the weakly mixing vector fields would be a "fat" set once we know their density, which one deduces from the density of the strongly mixing vector fields.
\end{remark}

\subsection{Markov Shifts}
\label{subS:markov}

When dealing with finite spaces $X=\lbrace 1,\dots,n\rbrace$ and processes whose outcome at time $k$ depends only on their outcome at time $k-1$ it is easier to determine some statistical properties of the dynamical system, as ergodicity and mixing (see for a reference \cite{Mane},\cite{Viana}). More precisely let $B(n)=\lbrace\theta:\Z\rightarrow X\rbrace$ the space of sequences and define a cylinder
$$
C(m,k_1,\dots,k_r)=\Big\lbrace \theta\in B(n): \theta(m+i)=k_{i+1},\ i=0,\dots,r-1\Big\rbrace
$$
where $m\in\Z$ and $k_i\in X$. Therefore, since the Borel $\sigma$-algebra on $B(n)$ is generated by disjoint union of cylinders, we can define a probability measure $\mu$ on $B(n)$ simply determining its value on cylinders. A \emph{Markov measure} $\mu$ is a probability measure on $B(n)$ for which there exist $p_i > 0, P_{ij}\geq 0$, $i,j=1,\dots n$, with
\begin{equation*}
\sum_i p_i = \sum_j P_{ij} = 1, \quad \sum_i p_i P_{ij} = p_j,
\end{equation*}
such that
  \begin{equation*}
      \mu(C(m,k_1,\dots,k_r))= p_{k_1} P_{k_1k_2} \dots P_{k_{r-1}k_r}
  \end{equation*}
  for every cylinder $C(m,k_1,\dots,k_r)$. The $P_{ij}$ are called \emph{transition probabilities} and $P= (P_{ij})$ is the \emph{transition matrix}. The transition matrix is a stochastic matrix, that is $\sum_{j}P_{ij}=1$ for every $i$. 
  Now define $P^{(m)}_{ij}$ the coefficients of the matrix $P^m$.
  \begin{definition}
  A matrix $P$ with positive coefficients is \emph{irreducible} if $\forall i,j$ there exists $m$ such that $P^{(m)}_{ij}>0$.  \end{definition}
  \begin{definition}
 \label{aperiodic}
  A matrix $P$ with positive coefficients is \emph{aperiodic} if  there exists $m$ such that $P^{(m)}_{ij}>0$ $\forall i,j$.  \end{definition}
  
  \noindent A \emph{Markov shift} is a map $\sigma:(B(n),\mu)\rightarrow (B(n),\mu)$ such that
  \begin{equation*}
      \sigma(\theta)(i)=\theta(i+1),\quad\forall\theta\in B(n). 
  \end{equation*}
  Then it can be proved that $\sigma_\sharp\mu=\mu$.  We conclude this subsection with the following results on ergodicity and mixing properties of Markov shifts.
  \begin{proposition}[Ergodicity]
      The following are equivalent:
      \begin{enumerate}
          \item $\sigma:(B(n),\mu)\rightarrow (B(n),\mu)$ is ergodic;
          \item $P$ is irreducible;
          \item $\displaystyle{\lim_{m\to\infty}\frac{1}{m}\sum_{k=0}^{m-1} P^{(k)}_{ij}=p_j}$.
      \end{enumerate}
  \end{proposition}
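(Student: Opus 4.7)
The plan is to prove the equivalences by establishing the cycle (1) $\Rightarrow$ (3) $\Rightarrow$ (2) $\Rightarrow$ (1). The intuition is that ergodicity is a measure-theoretic statement about the shift, irreducibility is a purely combinatorial statement about the matrix $P$, and the Cesàro limit (3) is an analytic bridge between the two.

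For (1) $\Rightarrow$ (3), I would apply the Birkhoff ergodic theorem to the indicator function $\chi_{C(0,j)}$ on $(B(n),\mu)$. Under ergodicity, the time averages $\frac{1}{m}\sum_{k=0}^{m-1}\chi_{C(0,j)}(\sigma^k\theta)$ converge $\mu$-a.e. to the constant $\mu(C(0,j))=p_j$. Restricting $\theta$ to the positive-measure set $C(0,i)$ and integrating against $\chi_{C(0,i)}/p_i$, the Markov property lets me rewrite $\mu(\sigma^{-k}C(0,j)\cap C(0,i)) = p_i P^{(k)}_{ij}$, and dividing out $p_i$ yields the desired Cesàro limit.

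For (3) $\Rightarrow$ (2), since $p_j>0$ for all $j$, the Cesàro mean $\frac{1}{m}\sum_k P^{(k)}_{ij}$ converges to a positive number, forcing at least one $P^{(k)}_{ij}$ to be positive for each pair $(i,j)$, which is exactly irreducibility.

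The main work lies in (2) $\Rightarrow$ (1). Here I would first use a Perron--Frobenius argument for stochastic matrices to show that irreducibility of $P$ implies $\frac{1}{m}\sum_{k=0}^{m-1}P^{(k)}_{ij}\to p_j$: indeed, the vector $p$ is the unique (up to scalar) left eigenvector of $P$ with eigenvalue $1$ whose support is all of $\{1,\dots,n\}$, and standard arguments on Cesàro averages of powers of a stochastic matrix yield the convergence. Given this, to prove ergodicity I would verify that for any two cylinders $A=C(0,i_0,\dots,i_r)$ and $B=C(0,j_0,\dots,j_s)$ one has
\begin{equation*}
\lim_{m\to\infty}\frac{1}{m}\sum_{k=0}^{m-1}\mu(\sigma^{-k}A\cap B)=\mu(A)\mu(B),
\end{equation*}
which is a direct computation using the Markov property together with the Cesàro convergence just established. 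The standard approximation of measurable sets by finite disjoint unions of cylinders extends this to arbitrary $A,B\in\mathcal{B}(B(n))$, and one concludes ergodicity via the characterization that $\frac{1}{m}\sum_{k<m}\mu(\sigma^{-k}A\cap B)\to\mu(A)\mu(B)$ for all measurable $A,B$ is equivalent to \eqref{ergodicity} for $\sigma$. The main obstacle, and the step requiring the most care, is the Perron--Frobenius-type convergence $\frac{1}{m}\sum_k P^{(k)}_{ij}\to p_j$ from irreducibility alone (without aperiodicity as in Definition \ref{aperiodic}), since periodicity of $P$ does not destroy Cesàro convergence but does destroy pointwise convergence, and this subtlety is precisely what distinguishes ergodicity from mixing in this discrete setting.
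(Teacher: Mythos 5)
Your argument is correct: the cycle (1)~$\Rightarrow$~(3)~$\Rightarrow$~(2)~$\Rightarrow$~(1) closes properly, the identity $\mu(\sigma^{-k}C(0,j)\cap C(0,i))=p_iP^{(k)}_{ij}$ and the cylinder computation are right, and you correctly isolate the one delicate point, namely that irreducibility alone gives Ces\`aro (not pointwise) convergence of $P^{(k)}_{ij}$ to $p_j$ because the peripheral spectrum of a periodic irreducible stochastic matrix averages out while the eigenvalue $1$ remains simple with left eigenvector $p$. The paper itself states this proposition as a classical fact without proof, citing the standard references, so there is no in-paper argument to compare against; your proof is the standard one found in those references.
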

  \begin{proposition}[Mixing]
      \label{markov:mix}
      The following are equivalent:
      \begin{enumerate}
          \item $\sigma:(B(n),\mu)\rightarrow (B(n),\mu)$ is strongly mixing;
          \item $P$ is aperiodic;
          \item $\displaystyle{\lim_{m\to\infty} P^{(m)}_{ij}=p_j}$.
      \end{enumerate}
  \end{proposition}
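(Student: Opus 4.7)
My plan is to prove the three-way equivalence by the cycle $(2) \Leftrightarrow (3) \Rightarrow (1) \Rightarrow (2)$, exploiting the fact that the Borel $\sigma$-algebra of $B(n)$ is generated by the algebra $\mathcal{C}$ of finite disjoint unions of cylinders, on which the Markov measure is given explicitly by products of $p_i$'s and $P_{ij}$'s.

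For the equivalence $(2) \Leftrightarrow (3)$, the plan is to invoke the Perron--Frobenius theorem for stochastic matrices. If $P$ is aperiodic in the sense of Definition \ref{aperiodic}, then there exists $m_0$ with $P^{(m_0)}_{ij} > 0$ for every $i,j$, so $P$ is primitive: its spectral radius $1$ is a simple eigenvalue and the only eigenvalue on the unit circle, the left eigenvector being $(p_j)_j$. This yields $P^{(m)}_{ij} \to p_j$ at a geometric rate. Conversely, if $P^{(m)}_{ij} \to p_j > 0$ for all $i,j$, then $P^{(m)}_{ij} > 0$ eventually, so $P$ is aperiodic.

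For $(3) \Rightarrow (1)$, I will first verify the mixing condition on cylinders. Given two cylinders $A = C(a, i_1, \dots, i_r)$ and $B = C(b, j_1, \dots, j_s)$, for $n$ larger than $r + |a - b|$ the set $\sigma^{-n}(A) \cap B$ becomes a cylinder whose measure can be written
\begin{equation*}
    \mu(\sigma^{-n}(A) \cap B) = p_{j_1} P_{j_1 j_2} \cdots P_{j_{s-1} j_s} \, P^{(N)}_{j_s i_1} \, P_{i_1 i_2} \cdots P_{i_{r-1} i_r},
\end{equation*}
for a suitable exponent $N = N(n) \to \infty$. Using $(3)$, $P^{(N)}_{j_s i_1} \to p_{i_1}$, and the product factorises into $\mu(A)\mu(B)$. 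To extend to arbitrary Borel $A,B$, I will use a standard $3\varepsilon$ approximation: given $\varepsilon > 0$, approximate $A$ and $B$ in the $\mu$-measure by finite disjoint unions of cylinders $\tilde A, \tilde B$ within $\varepsilon$, apply the cylinder case (using that $\sigma$ preserves $\mu$), and let $\varepsilon \to 0$. This is a routine monotone-class / Carathéodory argument.

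For $(1) \Rightarrow (2)$, assume $\sigma$ is strongly mixing. Fix any $i,j$ and apply the mixing definition to the one-symbol cylinders $A = C(0,j)$ and $B = C(0,i)$, which have measures $p_j, p_i > 0$. Then
\begin{equation*}
    \mu(\sigma^{-n}(A) \cap B) = p_i P^{(n)}_{ij} \xrightarrow[n\to\infty]{} p_i p_j,
\end{equation*}
so $P^{(n)}_{ij} \to p_j > 0$; in particular $P^{(n)}_{ij} > 0$ for all $n$ large enough. Since there are finitely many pairs $(i,j)$, one can choose a single $m$ working simultaneously for all of them, giving aperiodicity.

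The only mildly delicate point is the cylinder computation in $(3) \Rightarrow (1)$, where one has to be careful about bookkeeping of the indices once $n$ exceeds the supports of $A$ and $B$, and about what remains of the formula when there is a ``gap'' of length $N$ between the two blocks of prescribed symbols; everything else is either the cited Perron--Frobenius theorem or standard measure-theoretic approximation.
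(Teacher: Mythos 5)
Your argument is correct. Note, however, that the paper does not prove Proposition \ref{markov:mix} at all: it is stated as a classical fact and delegated to the cited references, so there is no in-paper proof to compare against. What you give is precisely the standard textbook argument found there: the equivalence of aperiodicity with $P^{(m)}_{ij}\to p_j$ via Perron--Frobenius for primitive stochastic matrices (using that $(p_j)_j$ is the unique stationary left eigenvector and $p_j>0$), the verification of mixing on cylinders through the identity $\mu(\sigma^{-n}A\cap B)=p_{j_1}P_{j_1j_2}\cdots P_{j_{s-1}j_s}P^{(N)}_{j_s i_1}P_{i_1i_2}\cdots P_{i_{r-1}i_r}$ with $N\to\infty$, the extension to Borel sets by approximation in the generating algebra of finite unions of cylinders, and the converse by testing mixing on one-symbol cylinders. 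All the steps close correctly (in particular your $(1)\Rightarrow(2)$ in fact yields $(1)\Rightarrow(3)$ directly, which is harmless), so the proposal is a complete and correct proof of the statement the paper leaves to the literature.
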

\section{Density of Strongly Mixing vector fields}\label{S:density}

In Section \ref{S:permutation} we prove that \emph{permutation vector fields} (i.e. vector fields whose RLF $X_t$, when evaluated at $t=1$, is a permutation of subsquares, Point (3) of Theorem \ref{Prop:piececlo}) are dense in $\mathcal{U}$. In this section, we show
that each permutation vector field can be approximated by a vector field whose RLF evaluated at time $t=1$ is a unique cycle. This approximation result will be used to get first the density of ergodic vector fields, then the density of strongly mixing vector fields.

\subsection{Cyclic permutations of squares}

\noindent
We start by recalling some basic facts about permutations. Denote by $S_n$ the set of permutations of the elements $\lbrace 1,\dots,n\rbrace$.

\begin{definition}
	Let $\sigma \in S_{n}$ be a permutation and $k\leq n\in\N$. We say that $\sigma$ is a \emph{$k$-cycle} $c$ (or simply a \emph{cycle}) if there exist $k$ distinct elements $a_1,\dots,a_k\in\lbrace 1,\dots,n\rbrace$ such that
	\begin{equation*}
		\sigma(a_i)=a_{i+1}, \quad \sigma(a_k)=a_1, \quad \sigma(x)=x \quad\forall x\not= a_1,\dots,a_k.
	\end{equation*}
	We identify the permutation with the ordered set $c=(a_1a_2\dots a_k)$. The number $k$ is the \emph{length} of the cycle. We say that $c$ is \emph{cyclic} if $k=n$.  We call \emph{transpositions} the $2$-cycles.
\end{definition}

\begin{definition}
	Let $c_1,c_2$ be the cycles $c_1=(a_1\dots a_t)$ and $c_2=(b_1\dots b_s)$. We say that $c_1,c_2$ are \emph{disjoint} cycles if $a_i\not=b_j$ for every $i=1,\dots, t$, $j=1,\dots, s$.
\end{definition}

\noindent Recall the following result.

\begin{theorem}
	\label{prod:cycles}
	Every permutation $\sigma \in S_n$ is the product of disjoint cycles.
\end{theorem}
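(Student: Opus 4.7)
The plan is to use the orbit decomposition of $\sigma$ acting on the set $\{1,\dots,n\}$. First I would introduce the equivalence relation $a \sim b$ iff $b = \sigma^k(a)$ for some $k \in \Z$; reflexivity, symmetry and transitivity are immediate from $k=0$, from the existence of $\sigma^{-1}$ (since $\sigma \in S_n$ is a bijection), and from additivity of exponents. Hence the corresponding equivalence classes, call them $O_1,\dots,O_r$, partition $\{1,\dots,n\}$.

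Next, for each class $O$ and any representative $a\in O$ I would look at the sequence $a,\sigma(a),\sigma^2(a),\dots$. Since $\{1,\dots,n\}$ is finite, by pigeonhole there is a smallest positive integer $k=k(O)$ with $\sigma^k(a)=a$, and the elements $\sigma^0(a),\dots,\sigma^{k-1}(a)$ are pairwise distinct (otherwise $k$ would not be minimal). This shows that $O=\{a,\sigma(a),\dots,\sigma^{k-1}(a)\}$ consists of exactly $k$ elements and defines the $k$-cycle
\begin{equation*}
c_O=(a\ \sigma(a)\ \sigma^2(a)\ \dots\ \sigma^{k-1}(a))
\end{equation*}
which agrees with $\sigma$ on $O$ and fixes every point outside $O$. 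Different orbits are disjoint by construction, so the cycles $c_{O_1},\dots,c_{O_r}$ are pairwise disjoint in the sense of the given definition.

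Finally I would check that $\sigma = c_{O_1}\cdots c_{O_r}$. Disjoint cycles act on mutually disjoint subsets of $\{1,\dots,n\}$, so they commute and the product is unambiguous. For any $x\in\{1,\dots,n\}$ there is a unique orbit $O_i$ containing $x$: only the factor $c_{O_i}$ is nontrivial at $x$, and $c_{O_i}(x)=\sigma(x)$ by construction, giving $(c_{O_1}\cdots c_{O_r})(x)=\sigma(x)$.

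Since the statement is genuinely classical, I do not anticipate a serious obstacle. The only minor care is notational: the statement of the theorem allows singletons (fixed points of $\sigma$) to appear as $1$-cycles, which under the convention of the paper act as the identity, so one may either include or omit them from the product without affecting the conclusion.
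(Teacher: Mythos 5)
Your proof is correct: the orbit-decomposition argument (partition $\{1,\dots,n\}$ into $\sigma$-orbits, observe each orbit yields a cycle by minimality of the return time, and check the product of these disjoint cycles agrees with $\sigma$ pointwise) is the standard and complete proof of this classical fact. The paper itself merely recalls the theorem without proof, so there is nothing to compare against; your argument fills that in correctly, including the harmless convention about fixed points as $1$-cycles.
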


\noindent From now on we will address flows $X_t$ of divergence-free vector fields such that $X_{t=1}$ is a permutation of squares of size $\frac{1}{D}$. 
\\

\noindent Let us fix the size $D\in \N$ of the grid in the unit square $K$.  We enumerate the $D^2$ subsquares of the grid and we consider $S_{D^2}$ the set of the permutations of  $\lbrace \kappa_1,\dots, \kappa_{D^2}\rbrace$. We say that two squares (ore more in general two rectangles) are \emph{adjacent} if they have a common side. We will use also the word \emph{adjacent} for cycles: two disjoint cycles  of squares $c_1,c_2$ are adjacent 
if there exist $\kappa_1\in c_1$, $\kappa_2\in c_2$ adjacent subsquares. Two \emph{adjacent} squares can be \emph{connected} by a transposition, which can be defined simply as an exchange between the two squares: let $\kappa_i,\kappa_j$ two adjacent squares of size $\frac{1}{D}$ and let $R=\kappa_i\cup \kappa_j$, then the \emph{transposition flow} between $\kappa_i,\kappa_j$ is $T_t(\kappa_i,\kappa_j):[0,1]\times K\rightarrow K$ defined as
\begin{equation}
	\label{transposition flow}
		T_t(\kappa_i,\kappa_j)=
	\begin{cases}
	\chi^{-1}\circ r_{4t}\circ \chi & x\in \mathring{R}, \ t\in\left[0,\frac{1}{2}\right], \\
		\chi_i^{-1}\circ r_{4t}\circ \chi_i & x\in\mathring{\kappa}_i, \ t\in\left[\frac{1}{2},1\right], \\
		\chi_j^{-1}\circ r_{4t}\circ \chi_j & x\in\mathring{\kappa}_j, \ t\in\left[\frac{1}{2},1\right], \\
		x & \text{otherwise},
	\end{cases}
\end{equation}
where the map $\chi:R\rightarrow K$ is the affine map sending the rectangle $R$ into the unit square $K$, $\chi_{i},\chi_{j}$ are the affine maps sending $\kappa_i,\kappa_j$ into the unit square $K$ and $r$ is the rotation flow (\ref{rot:flow:square}). This invertible measure-preserving flow has the property to exchange the two subsquares in the unit time interval (Figure \ref{fig:transp}).
\begin{figure}
    \centering
    \includegraphics[scale=0.5]{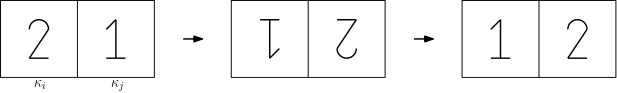}
    \caption{The action of the \emph{transposition flow} $T_t$.}
    \label{fig:transp}
\end{figure}
Moreover, by the computations done in  Lemma \ref{TV:rotations}, we can say that
\begin{equation}
\label{TV:trasp}
	\TV(\dot{T}_t(\kappa_i,\kappa_j))(\bar{R})\leq \frac{20}{D^2}.
\end{equation}

\begin{lemma}
\label{lem:cycl:1}
	Let $b\in L^\infty_t(\BV_x)$ be a divergence-free vector field and assume that its flow at time $t=1$, namely $X\llcorner_{t=1}$, is a $k$-cycle of squares of the grid $\N\times \N\frac{1}{D}$ where $k,D\in\N$. Then for every $M=2^p$, there exists $b^c\in L^\infty_t(\BV_x)$ divergence-free vector field  such that
	\begin{equation*}
		||b-b^c||_{L^\infty(L^1)}\leq \mathcal{O}\left(\frac{1}{D^3M}\right),
\end{equation*}
\begin{equation*}
||\TV(b^c - b)(K)||_{\infty} \leq \mathcal{O}\left(\frac{1}{D^2}\right),
	\end{equation*}
	and the map $X^c_{t=1}:K\rightarrow K$ is a $kM^2$-cycle of squares of size $\frac{1}{DM}$, where $X^c_t:[0,1]\times K\rightarrow K$ is the flow associated with $b^c$.
\end{lemma}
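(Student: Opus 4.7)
The plan is to subdivide each big square of the original $k$-cycle into $M^2$ subsquares of side $1/(DM)$ and to add a divergence-free perturbation $c$ to $b$ that implements a cyclic permutation $\pi$ of the $M^2$ subsquares inside one fixed big square $\kappa_1$. Since $T:=X_{t=1}$ is a rigid translation on each of the big squares of the original cycle, it acts on the refined grid $\N\times\N\frac{1}{DM}$ as $M^2$ disjoint $k$-cycles, one for each relative position $(i,j)\in\{1,\dots,M\}^2$ inside a big square. A direct verification shows that composing with a cyclic permutation $\pi$ of the $M^2$ subsquares of $\kappa_1$ (extended by the identity elsewhere) merges all $M^2$ cycles into one cycle of length $kM^2$, which is precisely the conclusion required for $X^c_{t=1}$.

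I would construct $\pi$ as a product of $M^2-1$ elementary transpositions of adjacent subsquares, grouped into $2p=\log_2 M^2$ layers (exploiting $M=2^p$): at layer $\ell$ the $M^2/2^{\ell-1}$ currently-merged sub-cycles are paired off and merged with one transposition per pair, giving $M^2/2^\ell$ disjoint transpositions acting in parallel on pairs of adjacent subsquares of side $1/(DM)$. Each elementary transposition is realized by the transposition flow \eqref{transposition flow} rescaled to the pair of subsquares, whose generating vector field has total variation of order $(DM)^{-2}$ on a unit time interval by Lemma \ref{TV:rotations} (and estimate \eqref{TV:trasp}).

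To obtain the quantitative estimates, I would schedule the $M^2-1$ transpositions sequentially in time within an interval of length $\delta=O(1)$ (say $\delta=1/2$), each occupying a subinterval of length $\delta/M^2$. At every instant only one transposition is active: its support has area of order $(DM)^{-2}$, its speed is of order $M/(D\delta)$, and (by time rescaling of Lemma \ref{TV:rotations}) its spatial total variation is of order $(DM)^{-2}\cdot M^2/\delta = O(D^{-2})$. Since at each time only one transposition contributes to both $\|c_t\|_{L^1_x}$ and $\TV(c_t)(K)$, I obtain
\begin{equation*}
\|c\|_{L^\infty_t(L^1_x)} \lesssim \frac{M}{D\delta}\cdot\frac{1}{(DM)^2} = \frac{1}{D^3 M},\qquad \|\TV(c)(K)\|_{L^\infty_t}\lesssim \frac{1}{D^2},
\end{equation*}
which translate, with $b^c=b+c$, into the two required bounds on $b^c-b$.

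The main obstacle is to ensure that the flow of the additive perturbation $b^c=b+c$ at $t=1$ is exactly $T\circ \pi$ rather than some non-commutative mixture of the two actions. I would handle this by concentrating $c$ in a short time interval near $t=1$ and working in the moving frame $Y_t=X_t\circ Z_t$: a direct computation using \eqref{Equa:comp_ff} gives $\dot Z_t = DX_t(Z_t)^{-1}\, c(t,X_t(Z_t))$, so by choosing $c$ to be the $X_t$-pushforward of the explicitly constructed transposition vector field (living in the reference frame of $X_t(\kappa_1)$), we force $Z_1=\pi$ and hence $Y_1=T\circ\pi$. Since $X_{t=1}=T$ is a translation and the perturbation is localized near $t=1$, the distortion $DX_t$ is close to the identity on the support of the pushforward, so the estimates above are preserved up to an $O(1)$ multiplicative constant; the fact that $X_t$ is measure-preserving further ensures that supports transform without inflating the $L^1$ and $\BV$ norms.
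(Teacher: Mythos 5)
Your proposal is correct and follows essentially the same route as the paper: refine the grid so that $T=X_{t=1}$ splits into $M^2$ disjoint $k$-cycles, merge them through a binary tree of $M^2-1$ transpositions of adjacent $\frac{1}{DM}$-subsquares inside $\kappa_1$ realized by the rescaled transposition flows \eqref{transposition flow}, and encode the perturbation as a composition $X_t\circ Z_t$ whose Eulerian field is the pushforward \eqref{Equa:comp_ff}; your sequential scheduling (one transposition at a time, sped up by $M^2/\delta$) yields exactly the same $L^1$ and $\TV$ arithmetic as the paper's parallel dyadic layers, via \eqref{TV:trasp}. The one flawed step is your justification of the $\mathcal O(1)$ constant in the pushforward: that $X_{t=1}$ is a translation and that $c$ is supported near $t=1$ does not make $DX_t$ close to $\Id$ for a general $L^\infty_t\BV_x$ field — but you do not need this, since (as in the paper, via \eqref{Equa:formula_comp}) the factors $\|\nabla X_t\|_\infty$ and $\TV(\nabla X_t)$ are simply absorbed into the $\mathcal O(1)$ constants of the statement.
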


\noindent Here and in the following we will write $T(\kappa_i) = \kappa_j$ meaning that $T$ is a rigid translation of $\kappa_i$ to $\kappa_j$. This to avoid cumbersome notation.

\begin{proof}
	Let us call $T\doteq X_{t=1}$: being a cycle, there exist $\lbrace \kappa_1,\dots,\kappa_k\rbrace\subset\lbrace 1,\dots,D^2\rbrace$  such that
	\begin{equation*}
		T(\kappa_{i})=\kappa_{i+1},\quad T(\kappa_{n})=\kappa_1, \quad T(x)=x \quad \text{otherwise}.
	\end{equation*}
	Now fix some $M=2^p$ and divide each subsquare $\kappa_i$ into $M^2$ subsquares $\kappa^j_i$ with $j=1,\dots, M^2$. Since $T$ is a translation of subsquares and choosing cleverly the labelling $j \to \kappa^j_i$, then we have also  $T(\kappa^j_i)=\kappa^{j}_{i+1}$ so that $T$ is a permutation of subsquares $\kappa^j_i$. 
	More precisely, it is the product of $M^2$ disjoint cycles of length $k$. The idea is to connect these cycles with transpositions in order to have a unique cycle of length $kM^2$: we will need a parturbation inside $\kappa_1$. \\
	
	\noindent
	 Divide the $M^2$ subsquares of $\kappa_1$ into $\frac{M^2}{2}$ couples $R^2_h=\kappa^j_1\cup\kappa^{j'}_1$ with $h=1,\dots,\frac{M^2}{2}$ and $\kappa^j_1,\kappa^{j'}_1$ are adjacent squares.  In the time interval $\left[0,\frac{1}{2}\right]$ perform $\frac{M^2}{2}$ transpositions, one in each $R^2_{h}$, that is
	\begin{equation*}
		X^c_t(x)= X_t\circ T^2_t (x), \quad t\in\left[0,\frac{1}{2}\right],
	\end{equation*}
     where the flow $T^2:\left[0,\frac{1}{2}\right]\times K\rightarrow K$ 
$$
T^2_{t}\llcorner_ {R^2_h}\doteq T_{2t}(\kappa^j_1,\kappa^{j'}_1) \quad \text{and} \quad T^2_t(x)=x \ \text{otherwise},
$$
is the transposition flow (\ref{transposition flow}) between $\kappa_1^j$ and $\kappa_1^{j'}$ as defined in \eqref{transposition flow} above.
	Then for $t\in\left[0,\frac{1}{2}\right]$ fixed,
	\begin{align*}
		\TV(b^{c}_t-b_t)({\kappa}_1) \leq \mathcal O(1) 2 \frac{M^2}{2}\frac{20}{M^2D^2},
	\end{align*}
	where we have used \eqref{TV:trasp}. We observe that at this time step we have obtained $\frac{M^2}{2}$ disjoint $2k$-cycles. \\
	
	\noindent In the time interval  $\left[\frac{1}{2},\frac{3}{4}\right]$ we divide the unit square into squares $R^4_h=R^2_{j}\cup R^2_{j'}$ with $h=1,\dots, \frac{M^2}{4}$ where $R^2_j, R^2_{j'}$ are adjacent (in particular there exist $\kappa_1^{j}\subset R^2_j,\kappa_1^{j'}\subset R^2_{j'} $ adjacent squares). Now we perform  $\frac{M^2}{4}$ transpositions of squares connecting the two rectangles $R^2_j, R^2_{j'}$ as in Figure \ref{fig:cyclic:lem:1}. More precisely we define  for $t\in \left[\frac{1}{2},\frac{3}{4}\right] $
	\begin{equation*}
		X^c_t(x)=X_t\circ T^4_t (x), \quad t\in\left[\frac{1}{2},\frac{3}{4}\right],
	\end{equation*}
	where the flow $T^4:\left[\frac{1}{2},\frac{3}{4}\right]\times K\rightarrow K$ 
$$
T^4_{t}\llcorner_ {R^4_h}\doteq T_{4t-2}(\kappa^j_1,\kappa^{j'}_1) \quad \text{and} \quad T^4_t(x)=x \ \text{otherwise},
$$
is the transposition flow (\ref{transposition flow}) between $\kappa_1^j$ and $\kappa_1^{j'}$. Again,
	\begin{align*}
		\TV(b^c_t-b_t)({\kappa}_1) \leq \mathcal O(1) 4 \frac{M^2}{4}\frac{20}{M^2D^2}.
	\end{align*}
	\begin{figure}
	    \centering
	    \includegraphics[scale=0.55]{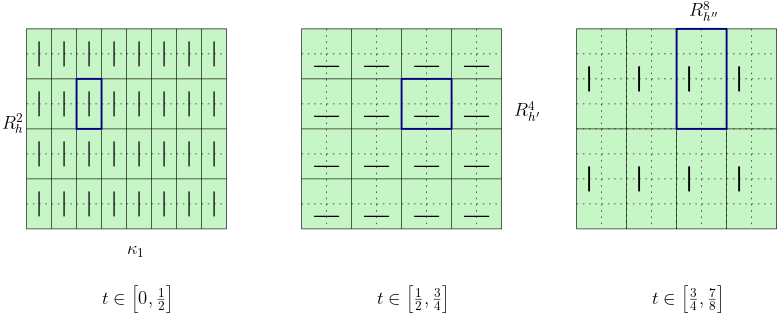}
	    \caption{Subdivision of the initial square $\kappa_1$ into subrectangles/subsquares where transpositions (the bars) occurr between subsquares $\kappa^j_i\subset R^{2i}_j,\kappa^{j'}_i\subset R^{2i}_{j'}$ of side $\frac{1}{DM}$ (dotted lines). Notice that at the first/third step the initial square $\kappa_1$ is divided into rectangles (see Point \eqref{proc:1} of the procedure), while at the second step it is divided into squares (see Point \eqref{proc:2}).}
	    \label{fig:cyclic:lem:1}
	\end{figure}
	
	\noindent Repeating the procedure (see Figure \ref{fig:cyclic:lem:1}),
	\begin{enumerate}
	    \item \label{proc:1} at the $2i-1$-th step we divide our initial square $\kappa_1$ into $\frac{M^2}{2^{2i-1}}$ rectangles (made of two squares of obtained at the step $2(i-1)$) so that we perform ${2^{2p-i}}$ transpositions of subsquares $\kappa_1^j$ in the time interval $\left[\sum_{j=1}^{2i-2} \frac{1}{2^j},\sum_{j=1}^{2i-1}\frac{1}{2^j}\right]$;
	    \item \label{proc:2} at the $2i$-th step, we divide our initial square $\kappa_1$ into $\frac{M^2}{2^{2i}}$ squares (made of 2 rectangles of the previous step) so that we perform ${2^{2p-i}}$ transpositions of subsquares $\kappa_1^j$ in the time interval $\left[\sum_{j=1}^{2i-1} \frac{1}{2^j},\sum_{j=1}^{2i}\frac{1}{2^j}\right]$.
	\end{enumerate}
	In both cases we find in the interval $\left[\sum_{j=1}^{i-1} \frac{1}{2^j},\sum_{j=1}^{i}\frac{1}{2^j}\right]$ that
	\begin{equation*}
		\TV(b^{c}_t-b_t)({\kappa}_1) \leq \mathcal O(1) 2^i\frac{M^2}{2^i} \frac{20}{M^2D^2}.
	\end{equation*}
	Call  
    $t_i=\sum_{j=1}^i 2^{-j}$. We will prove that the map $X(1,t_i) \circ X^c\llcorner_{t=t_i}$ is a permutation given by the product of $\frac{M^2}{2^i}$ disjoint $2^ik$-cycles simply by induction on $i$.  \\
    
    \noindent The case $i=1$ is immediate from the definition. So let us assume that the property is valid for $i$ and call $c_1,c_2,\dots,c_{\frac{M^2}{2^i}}$ the disjoint $2^ik$-cycles made of rectangles of subsquares as in Figure \ref{fig:cyclic:lem:1}, where we have ordered them in such a way that $c_{2h-1},c_{2h}$ with $h=1,\dots,\frac{M^2}{2^{i+1}}$ are adjacent along the long side.
    Then fix a couple of adjacent cycles, for simplicity $c_1,c_2$. Then
    \begin{equation*}
    \begin{split}
        & c_1=(\kappa^1_1\quad\dots\quad\kappa^1_{2^ik}),\\
        & c_2=( \kappa^2_1\quad\dots\quad\kappa^2_{2^ik}),
    \end{split}
     \end{equation*}
     and assume that there exist $j,j'$ such that $\kappa^1_j,\kappa^2_{j'}$ are the adjacent subsquares in which we perform the transposition. By simply observing that
     \begin{equation*}
         X^c\llcorner_{t=t_{i+1}}(x)=X^c\llcorner_{t=t_i}(x)+\int_{t_i}^{t_{i+1}} b^c(s,X^c_s(x))ds,
     \end{equation*}
     we deduce that, when restricted to $c_1\cup c_2$, the map $X(1,t_{i+1}) \circ X^c\llcorner_{t_{i+1}}$ is the following permutation
     \begin{equation*}
         \left(\begin{array}{lcllcllcllcl}
                \kappa^1_1 & \dots & \kappa^1_{j-1} & \kappa^1_j & \dots & \kappa^1_{2^ik} & \kappa^2_1 & \dots & \kappa^2_{j'-1} & \kappa^2_{j'} & \dots & \kappa^2_{2^ik} \\ &&&&&&&&&&&\\
                \kappa^1_2 & \dots & \kappa^2_{j'} & \kappa^1_{j+1} & \dots & \kappa^1_1 & \kappa^2_2 & \dots & \kappa^1_j& \kappa^2_{j'+1} & \dots & \kappa^2_1
         \end{array}\right).
     \end{equation*}
Clearly this is a single cycle of length $2^{(i+1)k}$, and it is supported on a rectangle.

    The procedure stops at $t=\sum_{j=1}^{2p}\frac{1}{2^j}$ when we have obtained a unique $M^2k$-cycle. Summing up, for $t$ fixed
	\begin{equation*}
			\TV(b^{c}_t - b)(K) \leq \mathcal O(1) \frac{20}{D^2},
	\end{equation*}
    that is
	\begin{equation*}
		|| \TV(b^c_t - b_t)(K)||_\infty \leq \mathcal{O}\left(\frac{1}{D^2}\right).
	\end{equation*}
	We conclude with the $L^\infty_t L^1_x$ estimate of the vector field: to do this computation it is necessary to observe that $b_t$ and $b^{c}_t$ differ only in the couples of adjacent squares in which we perform the transpositions. Using \eqref{Equa:comp_ff} and simple estimates on the rotation \eqref{rotation field} we obtain, for $t\in[t_{i-1},t_i]$ fixed,
	\begin{equation*}
	    \| b^c_t-b_t \|_1\leq \mathcal O(1) \frac{M^2}{2^i}2^i\frac{2}{DM}\frac{2}{D^2M^2}\leq\mathcal{O}\left(\frac{1}{D^3M}\right),
	\end{equation*}
	which concludes the proof. 
\end{proof}

\noindent We state now the approximation result by vector fields whose flow at time $t=1$ is a unique cycle.
\begin{proposition}
\label{unique:cycle}
	Let $b\in L^\infty_t(\BV_x)$ be a divergence-free vector field and assume that $b_t = 0$ for $t \in [0,\delta]$, $\delta > 0$, and its flow at time $t=1$, namely $X\llcorner_{t=1}$, is a permutation of squares of the grid $\N\times \N\frac{1}{D}$ where $D\in\N$. Then for every $M=2^p \gg 1$ there exists a divergence-free vector field $b^c\in L^\infty_t(\BV_x)$ such that
	\begin{equation*}
		||b-b^c||_{L^1(L^1)}\leq \mathcal{O}\left(\frac{1}{DM^3}\right), \quad ||\TV(b^c_t - b_t)(K)||_{\infty}\leq \mathcal{O}\left(\frac{1}{\delta M^2}\right) 
	\end{equation*}
	and the map $X^c_{1}:K\rightarrow K$, being $X^c_t:[0,1]\times K\rightarrow K$ is the flow associated with $b^c$, is a $M^2D^2$-cycle of subsquares of size $\frac{1}{DM}$.
\end{proposition}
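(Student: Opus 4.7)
The plan is to combine two operations that exploit the two time regions of $b$ differently. Writing $T = X_{t=1}$ as a product of disjoint cycles $c_1, \dots, c_\ell$ (Theorem \ref{prod:cycles}), with $\ell \leq D^2$, I would apply Lemma \ref{lem:cycl:1} in parallel to each cycle on the active interval $[\delta, 1]$. Since the perturbations required by that lemma are confined to one distinguished initial subsquare of each $c_i$, they are supported on disjoint regions and do not interact. The outcome is an intermediate divergence-free field $\tilde b$ whose RLF at $t = 1$ is a product of $\ell$ disjoint small cycles, the $i$-th being a $k_i M^2$-cycle of subsquares of side $\frac{1}{DM}$, where $k_i$ is the length of $c_i$ and $\sum k_i = D^2$.

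In the frozen interval $[0, \delta]$ I would then merge these $\ell$ refined cycles into a single $D^2 M^2$-cycle via $\ell - 1$ transpositions of adjacent small subsquares. The cycle-adjacency graph (vertices the refined cycles, edges joining pairs of cycles that contain two adjacent small subsquares) is connected because the small-subsquare grid is; picking a spanning tree, for each of its $\ell - 1$ edges $\{c_i, c_j\}$ I choose one pair of adjacent small subsquares straddling the common boundary of adjacent big subsquares $\kappa \subset c_i$ and $\kappa' \subset c_j$. For $M$ large each such common boundary offers $\sim M$ available pairs, while a cycle of degree $d \leq \ell - 1 \leq D^2 - 1$ in the spanning tree needs only $d$ of them, so all $\ell - 1$ pairs can be chosen pairwise disjoint. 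The corresponding transposition flows of \eqref{transposition flow}, time-rescaled to duration $\delta$, are then supported on disjoint regions and commute, and their simultaneous product $P$ satisfies that $X^{\tilde b}_{t=1} \circ P$ is a single cycle, by the elementary identity that a transposition between two distinct cycles of a permutation merges them.

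Setting $b^c$ to equal the parallel transposition flows on $[0,\delta]$ and $\tilde b$ on $[\delta,1]$, the norm estimates split over the two time regions. On $[0,\delta]$, one small-subsquare transposition has instantaneous total variation $\mathcal{O}(1/(\delta(DM)^2))$ by Lemma \ref{TV:rotations} and $L^1_{t,x}$-mass $\mathcal{O}(1/(DM)^3)$, independent of $\delta$; summing over the $\ell - 1 \leq D^2$ disjoint parallel transpositions yields the stated bounds $\mathcal{O}(1/(\delta M^2))$ and $\mathcal{O}(1/(DM^3))$ respectively. On $[\delta,1]$ the Lemma \ref{lem:cycl:1} contributions, applied in parallel, amount to $\mathcal{O}(\ell/D^2)$ in instantaneous TV and $\mathcal{O}(\ell/(D^3M))$ in $L^1_{t,x}$, which are absorbed in the stated bounds in the regime $M \gg D$ and $\delta M^2 \lesssim 1$ relevant for the applications, matching the remark that $M$ controls $\delta^{-1}$. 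The main obstacle is the combinatorial step: selecting $\ell - 1$ transpositions that simultaneously span a tree in the cycle-adjacency graph, consist of pairwise-disjoint small-subsquare pairs so they act in parallel, and involve only adjacent small subsquares on shared big-subsquare boundaries; all three constraints are met once $M$ is large compared to $D$ so that each such boundary provides many independent adjacent small-subsquare pairs.
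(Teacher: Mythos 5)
Your proposal follows essentially the same route as the paper: decompose $X_{t=1}$ into disjoint cycles, refine each into a cycle of $\frac{1}{DM}$-subsquares via Lemma \ref{lem:cycl:1}, and then merge them in the frozen interval $[0,\delta]$ by pairwise-disjoint transposition flows chosen along a spanning tree of the cycle-adjacency graph, with identical per-transposition estimates from Lemma \ref{TV:rotations}. The only caveat is your claim that the refinement costs $\mathcal O(\ell/(D^3M))$ in $L^1_{t,x}$ and $\mathcal O(\ell/D^2)$ in TV are ``absorbed'' into $\mathcal O(1/(DM^3))$ and $\mathcal O(1/(\delta M^2))$ — they are not, since $\ell$ may be of order $D^2$; the paper avoids this by treating the refinement as a preprocessing step (the stated bounds measure only the merging perturbation relative to the already-refined field, and the downstream application correctly uses the combined bound $\mathcal O(1/(DM))$).
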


\begin{proof}[Proof of Proposition \ref{unique:cycle}.]
	Let us fix $\epsilon>0$ and consider $M=2^p$ to be chosen later. Let $C\doteq X\llcorner_{t=1}$ be a permutation, which we write by Theorem (\ref{prod:cycles})
	\begin{equation*}
		C=(\kappa^1_1\dots \kappa^1_{k_1})(\kappa^2_1\dots \kappa^2_{k_2})\dots (\kappa^n_1\dots \kappa^n_{k_n})=c_1\dots c_n,
	\end{equation*}
	where $\sum_{i=1}^n k_i\leq D^2$. Define $c_{n+1},\dots c_{N}$, $N=D^2-\sum_i k_i +n$, the $1$-cycles representing the subsquares that are sent into themselves. By the previous lemma we can also assume that $C\llcorner_{c_i}$ $i=1,\dots,N$ is a cyclic permutation of subsquares $a^i_{jk}$, $j = 1,\dots,M^2$, of the grid $\N\times\N\frac{1}{MD}$. To find a $D^2M^2$ cycle we should consider all the couples of adjacent subsquares (of size $\frac{1}{MD}$), and then we should connect them by transpositions in a precise way. \\
	
	\noindent
	Fix $c_1$ and consider 
	\begin{equation*}
	    C^1=\lbrace c_h\not= c_1 \ \text{ s.t. } c_h \text{ adjacent to } c_1\rbrace=\lbrace c^1_1,\dots,c^1_{|C^1|}\rbrace.
	\end{equation*}
	Now for every $c^1_j\in C^1$ define by induction the disjoint families of cycles
	\begin{equation*}
	    C^2_j=\lbrace c_h\notin \lbrace c_1\rbrace \cup C^1 \cup C^2_1 \cup \dots\cup C^2_{j-1} \text{ s.t. }  c_h  \text{ is adjacent to } c^1_j\rbrace,
	\end{equation*}
	and call
	\begin{equation*}
	    C^2= C^2_1\cup\dots\cup C^2_{|C^1|}=\lbrace c^2_1,\dots,c^2_{|C^2|}\rbrace.
	\end{equation*}
	At the $i-1$-th step we have 
	\begin{equation*}
	    C^{i-1}= \lbrace c^{i-1}_1,\dots,c^{i-1}_{|C^i|}\rbrace.
	\end{equation*}
	and, for every $c^{i-1}_j\in C^{i-1}$,
	\begin{equation*}
	    C^{i}_j=\lbrace c_h\notin \lbrace c_1\rbrace \cup C^1 \cup  C^2\cup \dots\cup C^{i-1}\cup C^i_1\cup\dots C^i_{j-1} \text{ s.t. }  c_h \text{ is adjacent to } c^{i-1}_j\rbrace.
	\end{equation*}
    The procedure ends when we have arranged all $c_i$ into sets $C^i$, and hence for some $K \in \N$ we obtain $C^{K+1}=\emptyset$ (see Figure \ref{Fig:cicli_concat}). Indeed, by contradiction assume that $|\lbrace c_1\rbrace \cup C^1\cup C^2\cup\dots \cup C^K|<N$. Then this set has a boundary, i.e. there exists a cycle $c\not\in \{c_1\rbrace \cup C^1\cup C^2\cup\dots \cup C^K$ adjacent to a cycle of $\{c_1\rbrace \cup C^1\cup C^2\cup\dots \cup C^K$, which is a contradiction by definition. 

    The partition
$$
C(c_1)=\lbrace c_1\rbrace \cup C^2\cup\dots C^K
$$
has the natural structure of a directed tree: indeed every two cycles $c_i \in C^i$, $c_j \in C^j$ are connected by a unique sequence of cycles: 
the direction of each edge is given by the construction $c^{i-1}_j \to c_h$ whenever $c_h \in C^i_j$. This tree-structure gives us a selection of the $N-1$ couples of subsquares $\kappa_i^j$ of disjoint cycles $c_i$ in which we can perform a transposition among the subsquares $a^i_{jk}$ to connect all of them in a unique $D^2M^2$-cycle. More precisely, for every connected couple $c^{i-1}_j,c_h$ such that $c_h \in C^i_j$, there exist cubes $\kappa \in c^{i-1}_j,\kappa' \in c_h$, and hence there are adjacent subsquares $a \subset \kappa,a' \subset \kappa'$ of size $1/(MD)$: assuming $M \geq 4$, we can take $a,a'$ not being on the corners of $\kappa,\kappa'$, respectively.

	Let $T_t:[0,\delta]\times K\rightarrow K$ be the transposition flow \eqref{transposition flow} acting in the selected $N-1$ couples of subsquares $a,a'$ reparametrized on the time interval $[0,\delta]$ and define (being $X_t = \Id$ for $t \in [0,\delta]$)
	\begin{equation}
	\label{cyclic:flow}
		X^c_t(x)=\begin{cases}
			T_t(x) & t\in [0,\delta], \\
			X_{t}\circ T_{\delta} (x) & t\in[\delta,1].
		\end{cases}
	\end{equation}
	The transposition is well defined: indeed it can happen that  $c_i,c_j,c_k$ are adjacent cycles and the couples of adjacent squares (of size $\frac{1}{D}$) are $\kappa_i,\kappa_j$ and $\kappa_i,\kappa_k $ (where $\kappa_i\in c_i,\kappa_j\in c_j$ and $\kappa_k\in c_k$), that is: $\kappa_i$ is in common.  But since the transposition occurs between subsquares of size $\frac{1}{DM}$ nor belonging to the corners, it is always guaranteed that the transpositions act on disjoint subsquares. 
	By using the explicit formula \eqref{rotation field} we get that for $t\in[0,\delta]$
	\begin{equation*}
	\begin{split}
	    ||b^c_t - b_t||_1 &\leq 
\frac{\mathcal{O}(1)}{\delta} \left( \frac{N-1}{D^3 M^3} \right) \leq 
\frac{\mathcal{O}(1)}{\delta} \left( \frac{1}{D M^3} \right),
	\end{split}
	\end{equation*}
	while for $t\in[\delta,1]$ it clearly holds $b^c_t = b_t$. 
	Coupling these last two estimates we get the $L^1_t L^1_x$ estimate:
	\begin{equation*}
	    ||b-b^c||_{L^1(L^1)} \leq 
	    \mathcal{O}\left(\frac{1}{DM^3}\right), 
	\end{equation*}
for $\delta<<1$ and $M$ sufficiently large.
 
Next we compute the total variation for $t\in[0,\delta]$: by using \eqref{TV:trasp}, we get
	\begin{align*}
		\TV(b^c_t)(K)&  \leq \frac{N-1}{\delta}  \frac{20}{M^2D^2}  \leq \frac{1}{\delta}\frac{20}{M^2},
	\end{align*}
	while for $t\in[\delta, 1-\delta]$ we find
	\begin{equation*}
	   \TV(b^c_t)(K)= \TV(b_t)(K), 
	\end{equation*}
	therefore
	\begin{equation*}
	    ||\TV(b^c_t - b_t)(K)||_{\infty}\leq \mathcal{O}\left(\frac{1}{\delta M^2}\right).
	\end{equation*}

	To conclude we have to prove that $X^c_1$ is a unique cycle, which follows by the tree-structure of the selection of adjacent cycles. The end points of the tree are clearly cycles. By recurrence, assume that $c^{i-1}_j$ is connected to cycles $\gamma_h$, each one made of all squares belonging to $c_h \in C^i_j$ and all subsequent cycles to $c_h$. It is fairly easy to see that the transposition merging $c^{i-1}_j$ to each $c_h \in C^i_j$ generates a unique cycles $\gamma^i_j$, made of the cubes of $c^{i-1}_j$ and all $\gamma_h$. We thus conclude that the map $X^c_1$ is a cycle of size $M^2 D^2$.
%
%
%
\end{proof}

\begin{figure}[t]
    \centering
    \def\svgwidth{.7\textwidth}
\input{./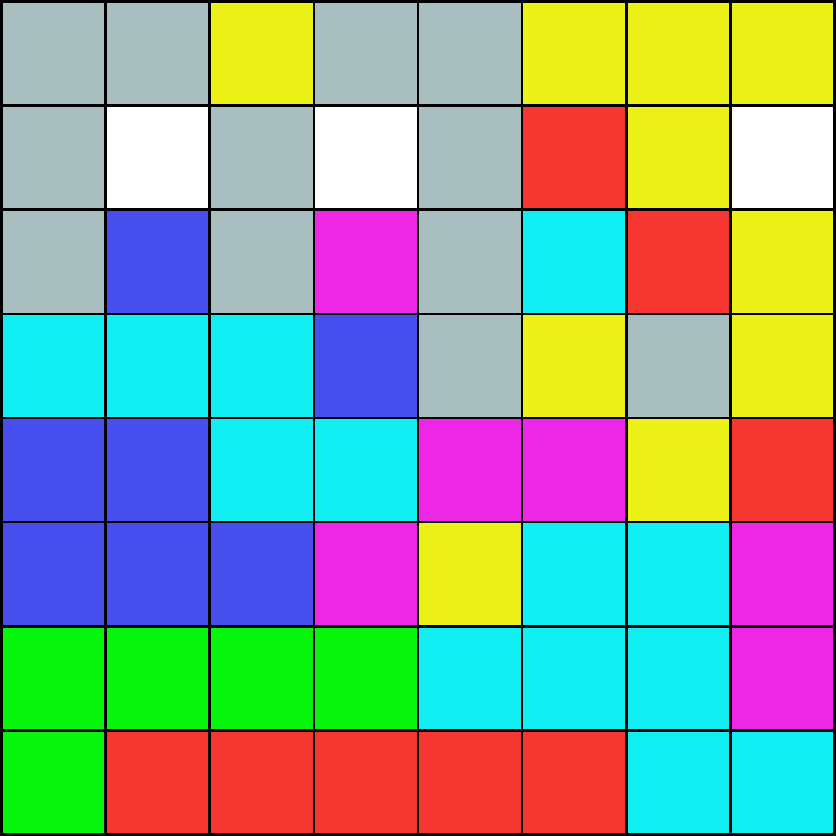_tex}
    \caption{Concatenation of cycles in a specific example, Remark \ref{Ex:cicliconc}. The orange subrectangles are the couples of $a^i_{jk}$ on which the transposition $T_t$ of \eqref{cyclic:flow} acts.}
\label{Fig:cicli_concat}
\end{figure}

\begin{remark}
\label{Ex:cicliconc}
An example of how the proof works is in Figure \ref{Fig:cicli_concat}: the decomposition in cycles is
    \begin{equation*}C = {\color{green}(\kappa_1^1\dots\kappa^1_5)}{\color{blue}(\kappa^2_1\dots\kappa^2_7)}{\color{red}(\kappa^3_1\dots\kappa^3_8)}{\color{magenta}(\kappa^4_1\dots\kappa^4_6)}{\color{cyan}(\kappa^5_1\dots\kappa^5_{13})}{\color{gray}(\kappa^6_1\dots\kappa^2_{12})}(\kappa^7_1){\color{yellow}(\kappa^8_1\dots\kappa^8_{10})}(\kappa^9_1)(\kappa^{10}_1).\end{equation*}
    The black arrow indicates the adjacent subsquares where the exchanges are performed: the tree of concatenation is then \\
\begin{center}
\begin{tikzpicture}
\node{${\color{green}(\kappa_1^1\dots\kappa^1_5)}$}
    child{node{${\color{blue}(\kappa^2_1\dots\kappa^2_7)}$}
        child{node{${\color{gray}(\kappa^6_1\dots\kappa^2_{12})}$}}
        child{node{$(\kappa^7_1)$}}
    }
    child{node[xshift= 1cm]{${\color{red}(\kappa^3_1\dots\kappa^3_8)}$}
        child{node{${\color{yellow}(\kappa^8_1\dots\kappa^8_{10})}$}}
    }
    child{node[xshift= 2cm]{${\color{magenta}(\kappa^4_1\dots\kappa^4_6)}$}
        child{node{$(\kappa^9_1)$}}
    }
    child{node[xshift= 3cm]{${\color{cyan}(\kappa^5_1\dots\kappa^5_{13})}$}
    };
\end{tikzpicture}
\end{center}
Note that in the subsquares $(\kappa^1_2,\kappa^2_1),(\kappa^1_2,\kappa^3_1)$ and $(\kappa^1_5,\kappa^4_1),(\kappa^1_5,\kappa^5_1)$ the exchange occurs actually in the subsquares $(a^1_{2j},a^2_{1\ell}),(a^1_{2j'},a^3_{1\ell'})$ and $(a^1_{5k},a^4_{1\ell''}),(a^1_{5k'},a^5_{1,\ell'''})$, so that it is always acting on different couples of subsquares.
\end{remark}

\begin{remark}
\label{Rem:better_mixing}
The construction of the cyclic flow \eqref{cyclic:flow} gives us only the $L^1_t L^1_x$ estimate on the vector fields, which is what we need for our genericity result. We can get the more refined estimate in $L^\infty_t L^1_x$ allowing for mass flowing (when performing the transposition) during the time evolution of the flow $X_t$ (see Figure \ref{trasp:flow}).
\begin{figure}
    \centering
    \includegraphics[scale=0.5]{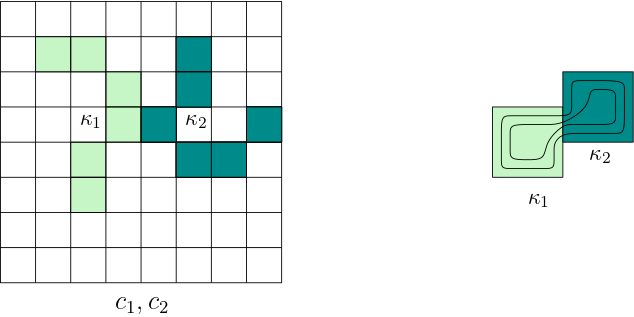}
    \caption{The two adjacent cycles $c_1$ (light green) and $c_2$ (blue) touch in $\kappa_1$ and $\kappa_2$, which exchange their mass during the time evolution.}
    \label{trasp:flow}
\end{figure}
In this case, the time spent by the squares of size $(MD)^{-1}$ to transfer the mass is of order $(MD)^{-1}$, so that the vector field moving it should be of the order
\begin{equation}
\label{Equa:O1_b_t}
\frac{\text{length}}{\text{time}} = \mathcal O(1), \quad \text{acting on a region of area} \ \frac{N-1}{(MD)^2} \leq M^{-2}.
\end{equation}
Hence the $L^\infty_t L^1_x$ estimate can be obtained by \eqref{Equa:comp_ff} as
\begin{equation*}
\|b^c_t - b_t\|_1 \leq \mathcal O(1) M^{-2},
\end{equation*}
while the total variation estimate becomes
\begin{equation*}
\TV(b^c_t-b_t) = \mathcal O(1) \frac{D}{M}.
\end{equation*}
The statement one can prove is then the following.
\begin{proposition}
\label{unique:cycle_star}
	Let $b\in L^\infty_t(\BV_x)$ be a divergence-free vector field and assume that its flow at time $t=1$, namely $X\llcorner_{t=1}$, is a permutation of squares of the grid $\N\times \N\frac{1}{D}$ where $D\in\N$. Then for every $\epsilon>0$ there exist $M=2^p$ and $b^c\in L^\infty_t(\BV_x)$ a divergence-free vector field such that
	\begin{equation*}
		||b^c_t-b_t||_{L^1} \leq \mathcal O(M^{-2}) \leq \epsilon, \quad \TV(b^c_t-b_t)(K) \leq \mathcal{O}(D/M),
	\end{equation*}
	and the map $X^c_{t=1}:K\rightarrow K$, being $X^c_t:[0,1]\times K\rightarrow K$ the flow associated with $b^c$, is a $M^2D^2$-cycle of subsquares of size $\frac{1}{DM}$.
\end{proposition}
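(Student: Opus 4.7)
The plan is to adapt the proof of Proposition \ref{unique:cycle} essentially verbatim, replacing the initial \emph{frozen} phase $[0,\delta]$ (on which $b_t = 0$ and all the transpositions are crammed) by a short \emph{concurrent} window $[0,\tau]$ of length $\tau = (MD)^{-1}$, during which both the original flow $X_t$ and the $N-1$ transpositions run simultaneously. Concretely, I would first apply Lemma \ref{lem:cycl:1} in parallel to each cycle of the permutation $C = X_{t=1}$ (the perturbations live in disjoint subsquares of side $1/D$, one per cycle, so that they do not interfere) to refine every cycle into a cyclic permutation of subsquares of side $1/(MD)$; then construct the tree of adjacent cycles exactly as in Proposition \ref{unique:cycle} and select $N-1 \leq D^2$ pairs $(a,a')$ of adjacent subsquares of side $1/(MD)$, not sitting at corners, where the transpositions merging all cycles into one must take place.

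The sharper bounds come from the scaling announced in Remark \ref{Rem:better_mixing}. Let $b^T_t$, $t \in [0,\tau]$, be the vector field obtained by running in each of the $N-1$ disjoint $(2/(MD)) \times (1/(MD))$ rectangles the transposition flow \eqref{transposition flow}, rescaled in time so as to complete the exchange within $\tau = (MD)^{-1}$. The rescaled field has magnitude $\mathcal O(\text{diameter}/\tau) = \mathcal O(1)$, and by Lemma \ref{TV:rotations} together with the time-rescaling $b^T \mapsto \tau^{-1} b^T$ each rectangle contributes $L^1_x$-mass $\mathcal O((MD)^{-2})$ and total variation $\mathcal O((MD)^{-2}/\tau) = \mathcal O((MD)^{-1})$. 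Summing over the $N-1 \leq D^2$ disjoint supports yields, for every $t\in[0,\tau]$,
\begin{equation*}
\|b^T_t\|_{L^1_x} \leq \mathcal O(M^{-2}), \qquad \TV(b^T_t)(K) \leq \mathcal O(D/M),
\end{equation*}
while $b^T_t \equiv 0$ on $[\tau,1]$.

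The candidate composite flow is $X^c_t \doteq X_t \circ T_t$, where $T_t$ is the flow of $b^T_t$ extended by $T_\tau$ on $[\tau,1]$; by the tree argument of Proposition \ref{unique:cycle}, $X^c_{t=1} = X_1 \circ T_\tau$ is a single $M^2 D^2$-cycle of subsquares of side $1/(MD)$. By the composition formula \eqref{Equa:comp_ff},
\begin{equation*}
b^c(t,y) - b(t,y) = \nabla X_t\bigl(X_t^{-1}(y)\bigr)\, b^T\bigl(t, X_t^{-1}(y)\bigr),
\end{equation*}
which vanishes on $[\tau,1]$; on $[0,\tau]$ it is controlled via \eqref{Equa:formula_comp} and the change of variables of Theorem \ref{change of variables} by $\|b^T_t\|_{L^1_x}$, $\TV(b^T_t)$, $\|b^T_t\|_\infty$, and powers of $\|\nabla X_t\|_\infty$ and $\TV(\nabla X_t)$ restricted to the small set $X_t(\supp b^T_t)$.

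The main obstacle is precisely the pointwise control of $\nabla X_t$ on the transposition supports for $t \in [0,\tau]$: since the hypothesis only gives $b \in L^\infty_t \BV_x$, we have no a priori pointwise bound on $b$ nor a Lipschitz bound on $X_t$. In the applications relevant to Theorem \ref{riassuntivo} the field $b$ is piecewise affine on the subrectangles produced by Theorem \ref{Prop:piececlo} and $X_t$ is Lipschitz with explicit constants, so \eqref{Equa:formula_comp} reduces to the two estimates on $b^T_t$ plus cross terms of strictly lower order in $M^{-1}$. For the general statement I would instead pick, by Fubini applied to $t \mapsto \TV(\nabla X_t)$, a generic $t_\ast \in [0,\tau]$ at which $\nabla X_{t_\ast}$ is uniformly controlled on $\supp (b^T \circ X_{t_\ast}^{-1})$, perform all transpositions in an infinitesimal window around $t_\ast$, and absorb the remaining cross terms of order $\mathcal O((MD)^{-1})$ into the constants hidden in the $\mathcal O(\cdot)$ of the statement, which is permitted since the proposition only claims $\|b^c - b\|_{L^1} \leq \mathcal O(M^{-2}) \leq \epsilon$ with $M$ chosen large in terms of $\epsilon$, $D$ and $\|b\|_{L^\infty_t \BV_x}$.
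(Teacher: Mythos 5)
Your construction is the same as the paper's: Proposition \ref{unique:cycle_star} is only sketched in Remark \ref{Rem:better_mixing}, and your scalings (transposition fields of magnitude $\mathcal O(1)$ completed in time $\tau=(MD)^{-1}$, supported on a region of area $\leq M^{-2}$, giving $\|b^T_t\|_{L^1}=\mathcal O(M^{-2})$ and $\TV(b^T_t)=\mathcal O(D/M)$ via Lemma \ref{TV:rotations}) reproduce exactly \eqref{Equa:O1_b_t} and the two displayed estimates of the remark, and the tree argument for obtaining a single cycle is lifted unchanged from Proposition \ref{unique:cycle}. You are also right that the composition $X_t\circ T_t$ produces the cross term $\nabla X_t(X_t^{-1})\,b^T(t,X_t^{-1}(\cdot))$ and that the sketch silently assumes $\nabla X_t$ is under control; in the only situation where the proposition is meant to be used ($b$ the output of Theorem \ref{Prop:piececlo}, whose flow is Lipschitz and close to a piecewise affine map with explicit constants) this is harmless, and the paper's main line avoids the issue altogether by freezing the flow on $[0,\delta]$.

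The one step I would not accept is your patch for general $b\in L^\infty_t\BV_x$. First, the transposition window cannot be made \emph{infinitesimal}: if the $N-1\leq D^2$ exchanges are completed in a window of length $h$, the rescaled field has total variation $\mathcal O(M^{-2}/h)$, so the claimed bound $\TV(b^c_t-b_t)=\mathcal O(D/M)$ forces $h\gtrsim (MD)^{-1}$; shrinking $h$ below that scale destroys precisely the estimate you are trying to prove (and inflates the $L^1$ bound as well). Second, the Fubini selection of a generic $t_\ast$ at which $\nabla X_{t_\ast}$ is \virgolette{uniformly controlled} presupposes that $t\mapsto\TV(\nabla X_t)$ is a well-defined integrable quantity and that $\nabla X_t\in L^\infty$ for a.e.\ $t$; for the Regular Lagrangian Flow of a merely $\BV$ vector field neither holds ($X_t$ is only approximately differentiable), so no choice of $t_\ast$ rescues the pointwise bound in \eqref{Equa:formula_comp}. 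The honest conclusion is the one you make in passing: either restrict the proposition to the Lipschitz, piecewise affine flows actually produced by Theorem \ref{Prop:piececlo}, or keep the frozen-time device of Proposition \ref{unique:cycle} and accept the weaker $L^1_tL^1_x$ estimate.
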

\end{remark}

\subsection{Density of ergodic vector fields}\label{subS:ergodicity}

Starting from the cyclic permutation we have built in the previous section, we construct an ergodic vector field arbitrarily close to a given vector field in $L^\infty_t \BV_x$. The density of ergodic vector fields is not strictly relevant for the genericity result of weakly mixing vector fields, but it can be considered as a simple case study for the construction of strongly mixing vector fields. Moreover it will give a direct proof of Point \eqref{Point2:main_intro_ergo} of Theorem \ref{Theo:main_intro}.

We will use the \emph{universal mixer} that has been constructed in \cite{univ:mixer}: it is the time periodic divergence-free vector field $u\in L^\infty_t([0,1], BV_x(\R^2))$ whose flow $U_t:[0,1]\times K\rightarrow K$ of measure-preserving maps realizes at time $t=1$  the \emph{folded Baker's map}, that is
\begin{equation}
    \label{baker}
    U=U\llcorner_{t=1}=\begin{cases}
        \left(-2x+1,-\frac{y}{2}+\frac{1}{2}\right) & x\in\left[0,\frac{1}{2}\right), \\
         \left(2x-1,\frac{y}{2}+\frac{1}{2}\right) & x\in\left(\frac{1}{2},1\right],
    \end{cases} \qquad y \in [0,1],
\end{equation}
(see  Theorem $1$, \cite{univ:mixer}).

\begin{proposition}
\label{Prop:ergod_dense}
Let $b\in L^\infty_t(\BV_x)$ and let $X_t
$ be its RLF, and assume that $X_{t=1}$ is a cyclic permutation of squares of the grid $\N\times\N\frac{1}{D}$. Then there exists $b^e\in L^\infty_t(\BV_x)$ divergence-free ergodic vector field such that 
\begin{equation}\label{BV:ergodic}
\begin{split}
	&	||b-b^e||_{L^\infty(L^1)}\leq \mathcal{O}\left(\frac{1}{D^2}\right),  \\ & ||\TV(b^e)(K)||_{\infty}\leq  ||\TV(b)(K)||_{\infty} + \mathcal{O}\left(\frac{1}{D^2}\right).
		\end{split}
	\end{equation}
\end{proposition}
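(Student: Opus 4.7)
The idea is to exploit the ``ergodicity on the quotient'' already provided by the hypothesis: since $C:=X_{t=1}$ is a cyclic permutation of the $D^2$ subsquares $\kappa_1,\dots,\kappa_{D^2}$ of the grid $\N\times\N\frac{1}{D}$, it is already ergodic on the finite space of centres, and it only remains to inject ergodicity inside a single subsquare $\kappa=\kappa_1$. The natural gadget is a rescaled copy of the universal mixer $U$ of \cite{univ:mixer} localised to $\kappa$, and the target is to build $b^e\in L^\infty_t\BV_x$ whose RLF at time $t=1$ equals
\begin{equation}\label{Equa:target_Xe}
X^e_{t=1}=C\circ U^\kappa,
\end{equation}
with $U^\kappa=\chi_\kappa^{-1}\circ U\circ\chi_\kappa$ on $\kappa$ (where $\chi_\kappa:\kappa\to K$ is the affine rescaling) and $U^\kappa=\Id$ on $K\setminus\kappa$.

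\textbf{Construction and estimates.} First I would set $u^\kappa_t(x)=D^{-1}u(t,\chi_\kappa(x))\ind_\kappa(x)$, whose RLF is $U^\kappa_t$. A direct scaling computation yields $\|u^\kappa_t\|_{L^1(K)}=\mathcal O(D^{-3})$, and the total variation splits into an interior contribution of size $\mathcal O(D^{-2})$ (from the oscillation of $u$) and a tangential boundary jump on $\partial\kappa$ (perimeter $4/D$, amplitude $\mathcal O(D^{-1})$) also of size $\mathcal O(D^{-2})$, hence $\TV(u^\kappa_t)(K)=\mathcal O(D^{-2})$. Then, following the concatenation scheme of Proposition \ref{unique:cycle}, I would pick $\delta>0$ and set
\begin{equation*}
b^e_t=\begin{cases} \delta^{-1}u^\kappa_{t/\delta}, & t\in[0,\delta], \\ b_t, & t\in[\delta,1], \end{cases}
\end{equation*}
after a preliminary reduction of $b$ to a nearby vector field vanishing on $[0,\delta]$ (the same trick as in Proposition \ref{unique:cycle}, which is automatic in the approximation chain of Section \ref{Ss:proof_main_intro}). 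Then $X^e_\delta=U^\kappa$ and on $[\delta,1]$ one has $X^e_t=X_t\circ U^\kappa$, so \eqref{Equa:target_Xe} holds. The $L^\infty_t\TV$-estimate in \eqref{BV:ergodic} follows from \eqref{Equa:formula_comp} together with the $\mathcal O(D^{-2})$ bound on $\TV(u^\kappa)$, while the $L^\infty_t L^1_x$-bound comes from $\|b^e_t-b_t\|_{L^1(K)}\le\delta^{-1}\|u^\kappa\|_{L^\infty L^1}=\mathcal O(\delta^{-1}D^{-3})$, which is $\mathcal O(D^{-2})$ as soon as $\delta$ is chosen of order $D^{-1}$ (or simply $\delta=1$, spreading $u^\kappa$ on the whole time interval).

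\textbf{Ergodicity.} For a $T$-invariant set $A$, with $T:=C\circ U^\kappa$, I would observe that, since $U^\kappa$ is the identity on $K\setminus\kappa$ and $C$ is a cyclic permutation of length $D^2$, the $T$-orbit of any $x\in\kappa$ takes exactly $D^2$ steps to return to $\kappa$ and along the way $U^\kappa$ acts trivially; the direct computation analogous to the one used inside Proposition \ref{unique:cycle} gives $T^n(x)=C^n(U^\kappa(x))$ for $x\in\kappa$ and $n<D^2$, hence $T^{D^2}\rest_\kappa=U^\kappa$. Ergodicity of $U^\kappa$ on $(\kappa,\L^2\rest_\kappa)$ then forces $|A\cap\kappa|\in\{0,|\kappa|\}$, and the cyclic action of $C$ together with the $T$-invariance and measure-preservation of $A$ yields $|A\cap\kappa_i|=|A\cap\kappa|$ for every $i=1,\dots,D^2$; summing gives $|A|\in\{0,1\}$.

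\textbf{Main obstacle.} The delicate point is realising the target \eqref{Equa:target_Xe} while keeping both $\|b-b^e\|_{L^\infty L^1}$ and the BV defect $\|\TV(b^e)\|_\infty-\|\TV(b)\|_\infty$ of size $\mathcal O(D^{-2})$. The fully additive choice $b^e=b+u^\kappa$ has the correct $\mathcal O(D^{-3})$ and $\mathcal O(D^{-2})$ quantitative control but does \emph{not} realise the clean composition \eqref{Equa:target_Xe} (because $b$-trajectories enter and leave $\kappa$, so the mixer and the drift intertwine), while the na\"ive time concatenation loses $\mathcal O(\|b\|_{L^\infty L^1})$ on the insertion interval. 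The workaround I expect to adopt is the reduction to $b_t\equiv 0$ on a short initial interval $[0,\delta_0]$, which is inherited from the approximation chain producing the hypothesis on $X_{t=1}$; the alternative route of tracking $\kappa$ along the RLF via a moving mixer $\big(DX_t\cdot u^\kappa_t\big)\circ X_t^{-1}$ would avoid the reduction but requires pointwise control of $\nabla X_t$ not available for generic BV flows.
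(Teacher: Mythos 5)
Your overall strategy (ergodicity on the finite quotient of subsquares plus a single rescaled universal mixer inside one subsquare) and your ergodicity argument are exactly the paper's: the identity $T^{D^2}\rest_{\kappa_1}=U^\kappa$, ergodicity of the folded Baker's map on $\kappa_1$, and propagation of $|A\cap\kappa_i|$ along the cycle. The paper phrases this last step as a proof by contradiction, but the content is the same.

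The construction is where you diverge, and your chosen route does not deliver the estimate as stated. The paper does not freeze the evolution: it sets $X^e_t=X_t\circ U^1_t$ for $x\in\kappa_1$ and $X^e_t=X_t$ otherwise, where $U^1_t=\theta^{-1}\circ U_t\circ\theta$ is the rescaled mixer \emph{flow}, which maps $\kappa_1$ into itself for every $t$. Composing on the right, i.e.\ on the Lagrangian labels, means the mixer and the drift never ``intertwine'': by \eqref{Equa:comp_ff} the perturbation $b^e_t-b_t$ is, at each fixed time, supported on $X_t(\kappa_1)$ with $\|b^e_t-b_t\|_{L^1}=\mathcal O\bigl(\|\nabla X_t\|_\infty D^{-3}\bigr)$ and $\TV(b^e_t-b_t)=\mathcal O(D^{-2})$ uniformly in $t$; this is precisely what produces the $L^\infty_t L^1_x$ bound in \eqref{BV:ergodic}. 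Your concatenation on $[0,\delta]$ cannot: the ``preliminary reduction of $b$ to a nearby vector field vanishing on $[0,\delta]$'' is a time reparametrization which, at a fixed time $t<\delta$, changes $b_t$ by an amount of order $\|b\|_{L^\infty L^1}$, not $\mathcal O(D^{-2})$. This is exactly why Propositions \ref{unique:cycle} and \ref{Prop:smix_dense} carry the extra hypothesis that $b_t$ vanishes near $t=0$ and only claim $L^1_tL^1_x$ bounds, whereas Proposition \ref{Prop:ergod_dense} has no such hypothesis and claims $L^\infty_tL^1_x$. Your parenthetical ``$\delta=1$'' does not rescue the formula you wrote (it would erase $b$ altogether); what it should mean is the composition $X_t\circ U^1_t$. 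Finally, your objection to that route --- that it needs pointwise control of $\nabla X_t$, unavailable for generic BV flows --- is a fair observation which the paper's proof passes over in silence; in the only place the proposition is invoked, however, $b$ is the output of the permutation/cycle constructions, whose flows are piecewise affine with $\|\nabla X_t\|_\infty$ uniformly bounded, so the composition estimate \eqref{Equa:formula_comp} applies.
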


\begin{proof}
Let us call $T=X\llcorner_{t=1}$ and $\kappa_1,\dots,\kappa_{D^2}$ the subsquares of the grid where the numbering is chosen such that
\begin{equation*}
    T(\kappa_i)=\kappa_{i+1},\quad T(\kappa_n)=\kappa_1.
\end{equation*}Let us define
\begin{equation*}
    X^e_t=\begin{cases}
        X_t\circ U^1_t & x\in\kappa_1, \\
        X_t & \text{otherwise},
    \end{cases}
\end{equation*}
where the flow $U^1_t=\theta^{-1}\circ U_t\circ \theta$ and $\theta$ is the affine map from $\kappa_1$ to $K$, i.e. $\theta(x,y)=(Dx,Dy)$. \\

\noindent
We first prove the ergodicity of $T^e=X^e\llcorner_{t=1}$. Assume by contradiction that $T^e$ is not ergodic, then there exists a measurable set $B$ such that $T^e(B)=B$ and $0<|B|<1$. We claim that $|B\cap\kappa_1|>0$. Indeed, since $|B|>0$ there exists $i$ such that $|B\cap\kappa_i|>0$. If $i=1$ we have nothing to prove, if not, since $T^e$ is measure-preserving, then $|T^e(B\cap \kappa_i)|>0$. But
\begin{equation*}
    0<|T^e(B\cap \kappa_i)|=|T^e(B)\cap T^e(\kappa_i)|=|B\cap \kappa_{i+1}|
\end{equation*}
(we have used that the set $B$ is invariant) and re-applying the map $T^e$ sufficiently many times we have the claim. Moreover, $|B\cap \kappa_1|<\frac{1}{D^2}$. If not, that is $|B\cap\kappa_1|=\frac{1}{D^2}$, then $|B\cap\kappa_i|=\frac{1}{D^2}$ for every $i=1,\dots,D^2$, again by using the fact that $B$ is invariant and that $T^e(\kappa_i)=\kappa_{i+1}$ and $T^e(\kappa_{D^2})=\kappa_1$. But now
\begin{equation*}
    |B|=\sum_{i=1}^{D^2} |B\cap\kappa_i|=\sum_{i=1}^{D^2}\frac{1}{D^2}=1,
\end{equation*}
which is a contradiction, since $|B|<1$. Now, the fact that $0<|B\cap\kappa_1|<\frac{1}{D^2}$ implies that $U^1_1(B\cap\kappa_1)\not=B\cap\kappa_1$ because $U^1_1$ is mixing (and thus ergodic). But this is a contradiction because $T^e(B\cap\kappa_1)=B\cap\kappa_2$ and applying to both of them $T^{D^2-1}$ we find that
\begin{equation*}
   U^1_1(B\cap\kappa_1)=T^{D^2-1}(T^e(B\cap\kappa_1))=T^{D^2-1}(B\cap\kappa_2)=(T^e)^{D^2-1}(B\cap\kappa_2)=B\cap\kappa_1,
\end{equation*}
where we have used that $T^{D^2}=Id$.
To prove the estimates \eqref{BV:ergodic} we have to observe first that $U_t$  acts only on $\kappa_1$, then that it is the composition of two rotations (see [Figure 1, \cite{univ:mixer}]), that is $\TV(\dot{U}_t(U^{-1}_t))(\bar{\kappa}_1)\leq \mathcal{O}\left(\frac{1}{D^2}\right)$ (see again Lemma \ref{TV:rotations}). 
\end{proof}

\subsection{Density of strongly mixing vector fields}
\label{S:mixing_maps}

As in the previous section, we use the density of cyclic permutations to show that the vector fields whose flow is strongly mixing are dense in $\mathcal{U}$ with the $L^1_{t,x}$-topology. Again we use the universal mixer constructed in \cite{univ:mixer}. The main result here is the following

\begin{proposition}
\label{Prop:smix_dense}
Let $b\in L^\infty_t(\BV_x)$ and let $X_t$ be its RLF, and assume that $b_t = 0$ for $t \in [0,2\delta]$ and $X_{t=1}$ is a cyclic permutation of squares of the grid $\N\times\N\frac{1}{D}$, $D = 2^p$. Then there exists $b^s \in L^\infty_t(\BV_x)$ divergence-free strongly mixing vector field such that 
\begin{equation}
\label{BV:strongl}
\begin{split}
	&||b-b^s||_{L^1(L^1)}\leq \mathcal{O}\left(\frac{1}{\delta D}\right),  \\ & ||\TV(b^s)(K)||_{\infty}\leq  ||\TV(b)(K)||_{\infty} + \mathcal{O}\left(\delta^{-1}\right).
		\end{split}
	\end{equation}
\end{proposition}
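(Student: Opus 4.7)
My plan is to install an explicit mixing perturbation in the frozen window $[0,2\delta]$, using two stages of the rescaled universal mixer $U$ of \eqref{baker}. On $[0,\delta]$ I would partition $K$ into $D^2/2$ horizontal pair-rectangles $R^H_k$ (each the union of two horizontally adjacent subsquares) and in each $R^H_k$ run a spatially rescaled, time-compressed copy of $U$; denote the resulting time-$\delta$ map by $H$. On $[\delta,2\delta]$ I would repeat the construction with a vertical pair partition $\{R^V_k\}$, obtaining $V$. On $[2\delta,1]$ I set $b^s_t = b_t$. Then $X^s_1 = T \circ V \circ H$, where $T = X_1$ is the given cyclic permutation of subsquares.

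The key computation exploits the fact that the folded Baker's map $U$ sends the left half of the unit square onto its bottom half and the right half onto its top half. Rescaling to a pair, this means $H$ transfers exactly half of the mass of each subsquare $\kappa_i$ to its horizontal partner $h(i)$ while keeping the other half in $\kappa_i$. Iterating, $V \circ H$ spreads $\kappa_i$ uniformly among the four subsquares $\{\kappa_i, h(i), v(i), v(h(i))\}$, each receiving a quarter of the mass; composing with $T$ shows that $X^s_1$ sends the $\sigma$-algebra generated by subsquares of side $1/D$ into itself. Consequently $X^s_1$ induces on the finite space $\Omega$ of \eqref{Equa:Omega_intro} a Markov shift with transition matrix $P$ whose $i$-th row equals $\tfrac{1}{4}$ on $\{T(i), T(h(i)), T(v(i)), T(v(h(i)))\}$ and vanishes elsewhere, with uniform invariant measure.

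The core analytic step is to verify that $P$ is aperiodic. Irreducibility is immediate from the pure-cycle path $i \mapsto T(i) \mapsto T^2(i) \mapsto \cdots$. For aperiodicity I will exhibit coprime return times at some base state: the pure-cycle path returns in $D^2$ steps, while a single deviation using $h$ or $v$ replaces $T$ with $T \circ h$ or $T \circ v$ at one step, producing a return whose length differs from $D^2$ by a small integer $k$ determined by how $h, v$ act in the cyclic $T$-ordering; combining two suitably chosen deviations (and using $D = 2^p$) yields return times with $\gcd = 1$. Then Proposition \ref{markov:mix} gives (exponentially) strong mixing of the Markov shift, hence strong mixing of $X^s_1$ whenever the test sets are unions of subsquares. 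To upgrade this to strong mixing on all of $\mathcal{B}(K)$, I exploit that $U$ itself is mixing on the unit square: iterates of $X^s_1$ act on each pair as high iterates of $U$ composed with permutations, so the partition into subsquares is refined at an exponential rate inside each pair, and Markov mixing at the coarse scale combined with mixing of $U$ at finer scales gives decay of correlations for indicator functions of arbitrarily small subsquares, hence for all $L^2$ observables by density.

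The estimates \eqref{BV:strongl} follow by scaling. Each rescaled copy of $U$ on a pair of area $2/D^2$ run over the time window $\delta$ contributes, via \eqref{Equa:formula_comp} and Lemma \ref{TV:rotations}, a total variation of order $\delta^{-1} D^{-2}$ uniformly in $t$; summing over $D^2/2$ pairs gives $\mathcal O(\delta^{-1})$ additional BV, while outside $[0,2\delta]$ no perturbation is added. The perturbation velocity is of order $1/(D\delta)$ on a support of measure $\mathcal O(1)$ during $[0,2\delta]$, which yields the $L^1(L^1)$ estimate. The main difficulty in the argument is the upgrade from mixing of the induced Markov chain on the discrete set $\Omega$ to genuine strong mixing of the continuous map $X^s_1$ on $\mathcal{B}(K)$: the subsquare partition is \emph{not} a Markov partition for $X^s_1$ (the image of a subsquare is a half-slab, not a union of subsquares of the same grid), so some care is required to pass from the finite-state statement to the measure-theoretic one, and this is where the intrinsic mixing of the universal mixer $U$ combined with a refinement by iterates becomes essential.
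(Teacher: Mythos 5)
Your construction is the same in outline as the paper's: freeze the flow on $[0,2\delta]$, run two interleaved rounds of rescaled folded Baker's maps on pairs of adjacent subsquares so that each square exchanges mass with two neighbours, compose with the cyclic permutation, and reduce to a finite Markov chain on $\Omega$; the $L^1$ and BV estimates are also obtained exactly as you describe. However, there are two genuine gaps at the steps that carry the proof.

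First, the upgrade from mixing of the induced chain on $\Omega$ to strong mixing of $X^s_1$ on $\mathcal B(K)$. You correctly identify this as the main difficulty (and correctly note that the $1/D$-grid is not a Markov partition --- which contradicts your earlier claim that $X^s_1$ preserves the $\sigma$-algebra generated by the subsquares; the image of a subsquare under one round of $U$ is a half-slab spanning its pair), but you do not resolve it: ``some care is required \dots the intrinsic mixing of $U$ at finer scales becomes essential'' is a gesture, not an argument. The paper closes this gap concretely: it tracks dyadic rectangles $a = 2^{-2p}[k,k+1]\times 2^{-2p'}[k',k'+1]\frac{1}{D}$ and shows that forward iterates $(T^s)^q a$ eventually become disjoint unions of rectangles of full horizontal width $1/D$, while backward iterates $(T^s)^{-q'}a'$ become unions of full-height vertical rectangles; Fubini then gives $\mathcal L^2\big((T^s)^q a\cap (T^s)^{-q'}a'\big)=\sum_i D^2\, m_i(q)\, m_i'(-q')$, so strong mixing reduces \emph{exactly} to convergence of the mass vectors $m_i(q)$, whose evolution at that stage is genuinely governed by the finite matrices. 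Some version of this geometric bookkeeping is unavoidable and is missing from your argument.

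Second, aperiodicity. Your coprime-return-time argument is not carried out, and with your horizontal/vertical pairing it is not obviously available: the graph generated by $h$ and $v$ decomposes into $2\times2$ blocks, so the natural contraction argument only forces modulus-one eigenvectors to be constant blockwise, and whether a deviation produces a return time coprime to $D^2$ depends on how the arbitrary cycle $T$ sits relative to the fixed geometric pairing. The paper avoids this by pairing consecutive squares along a single chain enumeration, interleaving even and odd pairs: then the two averaging matrices together force any eigenvector with $|\lambda|=1$ to be globally constant, independently of the permutation, giving a simple eigenvalue $1$ and hence aperiodicity and exponential mixing. You would either need to adopt such a chain pairing or actually prove the gcd claim for your pairing.
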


In the proof it is shown that the mixing is actually exponential, in the sense that for every set in a countable family of sets $\{B_i\}_i$ generating the Borel $\sigma$-algebra it holds
\begin{equation*}
\big| T^q(B_i) \cap B_j \big| - |B_i| |B_j| = \mathcal O(1) c_{ij}^q, \quad c_{ij} < 1.
\end{equation*}

\begin{proof}
Let us call $T=X\llcorner_{t=1}$ and $\kappa_1,\dots,\kappa_{D^2}$ the subsquares of the grid where the numbering is chosen such that
\begin{equation*}
    T(\kappa_i)=\kappa_{i+1},\quad T(\kappa_{D^2})=\kappa_1.
\end{equation*}
If $\{1,\dots,D^2\} \ni \ell \mapsto j(\ell) \in \{1,\dots,D^2\}$ is an enumeration of $\kappa_i$ such that $\kappa_{j(\ell)},\kappa_{j(\ell+1)}$ are adjacent, consider the rescaled universal mixer $U^{\ell,\ell+1}_t$ acting on $\kappa_\ell,\kappa_{\ell+1}$ in the time interval $[0,\delta]$, whose generating vector field $b^{U^{\ell,\ell+1}}$ satisfies the estimates
\begin{equation*}
\|b^{U^{\ell,\ell+1}}_t\|_{L^1} = \mathcal O(1) \frac{1}{\delta} \frac{1}{D^3}, \quad \TV(b^{U^{\ell,\ell+1}}_t) = \mathcal O(1) \frac{1}{\delta} \frac{1}{D^2}.
\end{equation*}

The idea is to define the a new vector field as in \eqref{cyclic:flow}
	\begin{equation*}
		X^s_t(x)=\begin{cases}
			M_t(x) & t\in [0,2\delta], \\
			X_{t}\circ M_{2\delta} (x) & t\in [2\delta,1],
		\end{cases}
	\end{equation*}
where the map $M_t$, $t \in [0,2\delta]$, is defined as follows: 
\begin{equation*}
M_t(x) = \begin{cases}
U^{\ell,\ell+1}_t(x) & t \in [0,\delta], \ell \ \text{even}, \\
U^{\ell,\ell+1}_t(x) & t \in [\delta,2\delta], \ell \ \text{odd}.
\end{cases}
\end{equation*}
The estimates \eqref{BV:strongl} follows as in Proposition \ref{Prop:ergod_dense}, so we are left with the proof that $T^s = X^s_1$ is strongly mixing.

The map $T^s$ is the composition of $3$ maps $T_3 \circ T_2 \circ T_1$ acting as follows (all indexes should be indended modulus $D^2$):
\begin{enumerate}
\item $T_1$ is the folded Baker's map $U$ acting on the couples $\ell,\ell+1$, $\ell = 0,2,\dots$ even;
\item $T_2$ is the folded Baker's map $U$ acting on the couples $\ell,\ell+1$, $\ell = 1,3,\dots$ odd;
\item $T_3$ is a cyclic permutation $\ell \to j^{-1}(j(\ell) + 1)$.
\end{enumerate}
We first compute the evolution of a rectangle $a$ of the form
\begin{equation*}
a = 2^{-p}[k,k+1] \times 2^{-p'} [k',k'+1]\frac{1}{D}, \quad k = 0,\dots, 2^{p} D -1, \ k' = 0,\dots,2^{p'} D -1, \ p,p' \in \N. 
\end{equation*}
By definition of $U$ \eqref{baker} we obtain that if $p \geq 1$ then the map $T_1$ does not split $a$ into disjoint rectangles, i.e. 
\begin{equation*}
T_1 a = 2^{1-p} [\tilde k,\tilde k+1] \times 2^{-p'-1}[\tilde k',\tilde k'+1] \frac{1}{D}, \quad \tilde k= 0,\dots, 2^{p-1} D-1, \tilde k'= 0,\dots,2^{p'+1} D -1, 
\end{equation*}
and the same happens for $T_2$:
\begin{equation*}
T_2 a = 2^{1-p} [\hat k,\hat k+1] \times 2^{-p'-1} [\hat k',\hat k'+1] \frac{1}{D}, \quad \hat k = 0,\dots, 2^{p-1} D-1, \hat k'= 0,\dots,2^{p'+1} D -1. 
\end{equation*}
Hence if
\begin{equation}
\label{Equa:a_rect_baker}
a = 2^{-2p}[k,k+1] \times 2^{-2p'} [k',k'+1]\frac{1}{D}, \quad k = 0,\dots, 2^{2p} D -1, \ k' = 0,\dots,2^{2p'} D -1, \ p,p' \in \N, 
\end{equation}
then
\begin{equation*}
T_2 \circ T_1 a = 2^{2(1-p)} [\check k,\check k+1] \times 2^{-2(p'+1)} [\check k',\check k'+1] \frac{1}{D}, \quad \check k = 0,\dots, 2^{2(p-1)} D-1, \check k'= 0,\dots,2^{2(p'+1)} D -1,
\end{equation*}
and being the action of $T_3$ just a permutation, the final form $T^s a = T_3 \circ T_2 \circ T_1 a$ is again a rectangle. \\
When $p=0$, instead the rectangle $a$ is mapped into two rectangles belonging to two different subsquares $\kappa,\kappa'$
\begin{equation*}
T_1 a = [\tilde k_1,\tilde k_1+1] \times 2^{-p'-1} [\tilde k_1',\tilde k_1' + 1] \frac{1}{D} \cup [\tilde k_2,\tilde k_2+1] \times 2^{-p'-1} [\tilde k_2',\tilde k_2' + 1] \frac{1}{D},
\end{equation*}
and the action of $T_2$ divides $T_1 a$ into 4 rectangles of horizontal length $1/D$ belonging to 4 different subsquares. As before, $T_3$ just shuffles them into new locations. \\
The same happens when considering $(T^s)^{-1}$: if $p' \leq 1$ and $a$ is given by \eqref{Equa:a_rect_baker} then $(T^s)^{-1} a$ is still a rectangle of side $2^{-2(p+1)} \times 2^{-2(p'-1)} \frac{1}{D}$, while for $p'=0$ it is split into 4 rectangles with vertical size equal to $1/D$.

In particular, starting from two squares $a,a'$ of side $(2^{-2p} D)^{-2}$, for $q \geq p$ the set $(T^s)^{q} a$ is made of disjoint rectangles whose horizontal side is $D^{-1}$, and $(T^s)^{-q} a'$ is made of disjoint rectangles whose vertical side is $D^{-1}$. Hence if the masses of $(T^s)^q a$, $(T^s)^{-q'} a'$ inside $\kappa_i$ are $m_i(q)$, $m_i'(-q')$, then by Fubini
\begin{equation*}
\mathcal L^2 \big( (T^s)^{q} a \cap (T^s)^{-q'} a' \big) = \sum_{i=1}^{D^2} D^2 m_i(q) m'_i(-q').
\end{equation*}
In order to prove the strong mixing it is enough to show that
\begin{equation*}
m_i(q) \to \frac{\mathcal L^2(a)}{D^2}, \ m'_i(-q') \to \frac{\mathcal L^2(a')}{D^2} \quad q,q' \to \infty.
\end{equation*}
Actually, we will show that the above convergence is exponential, which implies that the mixing is exponential. We prove the above exponential convergence for $m_i(q)$, the other being completely similar.

Once $(T^s)^q a$ has become a rectangle of horizontal side $1/D$, the distribution of mass by $T^s$ is computed by the action of the following matrices on the vector $(m_i)_i$:
\begin{enumerate}
\item the matrix $A_1$ corresponding to the map $T_1$,
\begin{equation*}
(A_1)_{\ell'\ell} = \frac{1}{2} \begin{cases}
\delta_{\ell'\ell} + \delta_{\ell'(\ell-1)} & \ell' = 0,2,\dots, \\
\delta_{\ell'(\ell+1)} + \delta_{\ell'\ell} & \ell' = 1,3,\dots;
\end{cases} 
\end{equation*}
\item the matrix $A_2$ corresponding to the map $T_2$,
\begin{equation*}
(A_2)_{\ell'\ell} = \frac{1}{2} \begin{cases}
\delta_{\ell'\ell} + \delta_{\ell'(\ell+1)} & \ell' = 0,2,\dots, \\
\delta_{\ell'(\ell-1)} + \delta_{\ell'\ell} & \ell' = 1,3,\dots;
\end{cases} 
\end{equation*}
\item the permutation matrix $A_3$ corresponding to $T_3$.
\end{enumerate}
Being the Markov process generated by the matrix $P = A_3 A_2 A_1$ finite dimensional, exponential mixing is equal to strong mixing, and we prove directly that $P$ has a simple eigenvalue of modulus $1$ whose eigenvector is necessarily the uniform distribution $(1/D^2,1/D^2,\dots)$: in particular this gives that $P$ is aperiodic (Definition \ref{aperiodic} and Proposition \ref{markov:mix}). Indeed, for $v \in \C^D$ one considers the functional $|v|$, and by simple computations it holds $|A_3 v| = |v|$ and
\begin{equation*}
|A_1 v| = |v| \quad \text{iff} \quad v_\ell = v_{\ell+1} \ \text{for} \ \ell=0,2,\dots,
\end{equation*}
\begin{equation*}
|A_2 v| = |v| \quad \text{iff} \quad v_{\ell} = v_{\ell+1} \ \text{for} \ \ell=1,3,\dots.
\end{equation*}
Hence the unique $v$ such that $|Av|=|v|$ is $v=(1/D^2,1/D^2,\dots)$, and $1$ is a simple eigenvector.
\end{proof}

\begin{remark}
\label{Rem:as_before}
As in Remark \ref{Rem:better_mixing}, one could let the Bakers map to act during the time evolution of $X_t$, but in this case the distance in $L^\infty L^1$ would be of order $1$. The problem is that the maps $T_1,T_2$ are acting on the whole set $K = [0,1]^2$, and the vector field $b^s_t - b_t$ is of order $1$ as in \eqref{Equa:O1_b_t}.
\end{remark}

\subsection{Proof of the density of strongly mixing vector fields}
\label{Ss:proof_main_intro}

We are now ready to prove the density of strongly mixing vector fields in $\mathcal{U}$, which implies the statement by Corollary \ref{Cor:strong_mix_dense}. It will be obtained through the following steps.

\begin{enumerate}
    \item Let $b \in \mathcal{U}$: by the very construction of the set $\mathcal{U}$ (Proposition \ref{prop:gen}), we can assume that $b \in L^\infty_t \BV_x$. Fix $\epsilon > 0$.
    \item By the continuity of translation in $L^1$, we can take $0 < \delta \ll 1$ such that defining
    \begin{equation*}
        b^\delta = \begin{cases}
        0 & t \in [0,3\delta), \\
    \frac{1}{1-3\delta} b_{(t-3\delta)/(1-3\delta)} & t \in [3\delta,1],
        \end{cases}
    \end{equation*}
    it holds
    \begin{equation*}
        \|b^\delta - b\|_{L^1_{t,x}} < \frac{\epsilon}{4}.
    \end{equation*}
    Since
    \begin{equation*}
        \|\TV(b^\delta)\|_\infty = \frac{1}{1-3\delta} \|\TV(b)\|_\infty
    \end{equation*}
    then $b^\delta \in \mathcal{U}$. Clearly we can also assume that $b^\delta$ is compactly supported in $K$.
    \item Use Theorem \ref{Prop:piececlo} to approximate $b^\delta$ in $[3\delta,1]$ with a vector field $b^{\epsilon\delta} \in L^\infty_t \BV_x \subset \mathcal{U}$ such that
    \begin{equation*}
        \|b^\delta - b^{\epsilon\delta}\|_{L^1_{t,x}} < \frac{\epsilon}{4},
    \end{equation*}
    and such that its RLF is a permutation of squares of size $D^{-1}$. We can assume that
    \begin{equation}
        \label{Equa:delta_D_5_eps}
        D \gg \frac{1}{\epsilon \delta}.
    \end{equation}
    \item Apply Lemma \ref{lem:cycl:1} together with Proposition \ref{unique:cycle} to $b^{\epsilon\delta}$ for $t \in [2\delta,1]$ obtaining a new vector field $b^{\epsilon\delta c} \in L^\infty_t \BV_x \subset \mathcal{U}$ such that
    \begin{equation*}
        \|b^{\epsilon\delta} - b^{\epsilon\delta c}\|_{L^1_{t,x}} \leq \mathcal O \bigg( \frac{1}{DM} \bigg) < \frac{\epsilon}{4}
    \end{equation*}
    for $M = 2^{p'} \gg 1$, and
    such that its RLF is a single cycle of squares of size $(DM)^{-1}$. 
    \item Finally, apply Proposition \ref{Prop:smix_dense} to $b^{\epsilon\delta c}$ in $t \in [0,1]$ obtaining a strongly (exponentially) mixing vector field $b^{\epsilon\delta cs} \in L^\infty_t \BV_x \subset \mathcal{U}$ such that
    \begin{equation*}
        \|b^{\epsilon\delta c} - b^{\epsilon\delta cs}\|_{L^1_{t,x}} \leq \mathcal O \bigg( \frac{1}{\delta D} \bigg) < \frac{\epsilon}{4}
    \end{equation*}
    by using \eqref{Equa:delta_D_5_eps}.
\end{enumerate}

We thus conclude that for every $b \in L^\infty_t \BV_x$ and $\epsilon > 0$ there is a vector field $b^s \in L^\infty_t \BV_x$ exponentially mixing such that
\begin{equation*}
    \|b - b^s\|_{L^1_{t,x}} < \epsilon,
\end{equation*}
which is our aim.

\section{Permutation Flow} \label{S:permutation}

In this section we prove the key tool of this paper, namely the approximation in $L^1$ of any BV vector field with another BV vector field such that its flow at $t=1$ is a permutation of subsquares, i.e. it is a rigid translation of subsquares of a grid partition of $K = [0,1]^2$. The approach is inspired by \cite{Shnirelman}, with the additional difficulty that we need to control the BV norm of the approximating vector field. We will address also the $d$-dimensional case, explaining the additional technicalities needed to prove the same approximation result in the general case.

\noindent This section is divided into two parts: in the first one we collect some preliminary estimates which will be used as building blocks in the proof of the main theorem, while in the second part we state the main approximation theorem and give its proof.

\subsection{Affine approximations of smooth flows}
\label{Ss>pert_flow}

The next lemma is almost the same of \cite[Lemma 4.3]{Shnirelman}. In order to follow the original Shnirelman's Lemma we require the subrectangles in the next lemma to be dyadic (i.e. their corners belong to a dyadic partition, see Remark \ref{Shnirelman: sbaglio} however), but we notice that the proof of the main theorem works in the same way just asking subrectangles with rational coordinates to be mapped affinely onto subrectangles with rational coordinates. At the end this section we will address the same lemma in the general case $d > 2$, which in the original paper is not proved.

\noindent Let $T$ be a measure-preserving diffeomorphism $T:[0,1]^2\rightarrow [0,1]^2$ of class $C^3$ and such that $T = \Id$ in a neighborhood of $\partial [0,1]^2$. Assume that it is close to the identity, i.e. there exists $\delta>0$ sufficiently small such that $||T-\Id||_{\mathcal{C}^1}\leq\delta$.

\begin{lemma}
\label{lem: dyadic}
	 There exists $N\in\N$, $N=2^{p}$, and a path of measure-preserving invertible maps $t\rightarrow \sigma_t$ piecewise smooth w.r.t. the time variable $t$ such that $\sigma_0=T$ and $\sigma_1$ maps arbitrarily small dyadic rectangles $P_{ij} \in \N \times \N\frac{1}{N} = K_N$ (meaning that their boundaries are in the net $K_N$) affinely onto dyadic rectangles $\tilde P_{ij} \in K_N$.
	
\noindent	Moreover, the map $\sigma$ is of the form
	\begin{equation}
\label{Equa>pertur_form}
\sigma_t = T \circ \xi_{3t} \ind_{[0,1/3]}(t) + \zeta_{3t-1} \circ T \circ \xi_1 \ind_{[1/3,2/3]}(t) + \eta_{3t-2} \circ \zeta_1 \circ T \circ \xi_1 \ind_{[2/3,1]}(t).
\end{equation}
	where $\xi, \eta : [0,1] \times [0,1]^2 \to [0,1]^2$ are piecewise smooth and $\zeta : [0,1] \times [0,1]^2 \to [0,1]^2$ is smooth, so that for every $t\in[0,1]$, the map $\sigma_t$ is piecewise smooth on each subrectangle $\kappa$ and it extends continuosly on $\bar{\kappa}$. \\
	Finally, the space differential $D\sigma_1$ of $\sigma_{t=1}$ is a constant diagonal matrix in each subrectangle.
\end{lemma}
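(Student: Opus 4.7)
The plan is to produce $\sigma_t$ in the three stages dictated by \eqref{Equa>pertur_form}, each a measure-preserving isotopy that progressively flattens $T$ onto the structure of a dyadic grid $K_N$ with $N = 2^p$. I first pick $N$ large enough that $T$ moves each point by much less than $1/N$, so that preimages of grid lines under $T$ are $C^3$-graphs close to the originals; this uses the $C^1$-smallness of $T-\Id$ (and $T = \Id$ near $\partial K$ makes every correction compactly supported).

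\emph{Stage 1 (inner piecewise-smooth $\xi_t$).} For each $i=0,\dots,N$, the preimage $T^{-1}(\{x_1 = i/N\})$ is a nearly-vertical graph $\{x_1 = \gamma_i(x_2)\}$, and the region $R_i = T^{-1}(S_i)$ bounded by $\gamma_{i-1}, \gamma_i$ has the same area $1/N$ as the strip $S_i = [(i-1)/N, i/N] \times [0,1]$ by area-preservation of $T$. A Moser-type construction, carried out strip-by-strip with matched vertical boundaries, produces a measure-preserving $\xi_1 : K \to K$ with $\xi_1(S_i) = R_i$, smooth inside each $S_i$ and with possible derivative jumps across $\{x_1 = i/N\}$; the isotopy $\xi_t$ is the corresponding Moser flow with $\xi_0 = \Id$. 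After this stage, $T\circ\xi_1$ fixes each vertical grid line setwise and hence preserves each $S_i$.

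\emph{Stage 2 (smooth global $\zeta_t$).} Inside each $S_i$, $T\circ\xi_1$ is a smooth area-preserving self-map fixing the vertical sides, and the image $C_{ij}$ of each horizontal grid segment $\{x_2=j/N\}\cap S_i$ bounds area $j/N^2$ below it (by measure preservation). A smooth stream function, compactly supported in $\mathring K$ because $T\circ\xi_1 = \Id$ near $\partial K$, generates a Hamiltonian isotopy $\zeta_t$ with $\zeta_0=\Id$ whose time-one map straightens every $C_{ij}$ to the horizontal segment $\{x_2=j/N\}\cap S_i$. After this stage, $\tau := \zeta_1\circ T\circ \xi_1$ preserves the whole grid $K_N$, sending each $P_{ij}$ onto itself, and is smooth inside each $P_{ij}$ with identity action on $\partial P_{ij}$.

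\emph{Stage 3 (outer piecewise-smooth $\eta_t$ and affine diagonalization).} I refine to $K_{N'}$, $N' = 2^{p+q}$ with $q$ to be chosen large, and build $\eta_1$ so that $\eta_1\circ\tau$ is affine on each subrectangle $P'_{kl}$ of $K_{N'}$ with diagonal Jacobian $\mathrm{diag}(\lambda_{kl},\lambda_{kl}^{-1})$, the eigenvalues $\lambda_{kl}$ being dyadic rationals, and images $\tilde P'_{kl} \in K_{N'}$. The isotopy $\eta_t$ is the Moser interpolation between $\Id$ and $\eta_1$. This last stage is the main obstacle: the $O(N'^2)$ dyadic rounding choices for the $\lambda_{kl}$ and the target positions $\tilde P'_{kl}$ must be globally compatible with the area-balance constraints on every row and column of $K_{N'}$. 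The argument is a pigeonhole/rounding performed strip-by-strip, exploiting that for $q$ large every rounding error is $O(2^{-q})$ and can be absorbed into the refinement, while the $C^1$-smallness of $\tau-\Id$ (inherited from $T-\Id$) keeps all corrections in a narrow range. Piecing the three isotopies together via \eqref{Equa>pertur_form} yields the claimed $\sigma_t$.
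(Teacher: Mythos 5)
Your Stages 1--2 are a workable variant of the paper's Steps 1--2 (the paper normalizes differently: its $\xi_1$ only forces vertical segments to be mapped to vertical segments inside each horizontal slab $H_j$, and its $\eta_1$ straightens the curved images of the slab boundaries by averaging; it does not attempt to make the map preserve each grid cell, and indeed your claim that $\tau$ acts as the identity on $\partial P_{ij}$ is not justified -- preserving a grid line setwise does not mean fixing it pointwise). The genuine gap is Stage 3, which is simultaneously the crux of the lemma and the part you do not prove: \virgolette{a pigeonhole/rounding performed strip-by-strip} is not a construction. In the paper the affineness is not obtained by a posteriori rounding; it is built into the maps. After $\xi_1$ the image abscissa depends only on $x_1$ in each slab, so the first component of the composite is a function of $x_1$ alone; then $\eta_1$ replaces each curved image of a horizontal boundary by its average height and interpolates the vertical coordinate affinely in between, so the composite is automatically affine with constant diagonal differential on each $P_{ij}$. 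The dyadicity of the corners $x_{1,ij}$, $z_{2,ij}$ is secured \emph{in advance} by a separate small mass flow $\zeta$ (Step 0 of the paper's proof) that rounds each area $\mathcal L^2\big(T(H_j)\cap V_i\big)$ to a multiple of $2^{-p_0-p_1}$; it is not recovered at the end.

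Worse, the goal you set in Stage 3 is unattainable as stated. If a square $P'_{kl}$ of side $1/N'$ of the uniform grid $K_{N'}$ is mapped by $\diag(\lambda,\lambda^{-1})$ onto a rectangle with dyadic corners, then both $\lambda/N'$ and $1/(\lambda N')$ must be dyadic, which forces $\lambda$ to be a power of $2$; combined with $|\lambda-1|\ll 1$ this gives $\lambda=1$, i.e.\ your scheme collapses to the trivial case. This is exactly the inaccuracy in Shnirelman's Lemma 4.3 that Remark \ref{Shnirelman: sbaglio} is devoted to. The lemma avoids it because the $P_{ij}$ are \emph{not} the cells of a uniform grid: they are rectangles with dyadic corners having one pair of sides of length exactly $2^{-p_0}$ and the other of variable dyadic length $\Delta x$, mapped onto rectangles with the roles of the two sides exchanged ($\Delta z=\Delta x$), so that the diagonal eigenvalues $2^{-p_0}/\Delta x$ are rational but in general not dyadic. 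Fixing your argument essentially requires abandoning the uniform refined grid and reproducing the paper's structure.
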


The number $N$ is used in the next results in order to have that the perturbation is arbitrarily small in $L^1_{t,x}$.

\begin{proof}
%
The proof is given in $3$ steps:
\begin{enumerate}
\item first by an arbitrarily small perturbation of the final configuration we make sure the area of the regions which will be mapped into rectangles is dyadic;
\item secondly we perturb along horizontal slabs in order to have that vertical sections of the slabs are mapped into vertical segments;
\item finally we perturb vertical slabs so that the image of particular rectangles are rectangles and vertical segments remains vertical segments.
\end{enumerate}
The composition of all 3 maps with $T$ as in \eqref{Equa>pertur_form} will be the movement $\sigma_t$. We will use the notation
\begin{equation*}
[0,1]^2 \ni (x_1,x_2) \mapsto T(x_1,x_2) = (z_1,z_2) \in [0,1]^2
\end{equation*}
to avoid confusion between the final coordinates and the initial ones. When piecing together maps which are defined in closed sets with piecewise regular boundaries, we will neglect the negligible superposition of boundaries for simplicity: this slight inaccuracy should not generate confusion.

\smallskip

\noindent{\it Step 0: initial grid and perturbation.}
For $N_0 = 2^{p_0} \gg 1$ define the horizontal and vertical slabs
\begin{equation*}
H_j = [0,1] \times 2^{-p_0} [j-1,j], \quad V_i = 2^{-p_0} [i-1,i] \times [0,1], \quad i,j = 1,\dots ,2^{p_0}.
\end{equation*}
The image of the  horizontal lines
\begin{equation*}
x_1 \mapsto T(x_1,x_2)
\end{equation*}
can be written as graphs of functions 
\begin{equation*} 
z_1 \mapsto g(z_1,x_2),
\end{equation*}
and divides every vertical slab $V_i$ into $N_0 = 2^{p_0}$ parts
\begin{equation*}
\tilde \omega_{ij} = \Big\{ (i-1) 2^{-p_0} \leq z_1 \leq i 2^{-p_0}, g(z_1,(j-1)2^{-p_0}) \leq z_2 \leq g(z_1,j2^{-p_0}) \Big\}.
\end{equation*}

Let $\zeta_t : [0,1]^2 \to [0,1]^2$ be a measure preserving flow, moving mass across the boundary of $\tilde\omega_{ij}$: we can assume w.l.o.g that the mass flow $\phi_{ij,i'j'}$ across the boundary from $\tilde \omega_{ij}$ to $\tilde \omega_{i'j'}$ occurs in the relative interior of $\partial \tilde \omega_{ij} \cap \partial \tilde \omega_{i'j'}$. The measure preserving condition requires that
\begin{equation*}
\phi_{ij,(i-1)j} + \phi_{ij,(i+1)j} + \phi_{ij,i(j+1)} + \phi_{ij,i(j-1)} = 0.
\end{equation*}
Set $T' = \zeta_1 \circ T$ and consider the new curves
\begin{equation*}
z_1 \mapsto g'(z_1,x_2), \quad \Graph g' = T'([0,1] \times \{x_2\}).
\end{equation*}
Let $\tilde \omega'_{ij}$ be the new regions
\begin{equation*}
\tilde \omega'_{ij} = \Big\{ (i-1) 2^{-p_0} \leq z_1 \leq i 2^{-p_0}, g'(z_1,(j-1)2^{-p_0}) \leq z_2 \leq g'(z_1,j2^{-p_0}) \Big\},
\end{equation*}
whose new area is
\begin{equation*}
\mathcal L^2(\tilde \omega'_{ij}) = \phi_{ij,i(j-1)} + \phi_{ij,i(j+1)}.
\end{equation*}

Starting with $\tilde \omega'_{11}$, we move a mass $\phi_{11,12} < 2^{-p_0-p_1} \ll 1$ so that
\begin{equation*}
\mathcal L^2(\tilde \omega'_{11}) = 2^{-p_0-p_1} n_{11} \in 2^{-p_0 - p_1} \N.
\end{equation*}
Hence a mass $-\phi_{11,21}$ is flowing to the region $\tilde \omega_{21}$. Assuming that we have
\begin{equation*}
\mathcal L^2(\tilde \omega'_{i1}) = 2^{-p_0-p_1} n_{i1} \in 2^{-p_0-p_1} \N,
\end{equation*}
and that the mass flowing by $\phi_{i1,i2},\phi_{i1,(i+1)1}$ is $< 2^{-p_1}$, we consider two cases:
\begin{enumerate}
\item if $\phi_{i1,(i+1)1} \in 2^{-p_0-p_1}[0,1)$, then we flow a mass $\phi_{(i+1)1,(i+1)2} \in 2^{-p_0-p_1} [0,1)$ so that
\begin{equation*}
\mathcal L^2(\tilde \omega'_{(i+1)1}) = 2^{-p_0-p_1} n_{(i+1)1} \in 2^{-p_1} \N,
\end{equation*}
and the flow to the right is then
\begin{equation*}
\phi_{(i+1)1,(i+2)i} = \phi_{i1,(i+1)i} - \phi_{(i+1)1,(i+1)2} \in (-1,1) 2^{-p_0-p_1},
\end{equation*}
by the balance and because they have different sign;
\item if $\phi_{i1,(i+1)1} \in 2^{-p_0-p_1}(-1,0)$, then we flow a mass $\phi_{(i+1)1,(i+1)2} \in 2^{-p_0-p_1} (-1,0)$ and obtain the same estimate.
\end{enumerate}
The last term $\tilde \omega'_{N_01}$ is computed by conservation: indeed
\begin{equation*}
\sum_i \phi_{i1,i2} = 0,
\end{equation*}
and then
\begin{equation*}
\mathcal L^2(T'([0,1] \times [0,2^{-p_0}]) = 2^{-p_0} = \sum \mathcal L^2(\tilde \omega'_{i1}) = 2^{-p_0-p_1} \sum_{i=0}^{2^{p_0}-1} n_{i1} + \mathcal L^2(\tilde \omega'_{N_01}),
\end{equation*}
so that
\begin{equation*}
\mathcal L^2(\tilde \omega'_{N_01}) = 2^{-p_0 - p_1} \bigg( 2^{p_1} - \sum_{i=0}^{2^{p_0}-1} n_{i1} \bigg) \in 2^{-p_0-p_1} \N.
\end{equation*}
The estimate of $\phi_{N_01,N_02}$ is automatic from the flow $\phi_{(N_0-1)1,N_01}$.

The above procedure is then repeated for each region
\begin{equation*}
T([0,1] \times 2^{-p_0} [j-1,j]) = \bigcup_{i=1}^{N_0} \tilde \omega_{ij},
\end{equation*}
and the flow across each boundary is $\leq 2^{-p_0-p_1}$: the conservation of the measure of $T([0,1] \times [0,j] 2^{-p_0})$ yields that the last element $\tilde \omega_{N_0j}$ is again dyadic.

From now on we work with the map $T' = \zeta_1 \circ T$.

\smallskip

\noindent{\it Step 1: perturbation along horizontal slabs.} Consider the curves
\begin{equation*}
z_2 \mapsto (T')^{-1}(z_1,z_2),
\end{equation*}
which can be parameterized as
\begin{equation*}
x_2 \mapsto f'(z_1,x_2)
\end{equation*}
being $T'$ close to the identity in $C^1$. In each $H_j$ we can determine uniquely the value
\begin{equation}
\label{Equa:aver_x_1}
x_{1,j}(z_1) = \fint_{(j-1) 2^{-p_0}}^{j 2^{-p_0}} f'(z_1,x_2) dx_2,
\end{equation}
and since $T'$ is close to identity, again every map $z_1 \mapsto x_{1,j}(z_1)$ is invertible: denote its inverse by $z_{1,j}(x_1)$.

In particular, we consider the values
\begin{equation}
\label{Equa:x_1ij}
x_{1,ij} = x_{1,j}(i2^{-p_0}).
\end{equation}
By \eqref{Equa:aver_x_1} it follows that
\begin{equation}
\label{eq:deltax}
\big( x_{1,ij} - x_{1,(i-1)j} \big) 2^{-p_0} = \mathcal L^2 \big( (T')^{-1}(\tilde \omega'_{ij}) \big) \in 2^{-p_0 - p_1} \N,
\end{equation}
so that we deduce that the elements $x_{1,ij}$ are dyadic, i.e. $x_{1,ij} \in 2^{-p_1} \N$ (being $x_{1,0j}=0$).

Consider the family of ordered curves parametrized by $x_1 \in [0,1]$
\begin{equation*}
[0,1] \times [j-1,j] 2^{-p_0} \ni t,x_2 \mapsto f'_{j,t}(x_1,x_2) = (1-t) x_1 + t f'(z_{1,j}(x_1),x_2),
\end{equation*}
and let $\xi_{j,t} : [0,1] \times [j-1,j] 2^{-p_0} \to [0,1] \times [j-1,j] 2^{-p_0}$ be the unique measure preserving map mapping each segment $\{x_1\} \times [j-1,j] 2^{-p_0}$ into the image of $(f'_{j,t}(x_1,x_2),x_2)$, $x_2 \in [j-1,j] 2^{-p_0}$. This map is uniquely defined by the balance of mass, which reads as
\begin{equation}
\label{Equa:aver_pp}
\int_{(j-1)2^{-p_0}}^{(\xi_{j,t})_2(x_1,x_2)} \partial_{x_1} f'_{j,t}(x_1,w) dw = x_2 - (j-1) 2^{-p_0}.
\end{equation}
Being $f'_{j,t}$ close to the identity, $\xi_{j,t}$ is smooth and close to the identity.

Let $\xi_t : [0,1]^2 \to [0,1]^2$ be the measure preserving map obtained by piecing together the maps $\xi_{j,t}$. By construction the map $T'' = T' \circ \xi_1$ maps each vertical segment $\{x_1\} \times [j-1,j] 2^{-p_0}$ into the vertical segment
$$
\{z_{1,j}(x_1)\} \times \big[ g'(z_{1,j}(x_1),(j-1)2^{-p_0}), g'(z_{1,j}(x_1),j2^{-p_0}) \big].
$$

\smallskip

\noindent{\it Step 2: construction of the affine maps.} The next step is to rectify the pieces of curves
\begin{equation}
\label{Equa:v_seg}
[i-1,i] 2^{-p_0} \ni z_1 \mapsto g_j(z_1) = g'(z_1,j2^{-p_0}),
\end{equation}
which are the horizontal slab of the sets $\tilde \omega'_{ij}$. Fixing a vertical slide $v_i$, one considers the unique measure preserving map $\eta_{i,t} : [i-1,i] \times [0,1] \to [i,i-1] \times [0,1]$ such that the segments \eqref{Equa:v_seg} are mapped into vertical segments and such that maps the curve $g_j([i-1,i] 2^{-p_0})$ into the curve
\begin{equation}
\label{Equa:z_2ij}
\begin{split}
g'_{t,ij}(z_1) &= (1-t) g'(z_1,j2^{-p_0}) + t \fint_{(i-1) 2^{-p_0}}^{i2^{-p_0}} g'(w,j2^{-p_0}) dw \\
&= (1-t) g'(z_1,j2^{-p_0}) + t z_{2,ij}.
\end{split}
\end{equation}
In each $\tilde \omega'_{ij}$ this map is uniquely determined by the balance
\begin{equation*}
\int_{(i-1)2^{-p_0}}^{(\eta_{i,t})_2(z_1,z_2)} \big( g'_{t,ij}(w) - g'_{t,i(j-1)}(w) \big) dw = \mathrm{constant}, 
\end{equation*}
while the vertical coordinate is affine in each vertical segment.

Let $\eta_t : [0,1]^2 \to [0,1]^2$ be the measure preserving map obtained by piecing together the maps $\eta_{i,t}$. 

\smallskip

\noindent{\it Conclusion.} Up to a time scaling, the map we are looking for is
\begin{equation*}
\sigma_t = T \circ\xi_t \ind_{[0,1]}(t) + \zeta_{t-1} \circ T \circ \xi_1 \ind_{[1,2]}(t) + \eta_{t-2} \circ \zeta_1 \circ T \circ \xi_1 \ind_{[2,3]}(t).
\end{equation*}
It is clearly measure preserving and at $t=3$ it maps affinely the rectangles with dyadic coordinates
\begin{equation*}
P_{ij} = \big[ x_{1,(i-1)j},x_{1,ij} \big] \times [j-1,j] 2^{-p_0}
\end{equation*}
into the rectangles with dyadic coordinates
\begin{equation*}
\tilde P_{ij} = [i-1,i] 2^{-p_0} \times \big[ z_{2,i(j-1)},z_{2,ij} \big].
\end{equation*}
The values $x_{1,ij}$, $z_{2,ij}$ are given by \eqref{Equa:x_1ij}, \eqref{Equa:z_2ij} and belong to $2^{-p_1} \N$. Thus $N_1=N$ is the number of the statement.

\noindent The fact that $\sigma_t$ is piecewise smooth and it extends continuously to the boundary of each $P_{ij}$ are immediate from the construction, and its smallness follows by observing that as $p_0,p_1$ diverge the maps $\xi,\zeta,\eta$ converge to the identity.
\end{proof}
\begin{figure}
    \centering
    \includegraphics[scale=0.5]{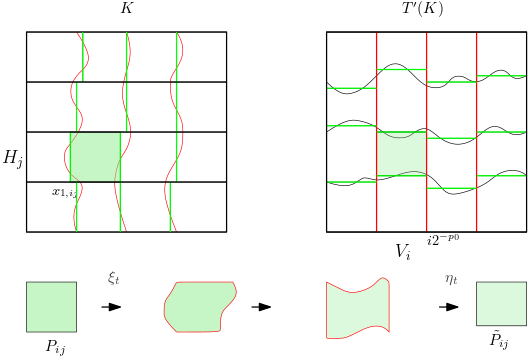}
    \caption{The action of $\sigma_t$ in Lemma \ref{lem: dyadic}: first the map $\xi_t$ moves the mass in $H_j$ in order to map the vertical green segments into the counterimages of the vertical red segments; then the map $T$ acts and the horizontal black boundaries of $H_j$ becomes the black curves, but vertical segments remain vertical; finally the action of $\eta_t$ rectifies the horizontal boundaries, while keeping vertical segments vertical.}
    \label{fig:2}
\end{figure}

Being the rectangles dyadic in the grid 
$\N\times\N \frac{1}{N}$,
then we have the following
\begin{corollary}
Every subsquare of the grid $\N\times\N\frac{1}{N}$ is sent by $\sigma_1$ affinely by a diagonal matrix onto a subrectangle with rational coordinates.
\end{corollary}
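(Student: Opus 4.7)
The plan is to show that this corollary is a direct bookkeeping consequence of Lemma \ref{lem: dyadic}, with no new analytical content required. First I would verify that every subsquare of side $1/N$ of the grid lies inside one of the rectangles $P_{ij}$ produced by the lemma. By the construction in the proof, the horizontal sides of $P_{ij}$ sit at levels $(j-1)2^{-p_0}, j 2^{-p_0}$, while the vertical sides are at $x_{1,(i-1)j}, x_{1,ij} \in 2^{-p_1}\N$ by \eqref{eq:deltax}. Choosing $N = 2^{\max(p_0, p_1)}$ (so that both partitions are coarser than the grid), every corner of every $P_{ij}$ lies on $\N\times\N\frac{1}{N}$, and therefore the grid refines the partition $\{P_{ij}\}$. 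Consequently each subsquare of the grid is contained in a unique $P_{ij}$.

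Next I would invoke the last statement of Lemma \ref{lem: dyadic}: on each $P_{ij}$ the map $\sigma_1$ is affine with constant diagonal differential $\diag(a_{ij}, b_{ij})$ where
\begin{equation*}
a_{ij} = \frac{2^{-p_0}}{x_{1,ij}-x_{1,(i-1)j}}, \qquad b_{ij} = \frac{z_{2,ij}-z_{2,i(j-1)}}{2^{-p_0}}.
\end{equation*}
By \eqref{eq:deltax} the denominator of $a_{ij}$ lies in $2^{-p_1}\N$, and the numerator of $b_{ij}$ lies in $2^{-p_1}\N$ because $z_{2,ij} \in 2^{-p_1}\N$ by the construction leading to \eqref{Equa:z_2ij}. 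Therefore $a_{ij}$ and $b_{ij}$ are rational, and the restriction of $\sigma_1$ to $P_{ij}$ has the form $x \mapsto p_{ij} + \diag(a_{ij}, b_{ij})(x - q_{ij})$, where $p_{ij}$ is the lower-left corner of $\tilde P_{ij}$ and $q_{ij}$ that of $P_{ij}$, both rational.

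Finally, applying this formula to a subsquare $Q = \prod_{k=1,2}[m_k/N, (m_k+1)/N] \subset P_{ij}$ shows that $\sigma_1(Q)$ is a rectangle with sides $a_{ij}/N$ and $b_{ij}/N$ and with lower-left corner $p_{ij} + \diag(a_{ij},b_{ij})((m_1/N, m_2/N)-q_{ij})$ at a rational point, hence a subrectangle with rational coordinates. Since every step here merely tracks the dyadic/rational nature of quantities already present in the proof of Lemma \ref{lem: dyadic}, no substantive obstacle is expected: the corollary is just a convenient reformulation packaged for later use in Section \ref{Ss:main_appro_th}.
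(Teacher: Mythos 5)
Your argument is correct and is exactly the reasoning the paper intends: since the corners of the $P_{ij}$ are dyadic in the grid $\N\times\N\frac{1}{N}$ (with $N=2^{p_1}$, and $p_1\geq p_0$ by construction), each subsquare of the grid lies in a single $P_{ij}$, on which $\sigma_1$ is affine with a constant diagonal rational matrix, so its image is a rational rectangle. The paper states the corollary as an immediate consequence of Lemma \ref{lem: dyadic} without writing out a proof, and your bookkeeping (including the explicit eigenvalues, which reappear in Remark \ref{Shnirelman: sbaglio}) matches it.
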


\begin{remark}
\label{Shnirelman: sbaglio}
The previous lemma also tells us that the map $\sigma_1$ is piecewise affine, in particular there exists $N=2^{p_1}\in \N$ refinement of the grid such that $\sigma_1$ maps each $\kappa$ subsquare of the grid $\N\times\N \frac{1}{N}$ affinely by a diagonal matrix onto a subrectangle $q$ with rational coordinates. It's false, in general, that $q$ has dyadic rational coordinates as stated in \cite[Lemma 4.3]{Shnirelman}. More precisely,
the previous lemma states that each 
\begin{equation*}
    P_{ij}=[x_{1,(i-1)j},x_{1,ij}]\times [j-1,j]2^{-p_0}
\end{equation*}
is sent into
\begin{equation*}
    \tilde{P}_{ij}=[i-1,i]2^{-p_0}\times [z_{2,i(j-1)},z_{2,ij}],
\end{equation*}
where $x_{1,(i-1)j},x_{1,ij},z_{2,i(j-1)},z_{2,ij}$ are dyadic. Call $\Delta x= x_{1,ij}-x_{1,(i-1)j}$ and $\Delta z=z_{2,ij}-z_{2,i(j-1)}$. Then by \eqref{eq:deltax} $\Delta x= 2^{-p_1}n_{ij}$ with $n_{ij}\in\N$. Up to translation the perturbed map $\sigma_{1}$ \eqref{Equa>pertur_form} can be written as
\begin{equation*}
    \sigma_{1}\llcorner_{P_{ij}}=\left(
    \begin{matrix}
    \frac{2^{-p_0}}{\Delta x} & 0 \\ 0 & 2^{p_0}(\Delta z)
    \end{matrix}\right).
\end{equation*}
Take a subsquare  $\kappa=[h-1,h]2^{-p_1}\times[k-1,k]2^{-p_1}\subset P_{ij}$, then $q=\sigma_{1}\llcorner_{P_{ij}}(\kappa)=\left[\frac{2^{-p_0}}{n_{ij
}},2^{p_0-p_1}(\Delta z)\right]$, which is dyadic only with further requirements on $n_{ij}$. For a more detailed analysis consider $H_j$ and call $\Delta x_i=x_{1,ij}-x_{1,(i-1)j}=2^{-p_1}n_{ij}$, where $i=1, \dots 2^{p_0}$. If we assume that every subsquare of the grid $\N\times\N\frac{1}{N}$ is sent into a dyadic rectangle then we find the conditions
\begin{equation*}
n_{ij}=2^{m_{ij}}, \quad\forall i,j.
\end{equation*}
 This condition tells us that, being measure-preserving,
 \begin{equation*}
    \sigma_{1}\llcorner_{P_{ij}}=\left(
    \begin{matrix}
   2^{-p_0+p_1-m_{ij}} & 0 \\ 0 & 2^{p_0-p_1+m_{ij}}
    \end{matrix}\right),
\end{equation*}
that is, all possible matrices are of the form 
\begin{equation*}
    \left(\begin{matrix}
   1 & 0 \\ 0 & 1
    \end{matrix}\right) \quad
    \left(\begin{matrix}
   \frac{1}{2} & 0 \\ 0 & 2
    \end{matrix}\right) \quad
    \left(\begin{matrix}
   2 & 0 \\ 0 & \frac{1}{2}
    \end{matrix}\right) \quad
    \left(\begin{matrix}
   \frac{1}{4} & 0 \\ 0 & 4
    \end{matrix}\right)\quad \dots \quad.
\end{equation*}
This condition is not compatible with the fact that $\sigma_{1}$ is an approximation of the original map $T$, which has been chosen to be close to the identity.
\end{remark}
\begin{remark}
\label{union:subrectangles}
From the previous lemma it easily follows that, if $N$ is the size of the grid, then every rectangle contained in the unit square $K$ is sent by the perturbed flow into a union of rectangles.  
\end{remark}
\begin{remark}
To use Theorem \ref{change of variables} we observe that in our case the change of variables $\phi$ is given by the flow $X_t$. In particular, since $X_t$ is close to the identity with all its derivatives, the costant $C_{X_t}$  given by the previous theorem, is $C_{X_t}\leq (1+\delta)^{d-1}$ ($d=2$ here).
\end{remark}

{

\subsubsection{The d-dimensional case}
\label{Sss:d-dim}

The analysis of the general case can be done as follows. 

The starting point is the following approximation assumption in $d-1$-dimension. 

\begin{assumption}
\label{Assu:recurs_d-1}
If 
the $\mathcal L^{d-1}$-measure-preserving diffeomorphism $T : [0,1]^{d-1} \rightarrow [0,1]^{d-1}$ is sufficiently close to the identity and equal to $\Id$ in a neighborhood of $\partial [0,1]^{d-1}$, then 
there exists $N\in\N$, $N=2^{p}$, and a measure-preserving piecewise smooth invertible map $\sigma$ close to $T$ such that $T \circ \sigma$ maps dyadic rectangles $P_{ij} \in \frac{\N^{d-1}}{N}$ onto dyadic rectangles $\tilde P_{ij} \in \frac{\N^{d-1}}{N}$ by a diagonal linear map (up to a translation). 
\end{assumption}

The above assumption is true for $d = 3$: indeed if $\sigma_t$ is the map of Lemma \ref{lem: dyadic}, then $\sigma = T^{-1} \circ \sigma_{t=1}$ does the job.

%
%
%

Now let $T : [0,1]^d \to [0,1]^d$ be a diffeomorphism sufficiently close to the identity and equal to the identity near $\partial [0,1]^d$ (Figure \ref{Fig:d-dim_case}). We will not address the perturbation $\zeta$ used to obtain dyadic parallelepipeds (Step 0 of the proof above), being the idea completely similar to the $2d$-case. We will also neglect the time dependence (i.e. how to split $t \in [0,1]$ into time intervals where the different maps are acting), because it is a fairly easy extension of the $2d$ case.

\smallskip

\noindent{ \it Step 1.}
Consider the curves
\begin{equation*}
z_d \mapsto T^{-1}(z_1,\dots,z_d).
\end{equation*}
The first step is to perturb $T$ to a map $T'$ in order to have that the above curves are segments along the $x_d$-direction in each slab $x_d \in [k_d,k_d+1]/N$ (Figure \ref{Fig:d-dim_case_rectif}). \\
Being $T$ close to the identity, the surface $T^{-1}(\{z_{d-1} = \text{const.}\})$ is parameterized by $x_1,\dots,x_{d-2},x_d$, and then in each strip
\begin{equation*}
(x_1,\dots,x_{d-2}) = \text{const.}, \quad x_{d-1} \in [0,1], \ x_d \in [k_d,k_d+1] \frac{1}{N}
\end{equation*}
one can use the same measure preserving map $\xi_1$ defined in Step 1 of the proof of Lemma \ref{lem: dyadic} above to obtain a perturbation $\hat T = T \circ \xi$ such that
\begin{equation*}
\hat T^{-1}(\{z_{d-1} = \text{const.}\}) \cap \big\{ x_d \in [k_d,k_d+1]/N \big\}
\end{equation*}
is independent of $x_d$, in the sense that it is the graph of a function depending only on $x_1,\dots,x_{d-2}$ times the segment $x_d \in [k_d,k_d+1]/N$. \\
Disintegrate the Lebesgue measure $\mathcal L^d$ as
\begin{equation*}
    \mathcal L^d \llcorner_{\{x_d \in [k_d,k_{d}+1]/N\}} = \int \Big[ a(x_1,\dots,x_{d-2},z_{d-1}) dx_1 dx_{d-2} dx_d \Big] dz_{d-1},
\end{equation*}
according to the partition $\hat T^{-1}(z_{d-1} = \text{const})$ (the density $a$ does not depend on $x_d$ because the surfaces contains the segments along $x_d$), and consider the $2$-dimensional surfaces
\begin{equation*}
    \hat T^{-1}(z_{d-1} = \text{const}) \cap \{x_1,\dots,x_{d-3} = \text{const}\}.
\end{equation*}
We use the same map $\xi_1$ of Step 1 of the proof above to rectify the curves
\begin{equation*}
    \begin{split}
    z_{d} \mapsto f(x_1,\dots,x_{d-3},z_{d-2},z_{d-1};z_d) &= (\hat T)^{-1}(z_{d-2},z_{d-1} = \text{const}) \\
    & \quad \cap \{x_1,\dots,x_{d-3} = \text{const}\} \cap \{x_d \in [k_d,k_d+1]/N\}.
    \end{split}
\end{equation*}
The main difference w.r.t. the maps \eqref{Equa:aver_x_1}, \eqref{Equa:aver_pp} is that instead of the Lebesgue measure we use the density $a(x_1,\dots,x_{d-2},z_{d-1})$. Eventually, the composition of the two maps above gives a new map $\check T$ such that $(\check T)^{-1}(z_{d-2},z_{d-1} = \text{const})$ is a $(d-2)$-dimensional surface made of the graph of a function depending on $x_1,\dots,x_{d-3}$ times the segment $x_d \in k_d,k_d+1]/N$. \\
The argument is then repeated in the $d-2$-regions $(\check T)^{-1}(z_{d-2},z_{d-1} = \text{const})$ (i.e. disintegrate the Lebesgue measure and shift along the $x_{d-3}$ direction to rectify $(\check T)^{-1}(z_{d-3},\dots,z_{d-1} = \text{const}$), and so on until we obtain that
%
%
%
%
%
a new map $T'$ such that
\begin{equation*}
(T')^{-1}(\{z_1,\dots,z_{d-1} = \text{const.}\}) \cap \big\{ x_d \in [k_d,k_d+1]/N \big\}
\end{equation*}
is independent on $x_d$. This means that
%
lines along the $z_d$ are mapped back into $N$ segments of length $1/N$ along $x_d$.

\begin{figure}[p]
\centering
\def\svgwidth{\textwidth}
\begingroup%
  \makeatletter%
  \providecommand\color[2][]{%
    \errmessage{(Inkscape) Color is used for the text in Inkscape, but the package 'color.sty' is not loaded}%
    \renewcommand\color[2][]{}%
  }%
  \providecommand\transparent[1]{%
    \errmessage{(Inkscape) Transparency is used (non-zero) for the text in Inkscape, but the package 'transparent.sty' is not loaded}%
    \renewcommand\transparent[1]{}%
  }%
  \providecommand\rotatebox[2]{#2}%
  \newcommand*\fsize{\dimexpr\f@size pt\relax}%
  \newcommand*\lineheight[1]{\fontsize{\fsize}{#1\fsize}\selectfont}%
  \ifx\svgwidth\undefined%
    \setlength{\unitlength}{1939.23103393bp}%
    \ifx\svgscale\undefined%
      \relax%
    \else%
      \setlength{\unitlength}{\unitlength * \real{\svgscale}}%
    \fi%
  \else%
    \setlength{\unitlength}{\svgwidth}%
  \fi%
  \global\let\svgwidth\undefined%
  \global\let\svgscale\undefined%
  \makeatother%
  \begin{picture}(1,0.51264219)%
    \lineheight{1}%
    \setlength\tabcolsep{0pt}%
    \put(0,0){\includegraphics[width=\unitlength,page=1]{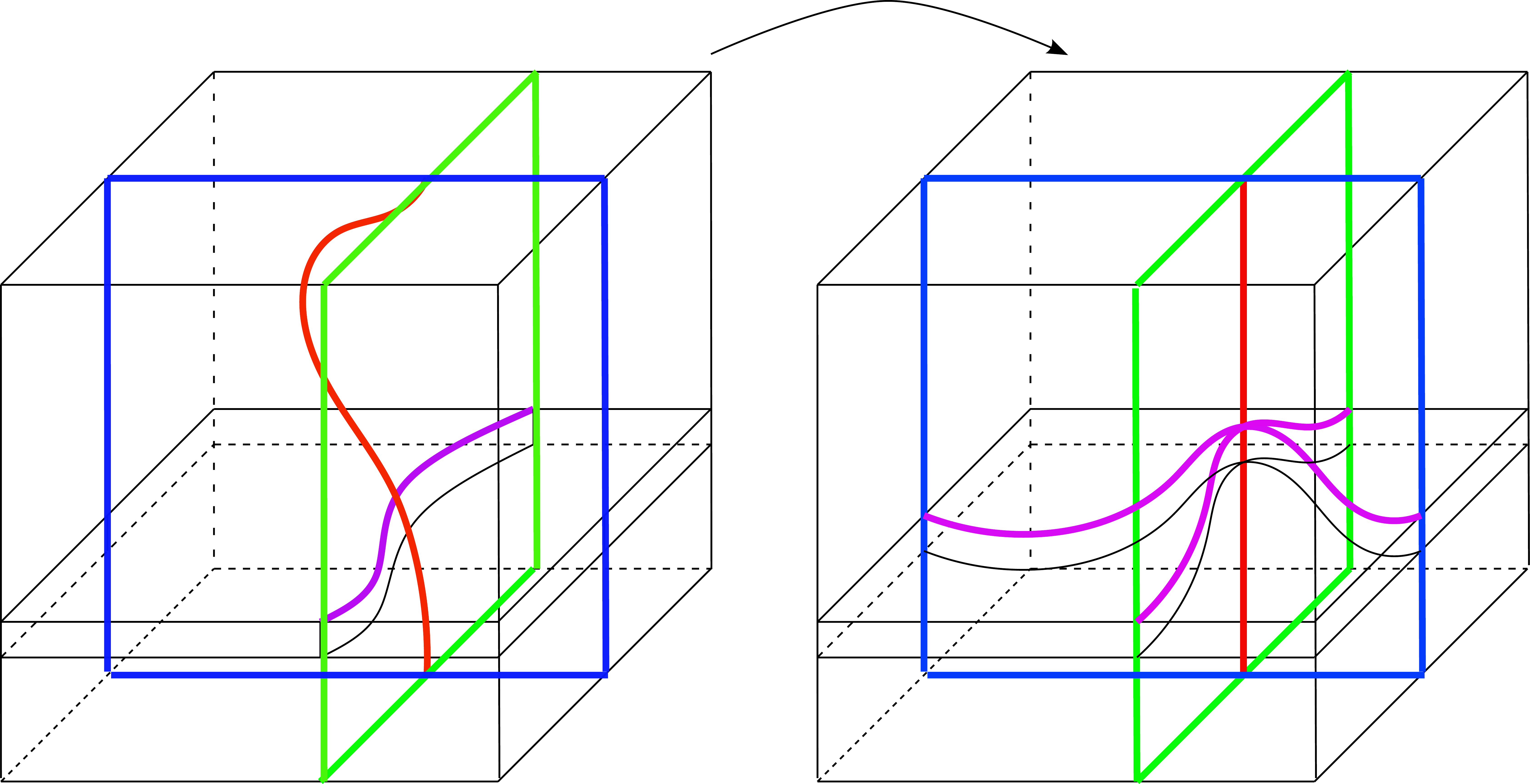}}%
    \put(0.57363349,0.52283433){\color[rgb]{0,0,0}\makebox(0,0)[lt]{\lineheight{1.25}\smash{\begin{tabular}[t]{l}$T$\end{tabular}}}}%
    \put(0.68552614,0.40959561){\color[rgb]{0,0,0}\makebox(0,0)[lt]{\lineheight{1.25}\smash{\begin{tabular}[t]{l}$z_2 = \mathrm{const}$\end{tabular}}}}%
    \put(0.19521963,0.42137145){\color[rgb]{0,0,0}\makebox(0,0)[lt]{\lineheight{1.25}\smash{\begin{tabular}[t]{l}$T^{-1}(z_1,z_2)$\end{tabular}}}}%
    \put(0.43354818,0.17464073){\color[rgb]{0,0,0}\makebox(0,0)[lt]{\lineheight{1.25}\smash{\begin{tabular}[t]{l}$x_3 \in [k_3,k_3+1]/N$\end{tabular}}}}%
    \put(0.79609185,0.44985844){\makebox(0,0)[lt]{\lineheight{1.25}\smash{\begin{tabular}[t]{l}$z_1 = \mathrm{const}$\end{tabular}}}}%
    \put(0,0){\includegraphics[width=\unitlength,page=2]{cubetti_3d.pdf}}%
  \end{picture}%
\endgroup%

\caption{Starting point: the map $T : [0,1]^3 \to [0,1]^3$ maps the slap $x_3 \in [k_3,k_3+1]/N$ into a $3d$-set with purple intersections, and $T^{-1}(z_1,z_2)$ is the red curve at the left.}
\label{Fig:d-dim_case}
\end{figure}

\begin{figure}[p]
\centering
\def\svgwidth{\textwidth}
\begingroup%
  \makeatletter%
  \providecommand\color[2][]{%
    \errmessage{(Inkscape) Color is used for the text in Inkscape, but the package 'color.sty' is not loaded}%
    \renewcommand\color[2][]{}%
  }%
  \providecommand\transparent[1]{%
    \errmessage{(Inkscape) Transparency is used (non-zero) for the text in Inkscape, but the package 'transparent.sty' is not loaded}%
    \renewcommand\transparent[1]{}%
  }%
  \providecommand\rotatebox[2]{#2}%
  \newcommand*\fsize{\dimexpr\f@size pt\relax}%
  \newcommand*\lineheight[1]{\fontsize{\fsize}{#1\fsize}\selectfont}%
  \ifx\svgwidth\undefined%
    \setlength{\unitlength}{902.53827433bp}%
    \ifx\svgscale\undefined%
      \relax%
    \else%
      \setlength{\unitlength}{\unitlength * \real{\svgscale}}%
    \fi%
  \else%
    \setlength{\unitlength}{\svgwidth}%
  \fi%
  \global\let\svgwidth\undefined%
  \global\let\svgscale\undefined%
  \makeatother%
  \begin{picture}(1,0.35194778)%
    \lineheight{1}%
    \setlength\tabcolsep{0pt}%
    \put(0,0){\includegraphics[width=\unitlength,page=1]{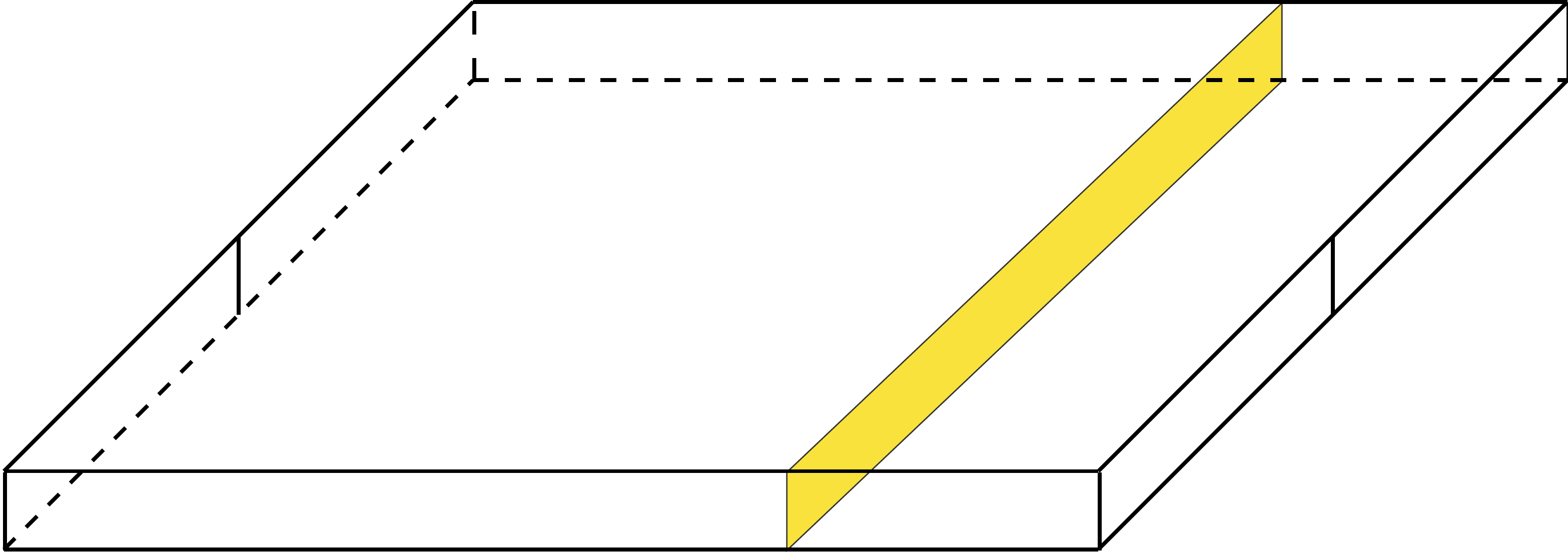}}%
    \put(0.76445494,0.04068903){\color[rgb]{0,0,0}\makebox(0,0)[lt]{\lineheight{1.25}\smash{\begin{tabular}[t]{l}$x_3 \in [k_3,k_3+1]/N$\end{tabular}}}}%
    \put(0,0){\includegraphics[width=\unitlength,page=2]{cubetti_3d_rectif_d.pdf}}%
    \put(0.39136624,0.06843507){\color[rgb]{0,0,0}\makebox(0,0)[lt]{\lineheight{1.25}\smash{\begin{tabular}[t]{l}$x_1 = \text{const}$\end{tabular}}}}%
    \put(0.0447773,0.22533322){\color[rgb]{0,0,0}\makebox(0,0)[lt]{\lineheight{1.25}\smash{\begin{tabular}[t]{l}$T^{-1}(z_2)$\end{tabular}}}}%
    \put(0.35224351,0.20040353){\color[rgb]{0,0,0}\makebox(0,0)[lt]{\lineheight{1.25}\smash{\begin{tabular}[t]{l}$T^{-1}(z_1,z_2)$\end{tabular}}}}%
    \put(0,0){\includegraphics[width=\unitlength,page=3]{cubetti_3d_rectif_d.pdf}}%
  \end{picture}%
\endgroup%

\caption{First move the mass in the yellow $2d$-rectangle $x_1 = \text{const}$ so that its intersection with $T^{-1}(z_2)$ is vertical in $x_3 \in [k_3,k_3+1]/N$, next move the mass along $T^{-1}(z_2)$ so that $T^{-1}(z_1,z_2)$ is vertical in $x_3 \in [k_3,k_3+1]/N$.}
\label{Fig:d-dim_case_rectif}
\end{figure}

\begin{figure}[p]
\centering
\def\svgwidth{\textwidth}
\begingroup%
  \makeatletter%
  \providecommand\color[2][]{%
    \errmessage{(Inkscape) Color is used for the text in Inkscape, but the package 'color.sty' is not loaded}%
    \renewcommand\color[2][]{}%
  }%
  \providecommand\transparent[1]{%
    \errmessage{(Inkscape) Transparency is used (non-zero) for the text in Inkscape, but the package 'transparent.sty' is not loaded}%
    \renewcommand\transparent[1]{}%
  }%
  \providecommand\rotatebox[2]{#2}%
  \newcommand*\fsize{\dimexpr\f@size pt\relax}%
  \newcommand*\lineheight[1]{\fontsize{\fsize}{#1\fsize}\selectfont}%
  \ifx\svgwidth\undefined%
    \setlength{\unitlength}{495.87267778bp}%
    \ifx\svgscale\undefined%
      \relax%
    \else%
      \setlength{\unitlength}{\unitlength * \real{\svgscale}}%
    \fi%
  \else%
    \setlength{\unitlength}{\svgwidth}%
  \fi%
  \global\let\svgwidth\undefined%
  \global\let\svgscale\undefined%
  \makeatother%
  \begin{picture}(1,0.49699797)%
    \lineheight{1}%
    \setlength\tabcolsep{0pt}%
    \put(0,0){\includegraphics[width=\unitlength,page=1]{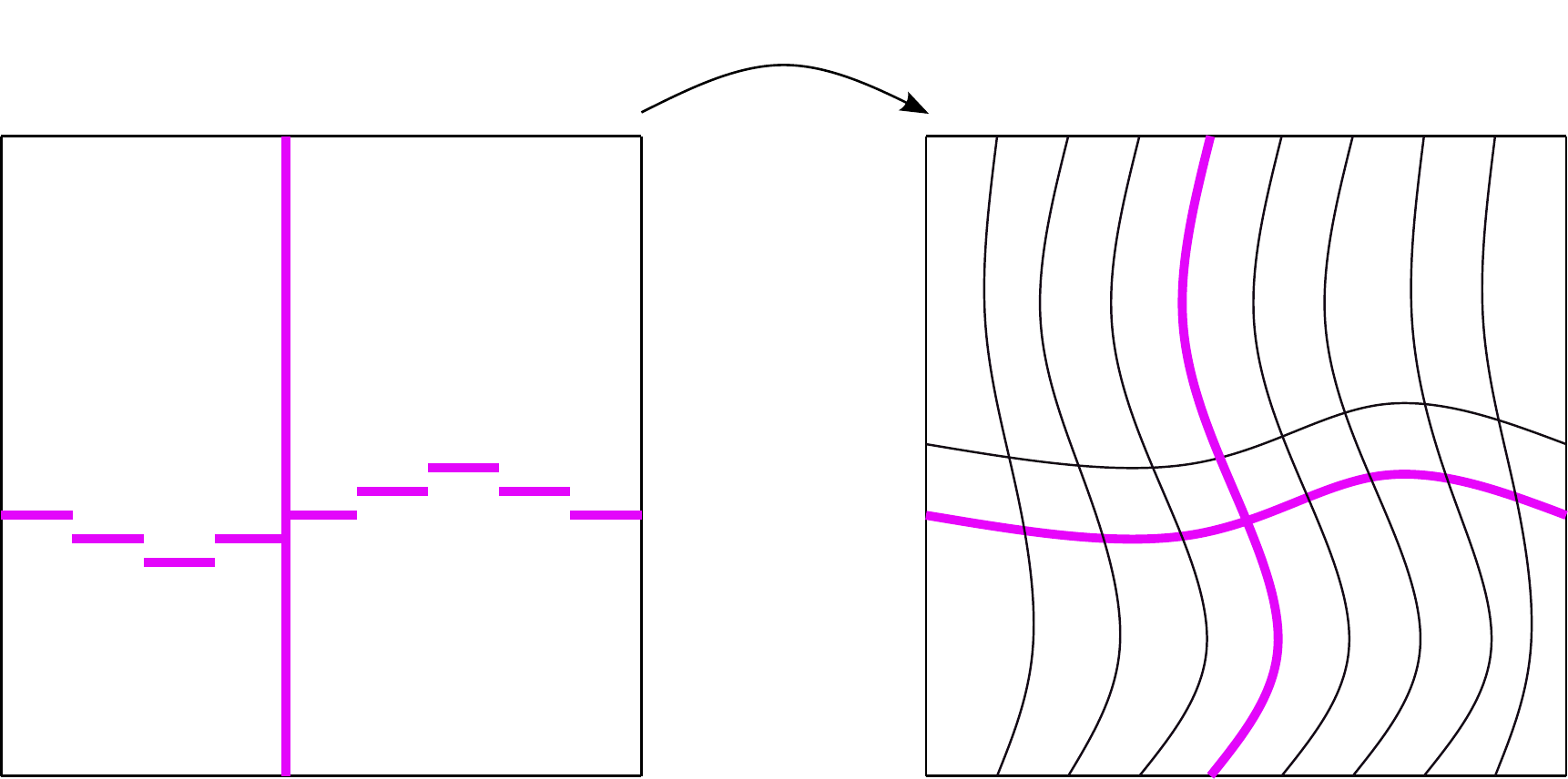}}%
    \put(0.39634862,0.48014398){\color[rgb]{0,0,0}\makebox(0,0)[lt]{\lineheight{1.25}\smash{\begin{tabular}[t]{l}$(T')^{-1} \circ \sigma$\end{tabular}}}}%
    \put(0,0){\includegraphics[width=\unitlength,page=2]{cubetti_3d_recurrence.pdf}}%
    \put(0.7418739,0.42528134){\color[rgb]{0,0,0}\makebox(0,0)[lt]{\lineheight{1.25}\smash{\begin{tabular}[t]{l}$(T')^{-1}(z_1)$\end{tabular}}}}%
    \put(0.47610474,0.15785302){\color[rgb]{0,0,0}\makebox(0,0)[lt]{\lineheight{1.25}\smash{\begin{tabular}[t]{l}$(T')^{-1}(z_2)$\end{tabular}}}}%
  \end{picture}%
\endgroup%

\caption{The recurrence assumption yields a map $\sigma$ which maps affinely subsquares into rectangles: in the picture it is shows how it acts before the composition with $T'$ (see also Figure \ref{fig:2}).}
\label{Fig:d-dim_case_recurrence}
\end{figure}

\begin{figure}[p]
\centering
\def\svgwidth{\textwidth}
\begingroup%
  \makeatletter%
  \providecommand\color[2][]{%
    \errmessage{(Inkscape) Color is used for the text in Inkscape, but the package 'color.sty' is not loaded}%
    \renewcommand\color[2][]{}%
  }%
  \providecommand\transparent[1]{%
    \errmessage{(Inkscape) Transparency is used (non-zero) for the text in Inkscape, but the package 'transparent.sty' is not loaded}%
    \renewcommand\transparent[1]{}%
  }%
  \providecommand\rotatebox[2]{#2}%
  \newcommand*\fsize{\dimexpr\f@size pt\relax}%
  \newcommand*\lineheight[1]{\fontsize{\fsize}{#1\fsize}\selectfont}%
  \ifx\svgwidth\undefined%
    \setlength{\unitlength}{248.24973039bp}%
    \ifx\svgscale\undefined%
      \relax%
    \else%
      \setlength{\unitlength}{\unitlength * \real{\svgscale}}%
    \fi%
  \else%
    \setlength{\unitlength}{\svgwidth}%
  \fi%
  \global\let\svgwidth\undefined%
  \global\let\svgscale\undefined%
  \makeatother%
  \begin{picture}(1,0.57445928)%
    \lineheight{1}%
    \setlength\tabcolsep{0pt}%
    \put(0,0){\includegraphics[width=\unitlength,page=1]{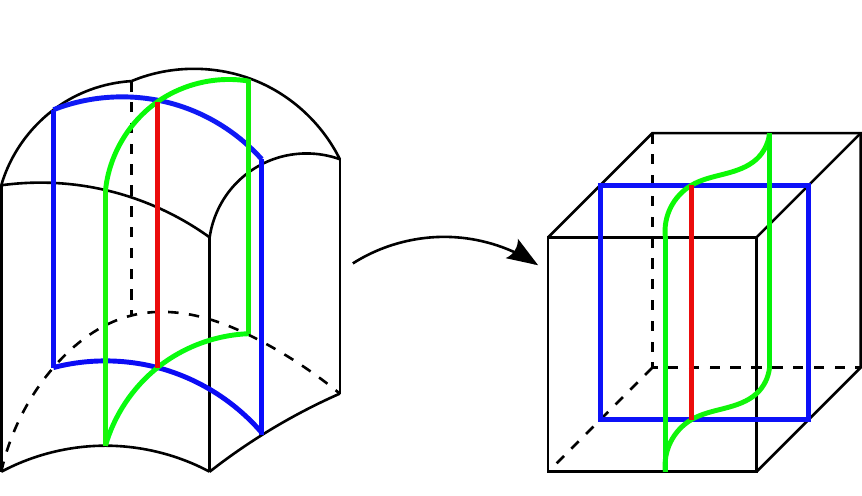}}%
    \put(0.28988295,0.03362224){\color[rgb]{0,0,0}\makebox(0,0)[lt]{\lineheight{1.25}\smash{\begin{tabular}[t]{l}$z_2 = \text{const}$\end{tabular}}}}%
    \put(0.26921288,0.49717584){\color[rgb]{0,0,0}\makebox(0,0)[lt]{\lineheight{1.25}\smash{\begin{tabular}[t]{l}$z_1 = \text{const}$\end{tabular}}}}%
    \put(0.49716234,0.31579471){\color[rgb]{0,0,0}\makebox(0,0)[lt]{\lineheight{1.25}\smash{\begin{tabular}[t]{l}$G$\end{tabular}}}}%
  \end{picture}%
\endgroup%

\caption{The last step is to map the subcubes deformed in the direction $x_3$ into parallelepipeds such that the Lebesgue measure is preserved and the map $G$ is of triangular form: these conditions imply that $\tilde T = G \circ \bar T$ is affine.}
\label{Fig:d-dim_case_laststep}
\end{figure}

\smallskip
{\it Step 2.}
Differently from the $2d$ case, it is not enough to perturb the vertical slab as in Step 2, since the sets $(T')^{-1}(\{z_k = \text{const.}\})$ are not of (piecewise) the form $\{x_k = \text{const.}\}$. Observe however that in each slab $\{x_d \in [k_d,k_d+1]/N\}$ the map
$$
(x_1,\dots,x_{d-1}) = (T'_{k_d})^{-1}(z_1,\dots,z_{d-1})
$$
is well defined, where $(T')^{-1}_{k_d}$ denotes the first $(d-1)$-components of $(T')^{-1}$ restricted to $\{x_d \in [k_d,k_d+1]/N\}$: we have used the property that segments along $z_d$ are mapped back into segments along $x_d$.

We use Assumption \ref{Assu:recurs_d-1} to get a map $\sigma_{k_d} : [0,1]^{d-1} \to [0,1]^{d-1}$ such that $T''_{k_d} = T'_{k_d} \circ \sigma_{k_d}$ 
maps affinely parallepipeds of a grid $\N^{d-1}/(NN_1)$, $N_1 = 2^{p_1}$, into cubes of the same grid: we can take $N_1 \gg 1$ in order to be independent of $k$ (Figure \ref{Fig:d-dim_case_recurrence}). \\
Hence the map $T''_{k_d} $ maps parallelepipeds of the form
\begin{equation*}
\prod_{i=1}^{d-1} \frac{[k_i,k_i+1]}{NN_1} \times \frac{[k_d,k_d+1]}{N}
\end{equation*}
into regions for the form
\begin{equation*}
\bigg\{ z_d \in \big[ g(x_1,\dots,x_{d-1},k_d/N),g(x_1,\dots,x_{d-1},(k_d+1)/N) \big], z_i \in \prod_{i=1}^{d-1} \frac{[k'_i,k'_i+1]}{NN_1} \bigg\},
\end{equation*}
and up to a translation it is a linear diagonal map in the first $d-1$ coordinates and segments along $x_d$ remains along $x_d$.

\smallskip
{\it Step 3.} Piecing together the maps $T''_{k_d}$, we obtain a measure preserving map $\bar T$ close to $T$ with the properties listed at the end of the previous step. We will use the fact that it is affine in the first $(d-1)$ coordinates to use a map similar $\eta_1$ of Step 2 of the proof of Lemma \ref{lem: dyadic} to rectify the set
\begin{equation*}
\bar T(\{x_d \in [k_d,k_d+1]/N\}) \cap \big\{ z_j \in [k'_j,k'_j+1]/(NN_1), j = 1,\dots,d-1 \big\}.
\end{equation*}
It is defined as the unique measure preserving map $G(z_1,\dots,z_{d-1})$ of the form
\begin{equation*}
    G(z_1,\dots,z_{d-1}) = \Big( G_1(z_1,\dots,z_{d-1}), G_2(z_2,\dots,z_{d-1}),\dots,G_{d-1}(z_{d-1}),G(z_1,\dots,z_d) \Big).
\end{equation*}
Note that since $z_d$ enters in the last component, segments along $z_d$ are mapped into segments along $z_d$, and the triangular form of the map assures its uniqueness (Figure \ref{Fig:d-dim_case_laststep}).
%
%
%
%
%
%
%
%

The last part of the analysis is to deduce that if a measure preserving transformation $\tilde T = G \circ \bar T  : [0,1]^d \to [0,1]^d$ is such that $\tilde T$ is of triangular form then it is the identity: we have rescaled every rectangle to a cube by linear scaling. \\ 
If the map has this triangular form, we conclude that the measure preserving condition reads as
\begin{equation*}
\prod_{i=1}^d \partial_i \tilde T_i(x_i,\dots,x_d) = 1,
\end{equation*}
which together with
\begin{equation*}
\int \partial_i \tilde T_i(x_i,\dots,x_d) dx_i = 1
\end{equation*}
gives $\partial_i T_i(x_i,\dots,x_d) = 1$, i.e. that the map is the identity.

}

\subsection{BV estimates of perturbations}
\label{Ss>BV_pertu}

Let $X_t:K\rightarrow K$ be a smooth flow of measure-preserving diffeomorphisms and assume 
	\begin{equation}
	\label{Equa:kappa_def}
	\|X_t - \Id\|_{C^3},\|X^{-1}_t - \Id\|_{C^3} \leq \wp, 
	\end{equation}
	with $\wp \ll 1$. 
Call $T(x)=X_{t=1}(x)$, and let $N=2^{p_0}\in\N$ be the dimension of the grid given by Lemma \ref{lem: dyadic}. 
In this section we compute the BV norms of the perturbations of the form \eqref{Equa>pertur_form} constructed in Lemma \ref{lem: dyadic}. 

We first address the action of the map $\zeta_t$ on $X(t)$. Define the perturbed flow
\begin{equation*}
t \mapsto X'_t(x) = \zeta_t \circ X_t(x).
\end{equation*}

\begin{lemma}
\label{Lem:zeta_map_X}
There exists a perturbation $\zeta_t$ as required by Step 0 of Lemma \ref{lem: dyadic} such that if $v$ is its associated vector field then
\begin{equation*}
    \|\zeta_t - \Id\|_{C_0} + 2^{-p_0} \|\nabla \zeta_t - \Id\|_{C_0} + 2^{-2p_0} \|\nabla^2 \zeta_t\|_{C_0} \leq \mathcal O(1) 2^{-p_1},
\end{equation*}
\begin{equation*}
    \|v\|_{C_0} + 2^{-p_0} \|\nabla v\|_{C_0} + 2^{-2p_0} \|\nabla^2 v\|_{C^0} \leq \mathcal O(1) 2^{-p_1}.
\end{equation*}
for $p_1$ sufficiently large.
\end{lemma}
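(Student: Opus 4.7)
My strategy is to construct $\zeta_t$ explicitly as the time-$t$ flow of a smooth divergence-free vector field $v$ that is a sum of localized bumps, one for each interface between adjacent regions $\tilde\omega_{ij}$ across which mass must be transferred in Step 0 of Lemma \ref{lem: dyadic}. Recall that the quantities $\phi_{ij,i'j'}$ satisfy by construction $|\phi_{ij,i'j'}| \leq 2^{-p_0-p_1}$, while each region $\tilde\omega_{ij}$ has diameter of order $2^{-p_0}$. The whole estimate is therefore a scaling estimate: a perturbation of flux $2^{-p_0-p_1}$ through a window of width $2^{-p_0}$ carries total displacement of order $2^{-p_1}$, and each additional spatial derivative costs a factor $2^{p_0}$.

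\textbf{Construction.} Fix a smooth divergence-free reference field $w\in C^\infty_c(\R^2;\R^2)$ with unit flux through some fixed segment $\Sigma_0$; rescale it to each interface $\Gamma_{ij,i'j'}\subset\partial\tilde\omega_{ij}\cap\partial\tilde\omega_{i'j'}$ by an affine map $\Psi_{ij,i'j'}$ of scale $2^{-p_0}$ sending a neighbourhood of $\Sigma_0$ to a neighbourhood of $\Gamma_{ij,i'j'}$ contained strictly in $\mathring{\tilde\omega}_{ij}\cup\mathring{\tilde\omega}_{i'j'}$. Setting
\begin{equation*}
v(x) = \sum_{(ij),(i'j')} \phi_{ij,i'j'}\, 2^{p_0}\, (D\Psi_{ij,i'j'})\, w\bigl(\Psi_{ij,i'j'}^{-1}(x)\bigr),
\end{equation*}
the factor $2^{p_0}$ compensates the rescaling of the Jacobian so that the flux across $\Gamma_{ij,i'j'}$ is exactly $\phi_{ij,i'j'}$. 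Since $\wp\ll 1$ ensures the geometry of the $\tilde\omega_{ij}$ is close to the dyadic grid, the supports of the individual bumps are pairwise disjoint. With $\zeta_t$ defined as the flow of $v$, the time-one mass flow across each $\Gamma_{ij,i'j'}$ equals $\phi_{ij,i'j'}$, which is the requirement of Step 0.

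\textbf{Estimates on $v$.} By the chain rule and the bound $|\phi_{ij,i'j'}|\leq 2^{-p_0-p_1}$, each bump contributes
\begin{equation*}
\|v\|_{C^0} \leq \mathcal O(1)\, 2^{p_0}\cdot 2^{-p_0-p_1} = \mathcal O(1)\, 2^{-p_1},
\end{equation*}
and analogously $\|\nabla^k v\|_{C^0}\leq \mathcal O(1)\, 2^{kp_0}\cdot 2^{-p_1}$ for $k=1,2$, which is exactly the weighted bound in the statement. Disjointness of the supports means that at any point only one bump is nonzero, so no summation loss occurs.

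\textbf{From $v$ to $\zeta_t$.} Writing $\zeta_t(x) = x + \int_0^t v(\zeta_s(x))\,ds$ and differentiating in $x$ gives the Duhamel-type bounds
\begin{equation*}
\|\zeta_t - \Id\|_{C^0}\leq \|v\|_{C^0},\qquad \|\nabla\zeta_t - \Id\|_{C^0}\leq \mathcal{O}(1)\|\nabla v\|_{C^0},\qquad \|\nabla^2\zeta_t\|_{C^0}\leq \mathcal{O}(1)\bigl(\|\nabla^2 v\|_{C^0} + \|\nabla v\|_{C^0}^2\bigr),
\end{equation*}
valid for $t\in[0,1]$ as long as $\|\nabla v\|_{C^0}\leq 1$, which holds once $p_1$ is taken large enough (say $2^{p_0-p_1}\leq 1$). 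The quadratic term $\|\nabla v\|_{C^0}^2\leq \mathcal O(1)\,2^{2(p_0-p_1)}$ is dominated by $2^{2p_0-p_1}$ since $2^{-p_1}\leq 1$, so all three norms satisfy the claimed weighted bound. The only subtle point, which I expect to be the main obstacle, is to check that the explicit bumps chosen above actually produce the dyadic areas of Step 0: this is however immediate, since the algorithm of Step 0 only prescribes the signed fluxes $\phi_{ij,i'j'}$ between adjacent regions, and any divergence-free $v$ realising those fluxes automatically produces the desired areas by the divergence theorem.
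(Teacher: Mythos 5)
Your construction is essentially the paper's: the paper builds, for each region $\tilde\omega_{ij}$, a boundary datum $v_n$ supported away from the corners realizing the fluxes $\phi_{ij,i'j'}$, integrates it to a stream function $p$ of size $2^{-p_0-p_1}$ varying over a scale $2^{-p_0}$, extends $p$ inside and sets $v=\nabla^\perp p$; your sum of rescaled divergence-free bumps, one per interface, is the same object and the same scaling engine (flux $2^{-p_0-p_1}$ over length scale $2^{-p_0}$ gives $\|v\|_{C^0}\sim 2^{-p_1}$ with each derivative costing $2^{p_0}$), and the passage from $v$ to $\zeta_t$ is identical. One bookkeeping slip: the pushforward $(D\Psi)\,w\circ\Psi^{-1}$ of a unit-flux field under an affine map of scale $2^{-p_0}$ has flux $2^{-2p_0}$ (the field picks up a factor $2^{-p_0}$ and the segment another), so the normalizing coefficient must be $\phi_{ij,i'j'}\,2^{2p_0}$, not $\phi_{ij,i'j'}\,2^{p_0}$; with that correction $\|v\|_{C^0}\leq \mathcal O(1)\,\phi\,2^{p_0}=\mathcal O(1)\,2^{-p_1}$ and all the displayed bounds come out exactly as you state, whereas as written your field realizes flux $\phi\,2^{-p_0}$ and would not meet the requirement of Step 0.
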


\begin{proof}
The request of Step 0 of Lemma \ref{lem: dyadic} are that the flow across the each region $\tilde \omega_{ij}$ is $0$ (plus the dyadic condition on the new region $\tilde \omega_{ij}$). Hence the problem reduces in finding a suitable incompressible flow with a given boundary flux: we will construct a flow generated by a vector field constant in time.

Consider a function $v_n$ on $\partial \omega_{ij}$ such that
\begin{itemize}
    \item its support is at distance $2^{-p_0-2}$ from the corners of $\tilde \omega_{ij}$,
    \item the integral on each of the regular sides is the required flux $\phi_{ij,i'j'}$,
    \item $\|v_n\|_\infty, 2^{-p_0}\|v_n'\|, 2^{-2p_0}\|v_n''\| \leq \mathcal O(1) 2^{-p_1}$, where $v'_n$ is the derivative of $v_n$.
\end{itemize}
Its existence follows from the fact that $\omega_{ij}$ is close to a square of side $2^{-p_0}$, being $X$ close to the identity. The last point is a consequence of the fact that $|\phi_{ij,i'j'}| \leq 2^{-p_0-p_1}$.

The integral of $v$ on $\partial \tilde \omega_{ij}$ is a potential function $p$, which is constant in the $2^{-p_0-2}$-neighborhood of every corner and such that
\begin{equation*}
    \|p\|_{C_0}, 2^{-p_0} \|p'\|_{C_0}, 2^{-2p_0} \|p''\|_{C_0}, 2^{-3p_0} \|p'''\|_{C_0} \leq \mathcal O(1) 2^{-p_0-p_1},
\end{equation*}
where $p',p''$ are its first and second derivative.

Extend $p$ to a $C^2$-function inside $\tilde \omega_{ij}$: since this extension can be required to vary in a region of size $2^{-p_0-2}$, we get
\begin{equation*}
    \|p\|_{C_0} \leq \mathcal O(1) 2^{-p_0-p_1}, \quad \|\nabla p\|_{C_0} \leq \mathcal O(1) \frac{2^{-p_0-p_1}}{2^{-p_0-2}} = \mathcal O(1) 2^{-p_1},
\end{equation*}
\begin{equation*}
     \|\nabla^2 p\|_{C_0} \leq \mathcal O(1) \frac{2^{-p_0-p_1}}{2^{-2p_0-4}} = \mathcal O(1) 2^{p_0-p_1}.
\end{equation*}
\begin{equation*}
     \|\nabla^3 p\|_{C_0} \leq \mathcal O(1) \frac{2^{-p_0-p_1}}{2^{-3p_0-6}} = \mathcal O(1) 2^{2p_0-p_1}.
\end{equation*}
In particular the vector field $v = \nabla^\perp p$ satisfies the statement, and if $\zeta_t$ is the flow generated by $v$ then the same holds by the estimates
\begin{equation*}
    \|\zeta_t - \Id\|_{C^0} \leq \|v\|_{C_0} t, \quad \|\nabla \zeta_t - \Id\|_{C^0} \leq e^{\|\nabla v\|_{C_0} t} - 1,
\end{equation*}
\begin{equation*}
    \|\nabla^2 \zeta(t)\|_{C^0} \leq e^{\|\nabla v\|_{C_0} t} \|\nabla^2 v\|_{C_0} \|\nabla \zeta\|_{C_0}^2,
\end{equation*}
with $p_1 \gg 1$.
\end{proof}

\begin{corollary}
\label{cor:xi_prime_TV}
If $b'$ is the vector field associated with $X'_t = \zeta_t \circ X_t$, then
\begin{equation*}
    \|b'_t - b_t\|_{C_1} \leq \mathcal O(1) 2^{2p_0-p_1}.
\end{equation*}
\end{corollary}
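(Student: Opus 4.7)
The plan is to apply directly the composition formula \eqref{Equa:comp_ff} to the pair $Y_1 = \zeta_t$ (autonomous, with vector field $v$) and $Y_2 = X_t$ (with vector field $b$), so that $Y_3 = \zeta_t \circ X_t = X'_t$. This yields
\begin{equation*}
b'(t,x) = v(x) + \nabla \zeta_t\bigl(\zeta_t^{-1}(x)\bigr)\, b\bigl(t,\zeta_t^{-1}(x)\bigr).
\end{equation*}
Subtracting $b(t,x)$ and telescoping with $b(t,\zeta_t^{-1}(x))$, one obtains the decomposition
\begin{equation*}
b'(t,x) - b(t,x) = v(x) + \bigl(\nabla \zeta_t(\zeta_t^{-1}(x)) - I\bigr)\, b(t,\zeta_t^{-1}(x)) + \bigl(b(t,\zeta_t^{-1}(x)) - b(t,x)\bigr),
\end{equation*}
which isolates the three sources of discrepancy: the direct contribution of the perturbing field $v$, the Jacobian correction $\nabla \zeta_t - I$, and the displacement $\zeta_t^{-1} - \Id$ acting on $b$.

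Next I would estimate each term in the relevant $C^1$-type norm using Lemma \ref{Lem:zeta_map_X} together with the smallness assumption \eqref{Equa:kappa_def}, which via the flow-to-field relation $b_t = \dot X_t \circ X_t^{-1}$ gives $\|b\|_{C^0}, \|\nabla b\|_{C^0} = \mathcal{O}(\wp) = \mathcal{O}(1)$. The first term is trivially controlled by $\|v\|_{C^0} + \|\nabla v\|_{C^0} \leq \mathcal{O}(2^{p_0 - p_1})$. The third term is a Lipschitz correction: $\|b \circ \zeta_t^{-1} - b\|_{C^0} \leq \|\nabla b\|_{C^0}\,\|\zeta_t - \Id\|_{C^0} \leq \mathcal{O}(2^{-p_1})$, and differentiating once produces at most a factor $\|\nabla \zeta_t^{-1}\|_{C^0} = \mathcal{O}(1)$ plus lower-order contributions of $\nabla^2 b$ and $\nabla^2 \zeta_t^{-1}$, all dominated by $\mathcal{O}(2^{p_0 - p_1})$.

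The dominant term is the Jacobian correction: its $C^0$ norm is bounded by $\|\nabla \zeta_t - I\|_{C^0}\,\|b\|_{C^0} \leq \mathcal{O}(2^{p_0 - p_1})$, but when differentiating in $x$ the worst contribution is $\|\nabla^2 \zeta_t\|_{C^0}\,\|b\|_{C^0}\,\|\nabla \zeta_t^{-1}\|_{C^0}$, which by Lemma \ref{Lem:zeta_map_X} is precisely $\mathcal{O}(2^{2p_0 - p_1})$. Combining the three estimates, the overall bound is of order $\mathcal{O}(2^{2p_0 - p_1})$, matching the statement.

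No genuine obstacle is expected: the proof is essentially bookkeeping based on the composition identity \eqref{Equa:comp_ff} and the carefully scaled estimates of Lemma \ref{Lem:zeta_map_X}. The only point deserving attention is ensuring that the inverse $\zeta_t^{-1}$ enjoys the same scalings of Lemma \ref{Lem:zeta_map_X} (so that $\|\nabla \zeta_t^{-1}\|_{C^0} = \mathcal O(1)$ and $\|\nabla^2 \zeta_t^{-1}\|_{C^0} \leq \mathcal O(2^{2p_0 - p_1})$), which follows from the bound $\|\nabla \zeta_t - I\|_{C^0} \leq \mathcal O(2^{p_0-p_1}) \ll 1$ and the standard formulas for the derivatives of the inverse map.
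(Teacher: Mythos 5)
Your proposal is correct and follows essentially the same route as the paper: apply the composition formula \eqref{Equa:comp_ff} with $Y_1=\zeta_t$, $Y_2=X_t$, and then invoke the scaled estimates of Lemma \ref{Lem:zeta_map_X}, with the dominant contribution $\|\nabla^2\zeta_t\|_{C^0}\|b\|_{C^0}=\mathcal O(2^{2p_0-p_1})$ identified exactly as in the paper's bound $\|v\|_{C^1}+\mathcal O(1)\|\zeta_t-\Id\|_{C^2}\|b\|_{C^1}$. Your extra telescoping term $b\circ\zeta_t^{-1}-b$ is in fact a more careful reading of \eqref{Equa:comp_ff} than the paper's displayed identity (which writes $b(t,x)$ in place of $b(t,\zeta_t^{-1}(x))$), and it is harmless since it is of lower order.
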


\begin{proof}
From the formula \eqref{Equa:comp_ff}
\begin{equation*}
    b'(t,x) - b(t,x) = v(x) + \big( \nabla \zeta_t(t,\zeta_t^{-1}(x)) - \Id \big) b(t,x),
\end{equation*}
where $v(x)$ is the time independent vector field associated with $\zeta_t$. Hence from the previous lemma
\begin{equation*}
\begin{split}
    \|b'(t) - b(t)\|_{C^1} &\leq \|v\|_{C^1} + \mathcal O(1) \| \zeta_t - \Id\|_{C^2} \|b\|_{C^1} \\
    &\leq \mathcal O(1) 2^{2p_0-p_1}. \qedhere
\end{split}
\end{equation*}
\end{proof}

Define the \textbf{perturbed flow}  $t\rightarrow \tilde{X}_t$ as  
	\begin{equation*}
	\tilde{X}_t(x)=\begin{cases}
	X'(t,0,\xi_t(x)) & t\in\left[0,\frac{1}{2}\right], \\ X'(t,1,\eta(t,1,w(x))) & t\in\left[\frac{1}{2},1\right],
	\end{cases}
	\end{equation*}
	where $\xi_t$ and $\eta_t$ are given by formula \eqref{Equa>pertur_form} of Lemma \ref{lem: dyadic} (here since the map $\zeta$ is not needed we rescale $\xi_t,\eta_t$ with $t \in [0,1/2]$) and
	\begin{equation*}
	w(x)=\eta_1\circ T'\circ\xi_{\frac{1}{2}}(x), \quad T' = \zeta_1 \circ T.
	\end{equation*}
Call $\tilde{b}_t$ the vector field associated with $\tilde{X}_t$.

\begin{lemma}[BV estimates]
\label{BV:estim:pert:flow}
	There exists a positive constant $C = C(\wp)$ such that
	\begin{equation}
	\label{Equa:tilde_b_b}
	\|b-\tilde{b}\|_{L^\infty({L^1})} \leq \frac{C \wp}{N},\quad \|\TV({\tilde{b}})\|_{\infty}\leq C \|\TV(b)\|_\infty + \frac{C\wp}{N}. 
	\end{equation}
\end{lemma}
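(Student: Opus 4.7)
The plan is to compute $\tilde b$ explicitly via formula \eqref{Equa:comp_ff} on each subinterval $[0,1/2]$ and $[1/2,1]$, and then bound the resulting correction terms. The two auxiliary facts to exploit are Corollary \ref{cor:xi_prime_TV}, which lets me absorb the cost of $\zeta_t$ by taking the parameter $p_1$ of Lemma \ref{lem: dyadic} large enough that $\|b' - b\|_{C^1} \leq C 2^{2p_0 - p_1} \leq C\wp/N$, and sharp $L^\infty$ bounds on the vector fields $v_\xi, v_\eta$ generating $\xi_t, \eta_t$.

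On $t \in [0,1/2]$, writing $\tilde X_t = X'_t \circ \xi_t$, formula \eqref{Equa:comp_ff} gives
\begin{equation*}
\tilde b(t,x) - b'(t,x) = \nabla X'_t((X'_t)^{-1}(x)) \, v_\xi(t, (X'_t)^{-1}(x)),
\end{equation*}
and an analogous identity holds on $[1/2,1]$ with $v_\eta$ in place of $v_\xi$ (and $X'$ run backward from $t=1$). Since $X'_t$ is measure preserving and $\|\nabla X'_t\|_\infty = 1 + \mathcal O(\wp)$, the $L^1$ contribution of the correction is bounded by $C(\wp)(\|v_\xi\|_{L^1} + \|v_\eta\|_{L^1})$. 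To obtain the factor $1/N$ in the first inequality of \eqref{Equa:tilde_b_b}, the key is the sharp bound $\|v_\xi\|_\infty, \|v_\eta\|_\infty \leq C\wp/N$: the horizontal displacement $f'_{j,t}(x_1,x_2) - x_1 = t(f'(z_{1,j}(x_1),x_2) - x_1)$ has vanishing $x_2$-average over the slab $H_j$ by the definition \eqref{Equa:aver_x_1} of $x_{1,j}$, and since $\partial_{x_2} f' = \mathcal O(\wp)$ (because $T'$ is $\wp$-close to $\Id$ in $C^1$) while $H_j$ has width $1/N$, the displacement itself is $\mathcal O(\wp/N)$; the measure-preserving identity \eqref{Equa:aver_pp} propagates this bound to the vertical component. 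The argument for $v_\eta$ on the vertical columns $V_i$ is symmetric, and together with Corollary \ref{cor:xi_prime_TV} this closes the first estimate.

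For the $\TV$-estimate, formula \eqref{Equa:formula_comp} yields
\begin{equation*}
\TV(\tilde b_t) \leq \TV(b'_t) + \|\nabla X'_t\|_\infty^2 \, \TV(v_\xi) + \|\nabla X'_t\|_\infty \, \|v_\xi\|_\infty \, \TV(\nabla X'_t),
\end{equation*}
where $\|\nabla X'_t\|_\infty = 1 + \mathcal O(\wp)$ is the source of the multiplicative constant $C(\wp)$, $\TV(b'_t) \leq \TV(b_t) + C 2^{2p_0-p_1}$ comes from Corollary \ref{cor:xi_prime_TV}, and the last mixed term is $\mathcal O(\wp^2/N)$ from the previous step combined with $\TV(\nabla X'_t) = \mathcal O(\wp)$. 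The analogous inequality applies on $[1/2,1]$.

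The main obstacle is therefore the bound on $\TV(v_\xi)$ (and $\TV(v_\eta)$). Inside each slab $H_j$, the gradient of $v_\xi$ is $\mathcal O(\wp)$ (obtained by dividing the $\mathcal O(\wp/N)$-oscillation by the $x_2$-width $1/N$), so $\int_{H_j} |\nabla v_\xi| = \mathcal O(\wp/N)$. At the slab interfaces $\{x_2 = j/N\}$ the map $\xi_t$ is discontinuous because the average prescription changes between adjacent slabs, producing horizontal jumps of size $|f'(z_{1,j+1}(x_1),j/N) - f'(z_{1,j}(x_1),j/N)|$, which is $\mathcal O(\wp/N)$ via $|z_{1,j+1}(x_1) - z_{1,j}(x_1)| = \mathcal O(\wp/N)$ (a discrete derivative estimate using $\partial_{x_2} f' = \mathcal O(\wp)$) and $\partial_{z_1} f' = 1 + \mathcal O(\wp)$. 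The delicate point is organising the sum of the smooth interior and jump contributions over the $N$ slabs (and symmetrically for $v_\eta$ over the vertical columns) so that the excess is either genuinely of size $\wp/N$ or can be reabsorbed into the multiplicative factor $C(\wp)\TV(b)$; this accounting, together with the freedom to choose $p_1$ arbitrarily large in the $\zeta_t$-correction, delivers the claimed inequality $\|\TV(\tilde b)\|_\infty \leq C\|\TV(b)\|_\infty + C\wp/N$.
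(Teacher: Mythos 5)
Your architecture matches the paper's: reduce to $b'$ via Corollary \ref{cor:xi_prime_TV}, use \eqref{Equa:comp_ff} on each half-interval, prove $\|\dot\xi_t\|_\infty \leq C\wp/N$ from the averaging property \eqref{Equa:aver_x_1} and the Implicit Function Theorem, and then reduce the whole lemma to a bound on $\TV(v_\xi)$ (resp.\ $\TV(v_\eta)$). The $L^1$ part of your argument is correct and is essentially the paper's.

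The gap is in the $\TV(v_\xi)$ bound, which is the heart of the lemma. The estimates you actually state --- $|\nabla v_\xi| = \mathcal O(\wp)$ pointwise inside each slab, hence $\int_{H_j}|\nabla v_\xi| = \mathcal O(\wp/N)$, and jumps of size $\mathcal O(\wp/N)$ across each of the $N$ interfaces $\{x_2 = j/N\}$ --- sum over the $N$ slabs to give $\TV(v_\xi) = \mathcal O(\wp)$, a constant that neither decays in $N$ nor is proportional to $\|\TV(b)\|_\infty$. Since $\wp$ only controls $\|X_t-\Id\|_{C^3}$ and is not dominated by $\|\TV(b)\|_\infty$, an $\mathcal O(\wp)$ remainder cannot be absorbed into either term of \eqref{Equa:tilde_b_b}; the "accounting" you defer is precisely the step that needs a new idea. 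The paper's resolution is to avoid the crude $L^\infty\times\text{area}$ bound: by the Implicit Function Theorem one has the exact identities $\partial_{z_1}f' = 1/\partial_{z_2}X_2^{-1}$ and $\partial_{x_2}f' = \partial_{z_2}X_1^{-1}/\partial_{z_2}X_2^{-1}$ (formulas \eqref{4:14}--\eqref{4:15}), so that after a change of variables the quantities $\sum_i\|\partial_{z_1}f'\,\partial_{x_1}z_{1,i}-1\|_{L^1(H_i)}$ and $\sum_i\|\partial_{x_2}f'\|_{L^1(H_i)}$ are controlled by $\|\nabla(X^{-1}-\Id)\|_{L^1}\leq\int_0^1\TV(b_s)\,ds$. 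The same mechanism handles the jump part: the jump of $\dot\xi_t$ at $x_2=i/N$ equals $2\bigl(f'(z_{1,i}(x_1),i/N)-\fint f'(z_{1,i}(x_1),w)\,dw\bigr)$, whose integral in $x_1$ is bounded by $\int_{X_1(H_i)}|\partial_{z_2}(X_1)^{-1}|\,dz$, and summing over $i$ again yields $C\|\TV(b)\|_\infty$. Without these identities converting slab-by-slab $L^\infty$ bounds into a global $L^1$ bound by the total variation of $b$, the claimed inequality does not follow.
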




\begin{proof}
From Corollary \ref{cor:xi_prime_TV}, we have that (for $p_1 \gg 1$)
\begin{equation*}
    \|b-b'\|_{C^1} \leq \mathcal O(1) 2^{2p_0 - p_1} \ll \frac{\wp}{N},
\end{equation*}
so that we are left to prove \eqref{Equa:tilde_b_b} with $b'$ in place of $b$:
\begin{equation*}
	\|b'-\tilde{b}\|_{L^\infty({L^1})} \leq \frac{C \wp}{N},\quad \|\TV({\tilde{b}_t})\|_{\infty}\leq C \wp. 
	\end{equation*}
We will prove the above estimates for $t \in [0,1/2]$, i.e. only for $X'_t \circ \xi_t$, being the analysis of $X'(t,1,\eta(t,1,w(x)))$ completely analogous.

	We start by observing that there exists a constant $C>0$  such that 
	\begin{equation*}
	\|\dot{\xi_t}\|_{\infty}\leq\frac{C \wp}{N}. 
	\end{equation*} 
	Indeed, the map $\xi^i_t=\xi_t\llcorner _{H_i}$ is given by the formulas (see Step 1 of the proof of Lemma \ref{lem: dyadic})
	\begin{equation*}
	    \xi^i_{1,t}(x_1,x_2) = f'_{i,2t}(x_1,\xi^i_t(x_1,x_2)) = (1-2t) x_1 + 2t f'(z_{1,i}(x_1),\xi^i_{2,t}(x_1,x_2)),
	\end{equation*}
	\begin{equation*}
	    \int_{(i-1)2^{-p_0}}^{\xi^i_{2,t}(x_1,x_2)} \partial_{x_1} f'_{i,2t}(x_1,w) dw = x_2 - (i-1) 2^{-p_0}.
	\end{equation*}
	Since it holds by \eqref{Equa:kappa_def}
	\begin{equation}
\label{Equa:estim_f_prime_z_1i}
	    \|f' \llcorner_{H_i} - \Id\|_{C^3}, \|z_{1,i} - \Id\|_{C^3} \leq \mathcal O(\wp),
	\end{equation}
	then 
	\begin{equation*}
	   \| f'_{i,2t}(x_1,w) - x_1 \|_{C_3} \leq \mathcal O(\wp).
	\end{equation*}
	Hence the function
	\begin{equation*}
	    F(t,x_1,x_2,\xi) = \int_{(i-1)2^{-p_0}}^{\xi_2} \partial_{x_1} f'_{i,2t}(x_1,w) dw - x_2 + (i-1) 2^{-p_0}
	\end{equation*}
	satisfies
	\begin{equation*}
\begin{split}
	    \|F(t,x_1,x_2,\xi) - (\xi_2 -  x_2) \|_{C^2} &= \bigg\| \int_{(i-1)2^{-p_0}}^{\xi_2} \big( \partial_{x_1} f'_{i,2t}(x_1,w) - 1 \big) dw \bigg\|_{C_2} \leq \frac{\mathcal O(\wp)}{N}.
\end{split}
	\end{equation*}
	By the Implicit Function Theorem we deduce that
	\begin{equation*}
	    \|\xi^i_{2,t} - x_2\|_{C^2} \leq \frac{\mathcal O(\wp)}{N}, 
	\end{equation*}
and in particular
\begin{equation*}
\|\dot \xi^i_{2,t}\|_{C^0}, \|\nabla \dot \xi^i_{2,t}\|_{C^0} \leq \frac{\mathcal O(\wp)}{N}.
\end{equation*}
Similarly
\begin{equation*}
	    \|\xi^i_{1,t} - x_1\|_{C^2} \leq \frac{\mathcal O(\wp)}{N}, 
\end{equation*}
and then
\begin{equation*}
\|\dot \xi^i_{1,t}\|_{C^0}, \|\nabla \dot \xi^i_{1,t}\|_{C^0} \leq \frac{\mathcal O(\wp)}{N}.
\end{equation*}

We next estimate the total variation of the vector field
\begin{equation*}
v_t = \dot \xi_t(\xi^{-1}_t(x)).
\end{equation*}
We will use the following elementary formulas:
 \begin{equation}\label{4:14}
        \partial_{z_1}f'(z_1,x_2)=\frac{1}{\partial_{z_2} X_2^{-1} (z_1,z_2(z_1,x_2))} \end{equation}
        \begin{equation}\label{4:15}
        \partial_{x_2}f'(z_1,x_2)=\frac{\partial_{z_2}(X_1)^{-1}(z_1,z_2(z_1,x_2))}{\partial_{z_2}(X_2)^{-1}(z_1,z_2(z_1,x_2))}, \quad X_2(z_1,z_2(z_1,x_2)) = x_2,
    \end{equation}
 \begin{equation*}
        \partial_{x_1}z_1=\frac{1}{\dashint_{{(i-1)}2^{-p_0}}^{{i}2^{-p_0}} \partial_{z_1}f'({z}_{1,i}(x_1),w)dw}\comme{,}
    \end{equation*}
    from which it follows
\begin{equation*}
\begin{split}
     &\sum_i ||\partial_{z_1}f'(z_{i,1},x_2)\partial_{x_1}z_{i,1}-1||_{L^1(H_i)} + \sum_i \|\partial_{x_2} f'(z_{i,1},x_2)\|_{L^1(H_i)} \leq C||\TV(b)(K)||_\infty.
\end{split}
\end{equation*}
Indeed
\begin{equation*}
\begin{split}
    ||\partial_{z_1}f'(z_{i,1},x_2)\partial_{x_1}z_{i,1}-1||_{L^1(H_i)} & \leq ||\partial_{x_1}z_{i,1}||_\infty||\partial_{z_{1}}f'(z_{i,1},x_2)-1||_{L^1(H_i)} \\
    & \quad +||\partial_{x_1}z_{i,1}-1||_{L^1(H_i)} \\
    & \leq C||\partial_{z_1}f'(z_{i,1},x_2)-1||_{L^1(H_i)} \\
    & \quad + C \bigg\| \dashint_{(i-1)2^{-p_0}}^{i2^{-p_0}}\partial_{z_1}f'(z_{i,1},x_2) dw - 1 \bigg\|_{L^1(H_i)} \\ & \leq C||\partial_{z_1}f'(z_{i,1},x_2)-1||_{L^1(H_i)},
\end{split}
\end{equation*}
    therefore, by \eqref{4:14}, we get
    \begin{equation*}
\begin{split}
&\sum_i||\partial_{z_1}f'(z_{i,1},x_2)-1||_{L^1(H_i)} + \sum_i \|\partial_{x_2} f'(z_{i,1},x_2)\|_{L^1(H_i)} \\
& \qquad \leq C||\nabla (X^{-1}-Id)||_1 \leq C\int_0^1 \TV(b_s)(K)ds \leq C||\TV(b)(K)||_\infty.
\end{split}    
\end{equation*}

By the Implicit Function Theorem we recover the following estimate for $|\nabla \dot{\xi}|$\comme{:}
\begin{equation*}
    \begin{split}
    |\nabla\dot{\xi}| &\leq C \bigg( 
    |\partial_{x_1} f'(z_{1,i}(x_1),\xi_2)-1| + \int_{(i-1)2^{-p_0}}^{\xi_2} |\partial^2_{x_1} f'(z_{1,i}(x_1),w)| dw \\
    & \quad \qquad + \|\nabla f'(z_{1,i}(x_1),\xi_2)\|_{C^1} \big( |\dot \xi| + |\nabla \xi| + |\dot \xi||\nabla \xi| \big) \bigg) \\ 
    &\leq C 
    |\partial_{x_1} f'(z_{1,i}(x_1),\xi_2)-1| 
    +\frac{\mathcal{O}(\wp)}{N}.
\end{split}
    \end{equation*}
Hence
\begin{equation*}
    \|\nabla \dot \xi\|_1 \leq C \|\TV(b)(K)||_\infty + \frac{\mathcal{O}(\wp)}{N}.
\end{equation*}
    
For the jump part, we estimate the vector $v_t$ at the boundaries of $H_i$: from the definition
\begin{equation*}
\xi_t(x_1,(i-1)2^{-p_0}) = (1-2t) x_1 + 2t f' \big( z_{1,i}(x_1),(i-1) 2^{-p_0} \big),
\end{equation*}
\begin{equation*}
\xi_t(x_1,i2^{-p_0}) = (1-2t) x_1 + 2t f' \big( z_{1,i}(x_1),i 2^{-p_0} \big),
\end{equation*}
We consider only the second one, being the analysis of the first completely similar. Differentiating $\xi_t(x_1,i2^{-p_0})$ w.r.t. $t$ and using the definition of $z_{1,i}(x_1)$ we have
\begin{equation*}
\begin{split}
        \dot \xi_t(x_1,i2^{-p_0}) &= 2 \big( f' \big( z_{1,i}(x_1),i 2^{-p_0} \big) - x_1 \big) \\
        &= 2 \bigg( f' \big( z_{1,i}(x_1),i 2^{-p_0} \big) - \fint_{(i-1)/N}^{i/N} f' \big( z_{1,i}(x_1),w \big) dw \bigg).
\end{split}
\end{equation*}
and then
\begin{equation*}
    \begin{split}
    |\dot\xi_t(x_1,i/N)| &\leq 2 \int_{(i-1)/N}^{i/N} \big| \partial_{x_2} f'(z_{1,i}(x_1),w) \big| dw \\
    &\leq C \int_{(i-1)/N}^{i/N} \big| \partial_{z_2}(X_1)^{-1}(z_{1,i}(x_1),z_2(z_{1,i}(x_1),w)) \big| dw.
    \end{split}
\end{equation*}
%
Hence, by \eqref{Equa:estim_f_prime_z_1i} and from the definition of $z_2(z_1,x_2)$ in \eqref{4:15} we have that
\begin{equation*}
    \big\| (z_{1,i}(x_1),z_2(z_{1,i}(x_1),x_2)) - \Id \big\|_{C^1} \leq \frac{\mathcal{O}(\wp)}{N},
\end{equation*}
so that using
\begin{equation*}
    \big\{ \big( z_{1,i}(x_1),z_2(z_{1,i}(x_1),x_2) \big), (x_1,x_2) \in H_i \big\} = X_1(H_i),
\end{equation*}
we have
\begin{equation}\label{Equa:slab_jump_i}
    \begin{split}
        &\int_0^1 |\dot\xi_t(x_1,i/N)| dx_1 \\
        &\leq C \int_0^1 \int_{(i-1)/N}^{i/N} \big| \partial_{z_2}(X_1)^{-1}(z_{1,i}(x_1),z_2(z_{1,i}(x_1),x_2)) \big| dx_1dx_2 \\
        &\leq C \int_{X_1(H_i)} \big| \partial_{z_2}(X_1)^{-1}(z_1,z_2) \big| dz.
    \end{split}
\end{equation}
Again, since the change of variable which associate $t,x_1$ with its position $w_1$ on the jump line $[0,1] \times \{i/N\}$ is given by
\begin{equation*}
    t,x_1 \mapsto w_1 = (1-2t) x_1 + 2t f'(z_{1,i}(x_1),i/N),
\end{equation*}
up to a constant $1 + \mathcal O(\wp)/N$ (again by \eqref{Equa:estim_f_prime_z_1i}) the first integral in \eqref{Equa:slab_jump_i} corresponds to the jump part of $\dot \xi(x_1,i/N)$ on $[0,1] \times \{i/N\}$ when extended to $0$ outside $H_i$.

The same estimate holds for the jump of $\dot \xi_t(x_1,(i-1)2^{-p_0})$ on $[0,1] \times \{(i-1)/N$.

We conclude that
\begin{equation*}
\begin{split}
\|D^\jump v\| &\leq C \sum_i \int_{X(H_i)} \big| \partial_{z_2}(X_1)^{-1}(z_1,z_2) \big| dz \leq C \|\TV(b)\|_\infty.
\end{split}
\end{equation*}
We thus deduce
\begin{equation*}
\TV(v_t) \leq  C \|\TV(b)||_\infty + \frac{\mathcal{O}(\wp)}{N}. 
\end{equation*}

%
Collecting all estimates we have: \\
\smallskip \noindent \textbf{$L^1$ estimate.} Fix $t\in\left[0,\frac{1}{2}\right]$.
		From \eqref{Equa:comp_ff} it follows that
		\begin{equation*}
		|\tilde{b}_t(x)-b_t(x)|\leq \|\dot{\xi}_t\|_{\infty}|\nabla X'_t((X')^{-1}_t(x)| \leq \frac{C \wp}{N}.
		\end{equation*}
		
		
		
\smallskip
		
\noindent \textbf{BV estimate.}	
Again from \eqref{Equa:comp_ff}
\begin{equation*}
    \TV(\tilde b_t - b_t) \leq \TV \big( \nabla X_t(X^{-1}_t(x)) \dot{\xi}_t(\tilde X^{-1}_t(x)) \big),
\end{equation*}
so that we have to compute the total variation of
		\begin{equation*}
		\nabla X_t(X^{-1}_t(x))\dot{\xi}_t(X^{-1}_t(x)).
		\end{equation*}
By using Theorem \ref{change of variables} we have
\begin{equation*}
\begin{split}
\TV \big( \nabla X_t(X^{-1}_t)\dot{\xi}_t(\tilde X^{-1}_t) \big) &\leq \Lip(X_t) \TV \big( \nabla X_t \dot \xi_t(\xi^{-1}_t) \big) \\
&\leq \Lip(X_t) \TV(\nabla X_t) \|\dot \xi_t\|_\infty \\
& \quad + \Lip(X_t) \|\nabla X_t\|_\infty \TV(\dot \xi_t \circ \xi^{-1}_t) \\
&= \Lip(X_t) \TV(\nabla X_t) \|\dot \xi_t\|_\infty \\
& \quad + \Lip(X_t) \|\nabla X_t\|_\infty \TV(v_t).
\end{split}
\end{equation*}
The first term can be estimated by
\begin{equation*}
\TV(\nabla X_t) \|\dot \xi_t\|_\infty \leq 
\frac{\mathcal O(\wp)}{N},
\end{equation*}
while the second term is controlled by
\begin{equation*}
\|\nabla X_t\|_\infty \TV(v_t) \leq 
C \|\TV(b)||_\infty + \frac{\mathcal{O}(\wp)}{N}.
\end{equation*}
This is the statement.
\end{proof}

\begin{remark}
\label{Rem:d-dim_BV}
The above estimates can be obtained also for the $d$-dimensional case, since the maps used in that case is a composition of maps of the $2d$ case: the estimates are completely similar (but a lot more complicated).
\end{remark}

\begin{remark}
\label{Rem:const_BV_C}
We notice here that the constant $C$ in front of the total variation is larger than $1$: this fact is one of the reasons why we need to work in the $G_\delta$-set $\mathcal U$.
\end{remark}

\subsection{BV estimates for rotations}
\label{Ss:BV_rot}

In this part we address the analysis of rotations: these are needed because the map of Lemma \ref{lem: dyadic} maps affinely subrectangles into subrectangles, while we need squares translated into squares.

\noindent The approach here differs from the one of \cite[Lemma 4.5]{Shnirelman}, because the rotations used in that paper have a BV norm which is not bounded by the area (Lemma \ref{lem:rotations}) and instead it depends on the size of the squares (actually it blows up when the size of the squares goes to $0$). \\

\noindent Let $N = 2^{p_0}, p_0 \in \N,$ and $\sigma_1:K\rightarrow K$ be respectively the dimension of the grid and the map given by Lemma \ref{lem: dyadic}. 

\begin{lemma}\label{lem:rotations}
	\noindent
	 There exists $M\in \N$ and a flow $\bar{R}_t:K\rightarrow K$ invertible, measure-preserving and piecewise smooth such that the map $\sigma_1\circ \bar{R}_1$ translates each subsquare of the grid $ \N\times\N\frac{1}{M}$ into a subsquare of the same grid, i.e. it is a {permutation} of squares.
\end{lemma}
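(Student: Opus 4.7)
The plan is to take $M=mN$ for any $m\in\N$, where $N=2^{p_1}$ is the grid size produced by Lemma~\ref{lem: dyadic}, and to build $\bar R_t$ rectangle-by-rectangle as the rescaled rotation flow of Lemma~\ref{TV:rotations}. Specifically, for each dyadic rectangle $P_{ij}$ on which $\sigma_1$ is affine and diagonal, let $\chi_{ij}:P_{ij}\to K$ be the affine bijection and set
\begin{equation*}
\bar R_t(x)=\chi_{ij}^{-1}\circ r_t\circ \chi_{ij}(x)\qquad\text{for }x\in\mathring P_{ij}.
\end{equation*}
Because $r_t$ preserves $\partial K$ setwise, each $P_{ij}$ is preserved setwise by the local flow, so the pieces assemble into a piecewise smooth, invertible, measure-preserving map on $K$ (the boundary union $\bigcup_{ij}\partial P_{ij}$ is $\mathcal L^2$-negligible).

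The reason this choice works is an aspect-ratio matching. From Lemma~\ref{lem: dyadic}, $P_{ij}$ has sides $a_1\times 1/N_0$ with $a_1\in\N/N$, while $\tilde P_{ij}=\sigma_1(P_{ij})$ has sides $1/N_0\times a_1$, so $\sigma_1|_{P_{ij}}$ is a diagonal scaling with factors $(1/\lambda_{ij},\lambda_{ij})$ where $\lambda_{ij}=N_0 a_1$; in other words $\sigma_1|_{P_{ij}}$ swaps the sides of $P_{ij}$ exactly as a $\pi/2$ rotation would. A direct calculation in local coordinates shows that $\bar R_1$ sends the $\ell\times\ell$ square $[y_1,y_1+\ell]\times[y_2,y_2+\ell]\subset P_{ij}$ to the rectangle $[a_1-\lambda_{ij}(y_2+\ell),a_1-\lambda_{ij}y_2]\times[y_1/\lambda_{ij},(y_1+\ell)/\lambda_{ij}]$, and the anisotropic scaling of $\sigma_1$ then rescales this back to a square of side $\ell$.

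Specializing to $\ell=1/M$ and tracking the image coordinates globally, I would verify that $\sigma_1\circ\bar R_1$ sends each $1/M$-subsquare $\kappa\subset P_{ij}$ rigidly (i.e.\ by pure translation) onto a $1/M$-subsquare contained in $\tilde P_{ij}$. The alignment of the image to the grid $\N\times\N/M$ follows from $N_0\mid N\mid M$, from the fact that $x_{1,ij},z_{2,ij}\in\N/N$ (Lemma~\ref{lem: dyadic}), and from $\ell,y_1,y_2\in\N/M$: all factors of $\lambda_{ij}$ cancel cleanly in the composition. Being measure-preserving and mapping $1/M$-subsquares bijectively to $1/M$-subsquares, $\sigma_1\circ\bar R_1$ is therefore a permutation of subsquares, which is the desired conclusion.

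The step I expect to be most delicate is precisely this alignment check: the intermediate rectangles produced by $\bar R_1$ alone have sides $\lambda_{ij}/M$ and $1/(\lambda_{ij}M)$ and are in general \emph{not} aligned to any dyadic grid, so grid-preservation only emerges from the cancellation between the rescaled rotation and the anisotropic scaling of $\sigma_1$. This is precisely the reason the rotation must be implemented as $\chi_{ij}^{-1}\circ r_t\circ\chi_{ij}$, adapted to the aspect ratio of $P_{ij}$, rather than as a Euclidean rigid motion.
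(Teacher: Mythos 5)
Your construction reproduces the first half of the paper's argument (an anisotropic rotation $\chi^{-1}\circ r_t\circ\chi$ adapted to the aspect ratio, composed with the diagonal scaling $\sigma_1$), and your set-level alignment check is correct: with $N_0\mid N\mid M$ and the corners $x_{1,ij},z_{2,ij}\in\N/N$, the composition does carry each $1/M$-subsquare of the grid onto a $1/M$-subsquare of the grid. But there is a genuine gap in the last step. On the interior of $P_{ij}$ one has
\begin{equation*}
\nabla \bar R_1=\begin{pmatrix}0&-\lambda_{ij}\\ \lambda_{ij}^{-1}&0\end{pmatrix},
\qquad
\nabla(\sigma_1\circ\bar R_1)=\diag\big(\lambda_{ij}^{-1},\lambda_{ij}\big)\,\nabla\bar R_1=\begin{pmatrix}0&-1\\ 1&0\end{pmatrix},
\end{equation*}
so $\sigma_1\circ\bar R_1$ maps each subsquare onto a subsquare \emph{by a rotation of $\pi/2$}, not by a translation. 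Your own computed image $[1/N_0-y_2-\ell,1/N_0-y_2]\times[y_1,y_1+\ell]$ already shows this: the coordinates are swapped, not shifted. This contradicts the claim that the subsquares are moved \virgolette{rigidly (i.e.\ by pure translation)} and it is precisely the point of the lemma: the paper remarks immediately after the statement that $\nabla(\sigma_1\circ\bar R_1)$ must be the \emph{identity} in each subsquare, and the later constructions rely on it ($T(\kappa_i)=\kappa_j$ is always read as a rigid translation; the cycle refinement of Lemma \ref{lem:cycl:1}, the transposition flows, and the tracking of dyadic rectangles through the permutation in Proposition \ref{Prop:smix_dense} would all break if each application of the permutation rotated the squares by $\pi/2$). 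The missing ingredient, which is the second half of the paper's proof, is to compose with a counter-rotation by $-\pi/2$ \emph{inside each $1/M$-subsquare}, i.e.\ to define $\bar R_t\llcorner_{\kappa^h}=R_t\circ r_t^{-\pi/2}$, after which the net Jacobian is $\Id$ and the map is a genuine permutation by translations.

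A secondary, harmless difference: you rotate each maximal affine rectangle $P_{ij}$ in one piece, whereas the paper first subdivides each $1/N$-square into rectangles $R_{kl}$ of aspect ratio $l_2/l_1=\lambda$ (so that $\sigma_1$ sends each $R_{kl}$ onto its own $\pi/2$-rotation) and rotates those. For this lemma either choice works and the total-variation cost (Lemma \ref{TV:rotations}) is comparable; the paper's finer decomposition is chosen with an eye to the bookkeeping in the proof of the main approximation theorem, where rotations are triggered by the accumulated distortion of each subsquare.
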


In particular note that $\nabla (\sigma_1\circ \bar{R}_1)$ is the identity inside each subsquare $\kappa$.

\begin{proof}
	Fix $\kappa$ a subsquare of the grid $\N\times\N\frac{1}{N}$ and call $q=\sigma_1(\kappa)$ its image. Since $\sigma\llcorner\kappa$ is an affine measure-preserving map of diagonal form, then, up to translations,
	\begin{equation*}
			\sigma_1 (x)=\left(\begin{matrix}
				\lambda_1 & 0 \\ 0 &\lambda_2
			\end{matrix}\right)x,\qquad\forall x\in\kappa\comme{,}
	\end{equation*}
		where $\lambda_1,\lambda_2\in\Q_{>0}$ and $\lambda_1\lambda_2=1$. Therefore the rectangle $q$ has horizontal side of length $\frac{\lambda_1}{N}$ and vertical side of length $\frac{1}{\lambda_1N}$.
		Decompose now the square $\kappa$ into rectangles $R_{ij}$ with $i=1,\dots,\frac{1}{l_1}$ and $j=1,\dots,\frac{1}{l_2}$  with horizontal side of length $\frac{l_1}{N}$ and vertical side of length $\frac{l_2}{N}$. The numbers $\frac{1}{l_1},\frac{1}{l_2}\in\N$ are chosen such that $\lambda_1=\frac{l_2}{l_1}$, i.e. $\sigma_1(R_{ij})=R^{\perp}_{ij}$, where $R^{\perp}_{ij}$ is the rotated rectangle counterclockwise of an angle $\frac{\pi}{2}$. \\
		\noindent
		In each $R_{ij}$ we perform a rotation given by the flow 
		\begin{equation*}
			R^{ij}_t=\chi^{-1}\circ r_t\circ \chi,
		\end{equation*}
	where $\chi:R_{ij}\rightarrow K$ is the affine map, up to translation, sending  each $R_{ij}$ into the unit square $K$, namely
	\begin{equation*}
		\chi x=\left(\begin{matrix}
			\frac{N}{l_1} & 0 \\
			0 & \frac{N}{l_2}
		\end{matrix}\right)x, \quad\forall x\in R_{ij},
	\end{equation*}
	whereas $r_t:K\rightarrow K$ is the rotation flow \eqref{rot:flow:square}. Finally define $R_t:K\rightarrow K$ such that $R_t\llcorner_{R_{ij}}=R^{ij}_t$. This flow rotates the interior of each rectangle $R_{ij}$ by $\pi/2$ during the time evolution. \\
	\noindent
	
    Now we choose $M\in\N$ large enough to refine the grid $\N\times\N\frac{1}{N}$ in such a way that for every subsquare $\kappa\in \N\times\N\frac{1}{N}$, $\forall i,j$, each rectangle $R_{ij}\subset \kappa$ is the union of squares of the grid $\N\times\N\frac{1}{M}$ and each rectangle $q=\sigma_1(\kappa)$ is union of subsquares of $\N\times\N \frac{1}{M}$, which is possible since the vertices of the squares and rectangles we are considering are all rationals. \\ 
    \noindent
    We claim that the map $\sigma_1\circ R_t:K\rightarrow K$ is a flow of invertible, measure-preserving maps such that $\sigma_1\circ R_1:K\rightarrow K$ is a permutation of subsquares of size $\frac{1}{M}$ up to a rotation of $\pi/2$. 
    Indeed, fix $R_{ij}$ and assume that it contains $a_{ij}$ subsquares $\kappa^h_{ij}$ of size $\frac{1}{M}\times\frac{1}{M}$ along the horizontal side and $b_{ij}$ subsquares along the vertical one. The rotation $R_1$ stretches each square $\kappa_{ij}^h$  into a rectangle $r^{h}_{ij}$ whose size is $\frac{l_1}{Ml_2}\times\frac{l_2}{Ml_1}$, i.e. $\frac{1}{\lambda_1 M}\times\frac{\lambda_1}{M}$. Now it is clear that $\sigma_1(r^h_{ij})$ is a square of size $\frac{1}{M}\times\frac{1}{M}$.

The Jacobian of $R_1$ is 
\begin{equation*}
J R_1 = \left( \begin{array}{cc}
0 & -\lambda_2 \\ \lambda_1 & 0
\end{array} \right).
\end{equation*}
Thus the composition of the two maps acts in each square $r^h_{ij}$ as
\begin{equation*}
\left( \begin{array}{cc}
\lambda_1 & 0 \\ 0 & \lambda_2
\end{array} \right) \left( \begin{array}{cc}
0 & -\lambda_2 \\ \lambda_1 & 0
\end{array} \right) = \left( \begin{array}{cc}
0 & -1 \\ 1 & 0
\end{array} \right),
\end{equation*}
i.e. a rotation of $\pi/2$. 

Define then the map $\bar{R}_t$ as
\begin{equation*}
\bar{R}_t \llcorner_{\kappa^h_{ij}} = R_t \circ r^{-\pi/2}_t,
\end{equation*}
where $r^{-\pi/2}_{t}$ is the rotation of the square $\kappa^h_{ij}$ of $-\pi/2$: now the map has Jacobian $\Id$ in each square $\kappa^h_{ij}$. This is the map of the statement.
\end{proof}

{
\begin{remark}
\label{Rem:multi_d_rot}
A completely similar construction can be done in dimension $d \geq 3$: in this case, in each cube $\kappa \in \N^d/N$ the piecewise affine map $\sigma$ has the form (up to a translation)
\begin{equation*}
\sigma = \diag(\lambda_1,\dots,\lambda_d), \quad \lambda_1 \lambda_2 \dots \lambda_d = 1.
\end{equation*}
Hence the subpartition of $\kappa$ is done into parallelepipeds $\ell_1 \times \ell_2 \times \dots \times \ell_d$ such that
\begin{equation*}
\lambda_i = \frac{\ell_{i+1}}{\ell_i}.
\end{equation*}
The action of $\sigma$ transform each of these parallelepipeds into the new ones $\ell_2 \times \ell_3 \times \dots \times \ell_d \times \ell_1$, which is the range of the rotation
\begin{equation*}
\left(
\begin{array}{ccccc}
0 & -1 & 0 & \cdots & 0 \\
0 & 0 & -1 & \cdots & 0 \\
\vdots & \vdots & \vdots & \ddots & \vdots \\
1 & 0 & 0 & \dots & 0
\end{array}
\right) = \left(
\begin{array}{ccccc}
1 & 0 & \cdots & 0 & 0 \\
0 & 1 & \cdots & 0 & 0\\
\vdots & \vdots & \Id & \vdots & \vdots \\
0 & 0 & \cdots & 0 & -1 \\
0 & 0 & \dots & 1 & 0
\end{array}
\right) \cdots \dots \cdot \left(
\begin{array}{ccccc}
0 & -1 & \cdots & 0 & 0 \\
1 & 0 & \cdots & 0 & 0 \\
\vdots & \vdots & \Id & \ddots & \vdots \\
0 & 0 & \dots & 1 & 0 \\
0 & 0 & \dots & 0 & 1
\end{array}
\right).
\end{equation*}
(The above formula is the decomposition into $2d$ rotations.) Hence, as in Lemma \ref{lem:rotations} above, a rotation of the parallelpipeds $\ell_1 \times \dots \times \ell_d$ and a counter-rotation of the subcubes of the parallelepipeds gives the transformation.
\end{remark}
}

\subsection{Main approximation theorem}
\label{Ss:main_appro_th}

We are ready to prove our main result.  

\begin{theorem}
\label{Prop:piececlo}
Let $b\in L^\infty([0,1];BV(K))$ be a divergence-free vector field and assume that there exists $\delta>0$ such that for $\mathcal{L}^1$-a.e. $t\in[0,1]$, $\supp b_t \subset\subset K^\delta$. Then for every $\epsilon>0$ there exist $\delta', C_1,C_2>0$ positive constants, $D \in \N$ arbitrarily large and a divergence-free vector field $b^\epsilon\in L^\infty([0,1];BV(K))$ such that 
\begin{enumerate}
\item $\supp b^\epsilon_t\subset\subset K^{\delta'}$,
\item it holds
\begin{equation}
\label{main result}
    \|b - b^\epsilon\|_{L^\infty(L^1)} \leq \epsilon, \quad || \TV(b^\epsilon) (K)||_{\infty}\leq  C_1 ||\TV(b)(K)||_\infty + C_2,
\end{equation}
\item the map $X^\epsilon\llcorner_{t=1}$ generated by $b^\epsilon$ at time $t=1$ translates each subsquare of the grid $\N \times \N \frac{1}{D}$ into a subsquare of the same grid, i.e. it is a permutation of squares.
\end{enumerate}
\end{theorem}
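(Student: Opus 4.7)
The plan is to combine the approximation machinery developed in Sections \ref{Ss>pert_flow}--\ref{Ss:BV_rot} via a time-partition argument, interspersing Shnirelman-type affine approximations (Lemma \ref{lem: dyadic}, Lemma \ref{BV:estim:pert:flow}) with compensating rotations (Lemma \ref{lem:rotations}, Lemma \ref{TV:rotations}).

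First I would regularize: by mollification in space-time, one replaces $b$ by a smooth divergence-free $b^s$ with $\supp b^s_t \subset\subset K^{\delta''}$ for some $0<\delta''<\delta$, $\|b-b^s\|_{L^\infty L^1} < \epsilon/3$, and $\|\TV(b^s)\|_\infty \leq \|\TV(b)\|_\infty + 1$. Since $b^s$ is smooth, its flow $Y_t$ is a $C^3$ isotopy of measure-preserving diffeomorphisms equal to the identity near $\partial K$. Choose $n$ large and a partition $0=t_0<t_1<\dots<t_n=1$ so that on each subinterval $[t_i,t_{i+1}]$ the flow $Y(t,t_i,\cdot)$ satisfies the smallness assumption $\|Y(t,t_i,\cdot)-\Id\|_{C^3}\leq\wp\ll1$ required by Lemma \ref{lem: dyadic}, with $\wp$ compatible with the constants in Lemma \ref{BV:estim:pert:flow}.

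Next, on each subinterval I would construct the perturbation inductively. Assume that at time $t_i$ the approximating flow $\tilde X_{t_i}$ is a translation-permutation of subsquares on a common fine grid $\mathbb{N}\times\mathbb{N}\frac{1}{D_i}$ (this is the inductive hypothesis; at $t_0$ take $\tilde X_0 = \Id$). Apply Lemma \ref{lem: dyadic} to $T_i := Y(t_{i+1},t_i,\cdot)$ to obtain a path $\sigma^i_t$ whose endpoint $\sigma^i_1$ maps a refined dyadic grid affinely by diagonal matrices onto dyadic rectangles. Then by Lemma \ref{lem:rotations} there is a refinement $\mathbb{N}\times\mathbb{N}\frac{1}{D_{i+1}}$ and a rotation flow $\bar R^i_t$ such that $\sigma^i_1 \circ \bar R^i_1$ is a permutation of squares of the refined grid. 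Concatenating in time yields the next piece of $\tilde X_t$, and the BV estimates of Lemma \ref{BV:estim:pert:flow} together with Lemma \ref{TV:rotations} give, on the interval $[t_i,t_{i+1}]$,
\begin{equation*}
\|b^\epsilon_t - b^s_t\|_{L^1} \leq \frac{C\wp}{D_{i+1}},\qquad \TV(b^\epsilon_t)(K) \leq C\,\TV(b^s_t)(K) + C\cdot\mathcal{L}^2(\mathrm{rect.\ being\ rotated}) + \mathcal O(1)\,\epsilon_i,
\end{equation*}
where the rotation term is additive and bounded \emph{by area} rather than by grid size, thanks to Lemma \ref{TV:rotations}.

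The heart of the argument — and the main obstacle — is the choice of when to rotate in order to keep $\|\TV(b^\epsilon)\|_\infty$ under the bound $C_1\|\TV(b)\|_\infty + C_2$. A naive "rotate at every $t_i$" produces a factor $n$ in the BV norm. The correct scheme, sketched in Figure \ref{Fig:2-dim_case_rot_intro}, is \emph{calibrated rectangle by rectangle}: within each subrectangle $R$ of the current piecewise-affine decomposition, one does not rotate at every step but only when the accumulated aspect-ratio distortion of $R$ reaches a fixed threshold (say $2$). When this happens one invokes Lemma \ref{lem:rotations} on a sufficiently fine subgrid of $R$ to restore $R$ to a permutation of squares, paying a BV cost proportional to $\mathcal L^2(R)$ by Lemma \ref{TV:rotations}; summed over $R$ this is $\mathcal O(1)$. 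The crucial point is that the threshold-crossings in $R$ are charged against the logarithm of the product of singular values of the Jacobian of $Y_t\llcorner_R$, which is controlled by $\int_{t_i}^{t_{i+1}} \TV(b^s_t)(R)\,dt$. Summing over $R$ and $i$ gives a total BV cost $\leq C_1\|\TV(b)\|_\infty$, while the single unavoidable final counter-rotation (needed to make the very last map a permutation, not merely piecewise affine) contributes the additive $C_2$. The common grid $D = D_n$ is chosen at the outset large enough to contain, as a refinement, all the rotation subgrids generated by this threshold procedure — this is possible because the aspect ratios appearing are rational and uniformly bounded, so finitely many refinements suffice. The smallness of the $L^\infty L^1$ distance is then enforced by picking $D$ large enough to absorb the $C\wp/D$ errors coming from each subinterval.
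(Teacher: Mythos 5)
Your proposal follows essentially the same route as the paper's proof: mollification, a time partition with Lemma \ref{lem: dyadic} and Lemma \ref{BV:estim:pert:flow} on each subinterval, a common refined rational grid fixed in advance, and — crucially — rotations performed rectangle by rectangle only when the accumulated diagonal distortion crosses a fixed threshold, so that each rotation's area-proportional cost (Lemma \ref{TV:rotations}) is charged against $\int \TV(b_s)(\kappa_0)\,ds$ via the paper's inequality \eqref{form:princ}, with the final counter-rotation on the remaining squares producing the additive constant. This matches the paper's Steps 1--4, so the approach is correct and not genuinely different.
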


\begin{remark}
\label{Rem:delta_0}
Observe that the theorem can be easily extended to vector fields $b \in L^\infty([0,1],\BV(\R^2))$ such that $\supp b_t \subset K$. We keep here the original setting of \cite{Shnirelman}.
\end{remark}

\begin{remark}
By inspection of the proof one can check that $C_1,C_2$ are independent of $b$. This is in any case not needed for the proof of the main theorem.
\end{remark}

\begin{remark}
\label{Rem:shnirel_blows}
A possible approach would be to divide the time interval $[0,1]$ into sufficiently small time steps  $\sim\tau$ in  order to apply  Lemmas \ref{lem: dyadic} , \ref{lem:rotations} and hence to compose the resulting maps as done in \cite{Shnirelman}, however by Lemma \ref{TV:rotations},
\begin{equation*}
    ||\TV(\bar{R})(\kappa)||_\infty\sim \frac{\text{Area}(K)}{\tau}
\end{equation*}
so that the total variation blows up as the time step goes to zero.
\end{remark}
\begin{proof}
We divide the proof into several steps. \\

\noindent {\it Step 1.} Let $\rho \in C^\infty_c(\R^2)$ be a mollifier, and define
\begin{equation*}
    b^{\alpha}_{t}\doteq b_t \ast \rho_\alpha,
\end{equation*}
where $\rho_\alpha(x)\doteq \alpha^{-2} \rho(\frac{x}{\alpha})$ and $\alpha << 1$ is chosen such that $\supp b_{\alpha,t}\subset\subset K$.


By well known estimates (see \eqref{est:translat}) we obtain
%
    \begin{equation*}
        \|b^\alpha_t-b_t\|_{L^1}\leq \alpha\TV(b_t)(K),
\quad         \TV(b^{\alpha}_{t})(K)\leq \TV(b_t)(K).
    \end{equation*}
   then if $\alpha$ is chosen such that $\alpha\leq \frac{\epsilon}{2||\TV(b)(K)||_\infty}$, we conclude that 
   \begin{equation*}
       ||b^\alpha_t -b_t||_1\leq \frac{\epsilon}{2}, \quad \TV(b^{\alpha}_{t})(K)\leq \TV(b_t)(K)
   \end{equation*} and we have to prove the theorem for $b^\alpha$. Moreover $b^\alpha$ satisfies the estimates 
   \begin{equation*}
       ||b^\alpha_t||_\infty\leq \frac{1}{\alpha^2}||b_t||_1, \quad \|\nabla^n b^\alpha_t\|_\infty \leq \frac{C_n}{\alpha^{1+n}} \|\TV(b_t)(K)\|_\infty.
   \end{equation*}
  From now on we will call $b^\alpha=b$ to avoid cumbersome notations. \\
  
%
%
%

  \noindent {\it Step 2.}  Let us consider a partition of the time interval $0=t_0<t_1<\dots<t_n=1$ where $n\in\N$ and $t_i=\frac{i}{n}$, where $n$ will be chosen later on. Let us call $X_{j}\doteq X(t_{j},t_{j-1},x)$ and $X_j(t)\doteq X(t,t_{j-1},x)$ defined for $t\in [t_{j-1},t_j]$. Then each flow $X_j(t)$ is close to the identity with its derivatives, indeed
  \begin{equation}
  \label{Equa:ODE_X}
  X_j(t,x)=x+\int_{t_{j-1}}^{t} b(s,X(s,t_{j-1},x))ds,
  \end{equation} 
  so that
  \begin{equation*}
      \|X_j(t) - \Id\|_{C^k} \leq C(k) (t-t_{j-1}) \|b\|_{C^k}.
  \end{equation*}
  More precisely, if $\wp$ is the constant of \eqref{Equa:kappa_def}, there exists $n\in\N$ such that
  \begin{equation*}
        \|X_j(t) - \Id\|_{C^3}, \|X_j^{-1} - \Id\|_{C^3} \leq \wp \\, \quad \forall t \in [t_{j-1},t_j], \quad\forall j=1,\dots,n.
    \end{equation*}
    Therefore we can apply Lemma \ref{lem: dyadic} to each $X_j(t)$ finding $N_j=2^{p_j}$ dyadic and $\tilde{X}_j:[t_{j-1},t_j]\times K\rightarrow K$ with the property that, at time $t=t_j$, the map $\tilde{X}_j(t_j)$ sends subsquares of the grid $\N\times\N \frac{1}{N_j}$ into rational rectangles with vertices in $\frac{\N}{N_j R_j}$. In particular, the eigenvalues of all affine maps $\sigma$ for $\tilde X_j(t_j)$ belongs to $\frac{\N}{R_j}$. We can moreover assume that $N_j = N$ for all maps $\tilde X_j(t_j)$ by taking $N$ sufficiently large. Finally from Lemma \ref{BV:estim:pert:flow} we have that  in each interval $[t_{j-1},t_j]$ it holds
    \begin{equation*}
	\|b-\tilde{b}_j\|_{L^\infty({L^1})} \leq \frac{C \wp}{N},\quad \|\TV({\tilde{b}_j})\|_{\infty}\leq C \|\TV(b_t)\|_\infty + \frac{C\wp}{N}. 
	\end{equation*}
so that for $N \gg 1$ we have
\begin{equation*}
    \|b - \tilde b_j\|_{L^\infty(L^1)} \leq \epsilon, \quad || \TV(\tilde b_j) (K)||_{\infty}\leq  C ||\TV(b)(K)||_\infty + \epsilon,
\end{equation*}
where $\tilde b_j$ is the vector field associated with $\tilde X_j$.

We define $t\rightarrow \tilde{X}_t$ the \emph{perturbed flow} 
    \begin{equation*}
        \tilde{X}(t) = \begin{cases}
        \tilde{X}_1(t) & 0\leq t\leq t_1, \\
        \tilde{X}_2(t) \circ \tilde{X}_1(t_1) & t_1\leq t\leq t_2, \\
        \dots \\
        \tilde{X}_{i+1}(t)\circ \tilde{X}_i(t_i)\circ\dots\circ \tilde{X}_1(t_1) & t_i\leq t\leq t_{i+1}, \\
        \dots \\
        \tilde{X}_{n}(t)\circ \tilde{X}_{n-1}(t_{n-1})\circ\dots\circ \tilde{X}_1(t_1) & t_{n-1}\leq t\leq 1.
        \end{cases}
    \end{equation*}
%
%
%
%
The map is clearly piecewise affine, and the eigenvalues of each affine piece $\sigma$ belong to $\frac{\N}{\prod_j R_j}$. \\

\noindent{\it Step 3.} The map $\tilde{X}(1)$ has the property of sending subsquares of the grid $\N\times\N\frac{1}{N}$ into union of rational rectangles. Let $D = N (\prod_j R_j)^2$: we now show that $\tilde X(1)$ maps subsquares of the grid $\N \times \N \frac{1}{D}$ into rational rectangles.

Let $R = \prod_j R_j$ and assume that the map $\tilde X(1,t_{j+1})$ maps the subsquares of the grid
$$
\N \times \N \frac{1}{N R \prod^n_{k = j+1} R_k}
$$
into rational rectangles. Since $\tilde X_j$ maps affinely the subsquares of $\N \times \N \frac{1}{N}$ into rectangles of the grid
$$
\N \times \N \frac{1}{N R_j} \subset \N \times \N \frac{1}{N R \prod^n_{k = j+1} R_k},
$$
then
\begin{equation*}
    \tilde X^{-1}_j \bigg( \N \times \N \frac{1}{N R \prod^n_{k=j+1} R_k} \bigg) \subset \N \times \N \frac{1}{N R \prod^n_{k=j} R_k}.
\end{equation*}
In particular we obtain that
\begin{equation*}
    \tilde X^{-1} \bigg( \N \times \N \frac{1}{N R} \bigg) \subset \N \times \N \frac{1}{N R^2}.
\end{equation*}

    

    We rename the flow $\tilde X^D_t$ to indicate the size of the grid on which it acts as a piecewise affine map. Note that the above estimates (as well as the next ones) improve whenever $N$ becomes larger, so that the size of the grid $D$ can be taken arbitrarily large.
    \begin{figure*}
        \centering
        \includegraphics[scale=0.6]{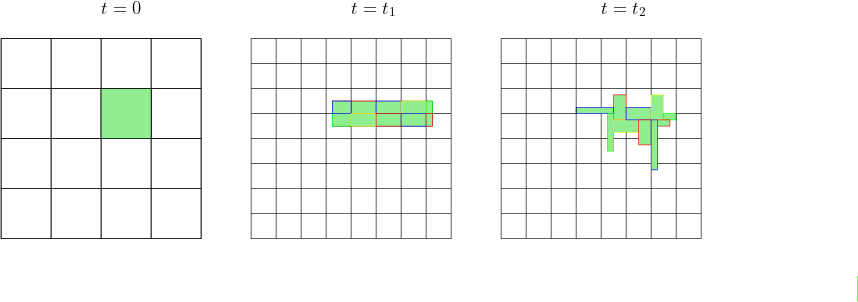}
        \caption{A square is sent into a union of rectangles by the map $\tilde{X}(1)$.}
    \end{figure*}


\smallskip

\noindent {\it Step 4.}  To conclude the proof we want to modify the flow $\tilde X^D_t$ slightly in such a way that the new flow $\check{X}^D_t$ evaluated at $t=1$ sends subsquares into subsquares by translations. The key idea is to perform rotations as in Lemma \ref{lem:rotations} balancing two effects: one one hand the cost of a rotation is at least of the order of the area (Lemma \ref{TV:rotations}), on the other hand if the squares are too much deformed the cost is exponentially large w.r.t. the total variation used to deform the square. The idea will be to use these rotations only when the deformation reaches a critical threshold.  \\

\noindent
First let us fix $\kappa_0$ a subsquare of the grid $\N\times\N\frac{1}{D}$ and call $\kappa_i$ its images through the maps $\kappa_i=\tilde {X}^D_{ t=t_i}(\kappa_0)$. Since each map $\tilde{X}_i(t_i)$ is affine and measure-preserving on $\kappa_{i-1}$, up to a translation it can be represented as
\begin{equation*}
	\left(\begin{matrix}
		\sigma_i & 0 \\
		0 & \frac{1}{\sigma_i}
	\end{matrix}\right)
\end{equation*}
where $\sigma_i\in\Q$ and $ |\sigma_{i}-1|,|\frac{1}{\sigma_{i}}-1|\leq \wp \ll 1$, where $\wp$ is the one given by the partition. 
Moreover, being
\begin{equation}
\label{equa5.36}
\nabla^2 \tilde X^D_{t=t_j}(x) = 0
\end{equation}
whenever $x$ belongs to the interior of the subsquares, we deduce that for the same $x$
\begin{equation*}
    |\nabla^2 \tilde X^D_t(x)| \leq \wp.
\end{equation*}
We can also observe that, by \eqref{Equa:ODE_X} 
\begin{equation*}
|\sigma_i - 1|, \bigg| \frac{1}{\sigma_i} - 1 \bigg| \leq \frac{C}{\mathcal L^2(\kappa_0)} \int_{t_{i-1}}^{t_i} \TV(b^D_s)(\kappa_0) ds.
\end{equation*}
By elementary computations one can prove that 
\begin{equation*}
\bigg|\prod_i \sigma_i - 1 \bigg|, \bigg| \prod_i \frac{1}{\sigma_i} - 1 \bigg| \leq \max \bigg\{ \sigma, \frac{1}{\sigma} \bigg\} \sum_i |\sigma_i -1|.
\end{equation*}
Hence if we have the bound 
\begin{equation*}
	3 \leq \left( \sigma_{j+1} \cdot \ldots \cdot \sigma_{i} + \frac{1}{ \sigma_{j+1} \cdot \ldots \cdot \sigma_{i}}\right)\leq 4,
\end{equation*}
then
\begin{equation}
	\label{form:princ}
	\frac{3}{8} \mathcal{L}^2(\kappa_j)\leq \int_{t_j}^{t_i} \TV(b^D_s)(\kappa_0) ds.
\end{equation}

\noindent    
The idea is to find now a new sequence of times $\lbrace t_{i_j}\rbrace\subset\lbrace t_{i}\rbrace$, $j=1,\dots,n'$ and $i=1,\dots,n$, in which we perform the rotations of Lemma \ref{TV:rotations} in order to have both the total variation is controlled be the total variation of $\tilde b^D$ and the property of sending {subsquares} into {subsquares} by translation. 

Let us start defining $t_{i_0}=0$ and
\begin{equation*}
	t_{i_1}= \min T_0,
\end{equation*}
where
$$
T_0=\left\lbrace t_{i}>0: 3\leq\left (\sigma_{1} \cdot \dots \cdot \sigma_{i} + \frac{1}{\sigma_{1} \cdot \dots \cdot \sigma_{i}}\right)\leq 4\right\rbrace.
$$
Then two situations may occur.
\begin{enumerate}
	\item The set $T_0$ is empty, that is $ \left(\sigma_{1}\dots\sigma_{i}+ \frac{1}{\sigma_{1}\dots\sigma_{i}}\right)\leq 3$ for all $i$. In this case we perform a rotation in $[0,1]$, that is $R^1:[0,1]\times K\rightarrow K$ (as in Lemma \ref{lem:rotations}) where $R^1\llcorner_{K\setminus\mathring{\kappa}_0}(x)=x$ and it is such that $X^D\circ R^1\llcorner_{t=1}$ sends subsquares of $\kappa_0$ into subsquares of $\kappa_n$. In this case 
	\begin{equation*}
		\hat{X}^D_t = \tilde X^D_t\circ R^1_t
	\end{equation*}
    \begin{equation*}
    	\hat{b}^D_t(x) = \tilde b^D_t(x) + \nabla \tilde X^D_t \circ (\tilde X^D_t)^{-1}(x) \dot R^1_t((\tilde X^D_t \circ R^1_t)^{-1}(x) ) \quad x \in \kappa_0
    \end{equation*}
    (where we have recalled that all functions can be extended smoothly to $\kappa_0$) and
	\begin{equation}
		\label{est:case:1}
	\begin{split}
	\TV(\hat{b}^D_t - \tilde b^D_t)(\kappa_0) &\leq \|\nabla \tilde X^D_t\|_\infty^2 TV(\dot{R}_t(R^{-1}_t))(\kappa_0) \\
	& \quad + \|\nabla \tilde X^D_t\|_\infty \|\dot R\|_\infty \TV(\nabla \tilde X^D_t)(\kappa_0) \\
	&\leq
	\|\nabla \tilde X^D_t\|_\infty \frac{4}{D^2}\left(\sigma_{1}\dots\sigma_{n}+\frac{1}{\sigma_{1}\dots\sigma_{n}}\right) + \frac{\mathcal O(\wp)}{D^3} \\& \leq
	\|\nabla \tilde X^D_t\|_\infty \frac{12}{D^2} + \frac{\mathcal O(\wp)}{D^3}.
	\end{split}
	\end{equation}
We have observed that
\begin{equation*}
    \nabla^2 \tilde X^D_{t=t_j}(x) = 0 \quad \text{for $x \in \mathring{\kappa}_0$ by (\ref{equa5.36}),}
\end{equation*}
so that for $t \in [t_{j-1},t_j]$
\begin{equation*}
    \|\nabla^2 \tilde X^D_t\|_\infty \leq \mathcal O(1) \int_{t_{j-1}}^t |\nabla^2 b(s,\tilde X^D_s)| ds = \frac{O(\wp)}{D^2}. 
\end{equation*}

Since $\|\nabla \tilde X^D_t\|_\infty \leq C$ by the assumptions that these are sets with small deformations, we obtain
\begin{equation*}
\TV(\hat b^D_t - \tilde b^D_t)(\kappa_0) \leq \frac{\mathcal O(1)}{D^2} = \mathcal O(1) \mathcal L^2(\kappa_0),
\end{equation*}
where we have used \eqref{form:princ}.

    \item The set $T_0$ is non empty. Then if $t_{i_1}=1$ the procedure stops and you perform a rotation as in Lemma \ref{lem:rotations} in $[0,1]$ finding 
	\begin{equation*}
	\begin{split}
	\TV(\hat{b}^D_t - \tilde b^D_t)(\kappa_0) &\leq  \|\nabla \tilde X^D_t\|\frac{4}{D^2}\left(\sigma_{1}\dots\sigma_{n}+\frac{1}{\sigma_{1}\dots\sigma_{n}}\right) + \frac{\mathcal O(\wp)}{D^3}\\
	& \leq
	\mathcal O(1) \|\nabla \tilde X^D_t\|\int_{0}^1\TV(\tilde b^D_s)({\kappa}_0)ds + \frac{\mathcal O(\wp)}{D^3},
    \end{split}
    \end{equation*}
    where we have used (\ref{form:princ}) and we have estimated the higher order term as in \eqref{est:case:1}. 
	\\
	
	\noindent
	If $t_{i_1}<1$ we compute
	\begin{equation*}
		t_{i_2}= \min T_1,
	\end{equation*}
	where
	$$
	T_1=\left\lbrace t_{i}>t_{i_1}: 3\leq\left(\sigma_{i_1+1}\dots\sigma_{i}+ \frac{1}{\sigma_{i_1+1}\dots\sigma_{i}}\right)\leq 4\right\rbrace.
	$$
	If $T_1=\emptyset$ we stop the procedure and we perform a rotation in $[t_{i_1-1},1] = [0,1]$ finding, for $t\in [t_{i_1-1},1]$ 
	\begin{equation}
\label{est:case:3}
\begin{split}			\TV(\hat{b}^D_t- \tilde b^D_t)(\kappa_0)&\leq \|\nabla \tilde X^D_t\|\frac{4}{D^2}\frac{1}{1-t_{i_0}}\left(\sigma_{i_1-1}\dots\sigma_{n}+\frac{1}{\sigma_{i_0}\dots\sigma_{n}}\right)\\
& \quad + \frac{\mathcal O(\wp)}{D^3} \\
& \leq  \mathcal{O}(1) 
\frac{1}{1-t_{i_0}}\int_{t_{i_0}}^{t_{i_1}}\TV(\tilde b^D_s)({\kappa}_0)ds + \frac{\mathcal O(\wp)}{D^3} \\ 
		&\leq  \mathcal O(1) \dashint_{t_{i_0}}^{1}\TV(\tilde b^D_s)({\kappa}_0)ds + \frac{\mathcal O(\wp)}{D^3}.
		\end{split}
	\end{equation}
	
	\noindent
	If $T_1$ is non empty we perform a rotation in $[0,t_{i_1}]$ finding for $t\in[0,t_{i_1}]$ the following estimate
	\begin{equation*}
		\begin{split}
		\TV(\hat{b}^D_t - \tilde b^D_t)(\kappa_0) &\leq \|\nabla \tilde X^D\|\frac{4}{D^2}\frac{1}{t_{i_1}-1}\left(\sigma_{1}\dots\sigma_{i_1}+\frac{1}{\sigma_{1}\dots\sigma_{i_1}}\right) + \frac{\mathcal O(\wp)}{D^3 t_{i_1}}\\ 
		&\leq 
		\mathcal{O}(1) \dashint_{0}^{t_{i_1}}\TV(\tilde b^D_s)({\kappa}_0)ds + \frac{\mathcal O(n \wp)}{D^3}.
		\end{split}
	\end{equation*}
	
	Again if $t_{i_2}=1$ the procedure stops and we perform another rotation in $[t_{i_1},t_{i_2}]$. If not we consider the set $T_2$ and we proceed.
\end{enumerate}
The general step if the following: we consider the set
$$
T_{j+1} = \left\lbrace t_{i}>t_{i_j}: 3\leq\left (\sigma_{1} \cdot \dots \cdot \sigma_{i} + \frac{1}{\sigma_{1} \cdot \dots \cdot \sigma_{i}}\right)\leq 4\right\rbrace.
$$
If $T_{j+1}$ is empty, then we perform a rotation in $[t_{i_{j-1}},1]$ finding for $t \in [t_{i_{j-1}},1]$ the same estimate as (\ref{est:case:3}). If $T_{j+1}$ is non empty, we perform a rotation in $[t_{i_{j-1}},t_{i_j}]$ and we consider 
$$
T_{j+2} = \left\lbrace t_{i}>t_{i_{j+1}}: 3\leq\left (\sigma_{1} \cdot \dots \cdot \sigma_{i} + \frac{1}{\sigma_{1} \cdot \dots \cdot \sigma_{i}}\right)\leq 4\right\rbrace.
$$
\\

\noindent
At the end of this procedure there are two possible scenarios: let $n'=\sup \lbrace j: T_j\not=\emptyset\rbrace$. If $t_{i_{n'}}=1$, the procedure ends by performing a rotation in $[t_{i_{n'-1}},1]$. If we find $t_{i_{n'}}<1$ with the property that
$$
\left(\sigma_{i_{n'}+1}\dots\sigma_{i}+ \frac{1}{\sigma_{i_{n'}+1}\dots\sigma_{i}}\right)\leq 3
$$
for all $i=i_{n'}+1,\dots,n$, the construction ends with a rotation in $[t_{i_{n'-1}},1]$ (as in the case of the estimate (\ref{est:case:3})). \\


\noindent
In particular, for each subsquare $\kappa_0$ we find a sequence of times $\lbrace t_{i_j}(\kappa_0)\rbrace$, $j=1,\dots,n'(\kappa_0)$, where we are performing a rotation. 
   %
    There are two cases to be considered: if $T_0(\kappa_0)$ is empty then 
    \begin{equation*}
       \TV(\hat{b}^D_t-\tilde b^D_t)(\kappa_0) \leq \frac{\mathcal O(1)}{D^2},
    \end{equation*}
    otherwise
    \begin{equation*}
       \TV(\hat{b}^D_t- \tilde b^D_t)(\kappa_0) \leq \mathcal O(1) \| \TV(\tilde b^D)({\kappa}_0)\|_\infty + \frac{\mathcal O(n\wp)}{D^3}.
    \end{equation*}
    
    \noindent
    Summing over all possible $\kappa_0$ we find that
    \begin{equation*}
       \begin{split}
       \TV(\hat{b}^D_s)(K)
       &\leq \TV(\tilde b^D_s)(K) + D^2 \bigg( \frac{\mathcal O(n\wp)}{D^3} + \frac{\mathcal O(1)}{D^2} \bigg) + C_2 \| \TV(\tilde b^D)(K)\|_\infty \\
       &\leq \TV(\tilde b^D_s)(K) + C_1 + C_2 \| \TV(\tilde b^D)(K)\|_\infty
       \end{split}
    \end{equation*}
    if $D \gg 1$,
    therefore we can conclude the proof finding a positive constant $C>0$ such that
    \begin{equation*}
       \|\TV(\hat{b}^D)(K)\|_\infty\leq C_1 + C_2 \| \TV(\tilde b^D)(K)\|_\infty
    \end{equation*}
    which is the desired estimate.
%
%
\end{proof}

\begin{remark}
The same result can be obtained for the $d$-dimensional case, by using the maps of Section \ref{Sss:d-dim} and Remarks \ref{Rem:d-dim_BV} and \ref{Rem:multi_d_rot}.
\end{remark}
\section{Appendix}
\label{S:appendix}

\begin{proof}[Proof of Lemma \ref{map:to:flow}] 

{
By the Ergodic Theorem, $T = X_{t=1}$ is ergodic iff 
\begin{equation*}
    \frac{1}{n} \sum_{i=0}^{n-1} \chi_{T^i(A)} \to_{L^1} |A|
\end{equation*}
In particular, if $T$ is ergodic, then by writing
\begin{equation*}
    \frac{1}{n} \int_0^n \chi_{X_t(A)} dt = \int_0^1 \frac{n-1}{n} \bigg( \frac{1}{n-1} \sum_{i=0}^{n-1} \chi_{T^i(X_s(A))} \bigg) ds
\end{equation*}
we see that
\begin{equation*}
    \dashint_0^t \chi_{X_s(A)} ds \to_{L^1} |A|.
\end{equation*}
It is immediate to find a counterexample to the converse implication: just consider rotation of the unit circle with period $1$.

The proof of the implication $\Rightarrow$ in the second point is analogous. For the converse, let $A,B \in \Sigma$ such that
\begin{equation*}
    \frac{1}{n} \sum_{i=0}^n \big[ |T^i(A) \cap B| - |A||B| \big]^2 > \epsilon.
\end{equation*}
By the continuity of $s \mapsto X_s$ in the neighborhood topology we have that there exists $\bar s$ such that for $0 \leq s \leq \bar s$ it holds
\begin{equation*}
    \big| X_s(B) \triangle B \big| = \big| B \triangle (X_s)^{-1}(B) \big| < \frac{\epsilon}{2}.
\end{equation*}
Hence we can write
\begin{equation*}
    \begin{split}
        \dashint_0^{n} \big[ |X_t(A) \cap B| - |A| |B| \big]^2 dt &\geq \frac{n}{T} \int_0^{\bar s} \frac{1}{n} \sum_{i=0}^{n-1} \big[ \big| X_s(T^i(A)) \cap B \big| - |A \cap B| \big]^2 ds \\
        &= \int_0^{\bar s} \frac{1}{n} \sum_{i=0}^{n-1} \big[ \big| T^i(A) \cap (X_s)^{-1}(B) \big| - |A \cap B| \big]^2 ds \\
        &\geq \int_0^{\bar s} \frac{1}{n} \sum_{i=0}^{n-1} \big[ \big| T^i(A) \cap B \big| - |A \cap B| \big]^2 ds - \bar s \frac{\epsilon}{2} > \bar s \frac{\epsilon}{2}
    \end{split} 
\end{equation*}
for $n \gg 1$. Hence
\begin{equation*}
    \liminf \dashint_0^{T} \big[ |X_t(A) \cap B| - |A| |B| \big]^2 dt \not= 0.
\end{equation*}

Finally, if $T$ is strongly mixing, the continuity of $s \mapsto X_s$ in the neighborhood topology gives that $s \mapsto X_s^n = X_s \circ T^n$ is a family of equicontinuous functions, and since for all $s$ fixed
\begin{equation*}
\lim_{n \to \infty} |X_s(T^n(A)) \cap B| = |A| |B|    
\end{equation*}
we conclude that $X_s^n$ converges to $0$ uniformly in $s$. The opposite implication is trivial.
}
\end{proof}

\printbibliography[title={Bibliography}]
\end{document}